\documentclass[10pt]{article}
\usepackage{amsmath}
\usepackage{amsfonts, amssymb, amsthm}
\usepackage{epsfig}
\usepackage{color}
\usepackage{multicol}

\newlength{\hchng}
\setlength{\hchng}{0.55in}
\newlength{\vchng}
\setlength{\vchng}{0.55in}
\addtolength{\oddsidemargin}{-\hchng}
\addtolength{\textwidth}{2\hchng}
\addtolength{\topmargin}{-\vchng}
\addtolength{\textheight}{2\vchng}

\newtheorem{thm}{Theorem}[section]
\newtheorem{prop}[thm]{Proposition}
\newtheorem{cor}[thm]{Corollary}

\newtheorem{lemma}[thm]{Lemma}

\newtheorem{preremark}[thm]{Remark}
\newenvironment{remark}{\begin{preremark}\rm}{\medskip \end{preremark}}
\numberwithin{equation}{section}

\newcommand{\norm}[1]{\left\Vert#1\right\Vert}

\newcommand{\R}{\mathbb R}
\newcommand{\eps}{\varepsilon}

\newcommand{\grad} {\nabla}

\newcommand{\bdary} {\partial}

\newcommand{\dd} {\; \mathrm{d}}

\DeclareMathOperator*{\osc}{osc}

\DeclareMathOperator{\tr}{Tr}

\newcommand{\Q}{Q}


\def\ds{\displaystyle}

\title{H\"older continuity to Hamilton-Jacobi equations with superquadratic growth in the gradient and unbounded right-hand side}
\author{Pierre Cardaliaguet\thanks{CEREMADE (UMR CNRS no. 7534), Universit\'e Paris-Dauphine; e-mail: cardaliaguet@ceremade.dauphine.fr} and Luis Silvestre\thanks{University of Chicago; email:luis@math.uchicago.edu}}

\begin{document}
\maketitle

\begin{abstract} We show that solutions of time-dependent degenerate parabolic equations with super-quadratic growth in the gradient variable and possibly unbounded right-hand side are locally ${\mathcal C}^{0,\alpha}$. Unlike the existing (and more involved) proofs for equations with bounded right-hand side, our arguments rely on constructions of sub- and supersolutions combined with improvement of oscillation techniques.  
\end{abstract}

%
%

\section{Introduction}

We study the regularization effect of Hamilton-Jacobi equations with rough coefficients
\begin{equation}\label{eq:HJ}
u_t + H(x,t,D u,D^2 u) = 0.
\end{equation}
Our aim is to show that, if the growth of $H$ with respect to the gradient variable is sufficiently large, then bounded solutions will be H\"{o}lder continuous, with a H\"older modulus of continuity independent of the regularity of $H$ with respect to $(x,t)$. The regularization effect is based only on the strong coercivity assumption of $H$ respect to $Du$, and not on the second order diffusion, which can be degenerate or even nonexistent.

We prove three results. The first one is the simplest case in which $H$ does not depend on $D^2 u$ (there is no diffusion).
\begin{thm} \label{t:intro1}
Let $u$ be a bounded viscosity solution of \eqref{eq:HJ} in $B_1 \times [0,1]$ for a function $H=H(x,t,Du)$ that does not depend on $D^2u$. Assume that for all $x$ and $t$, \[\frac 1A |Du|^p - A \leq H(x,t,Du) \leq A|Du|^p + A,\] for some constants $A$ and $p>1$. Then $u$ is $C^\alpha$ in $B_{1/2} \times [1/2,1]$, where $\alpha$ and the $C^\alpha$ norm of $u$ depend on $p$, $A$ and $\norm{u}_{L^\infty}$ only.
\end{thm}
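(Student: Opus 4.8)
The plan is to prove Hölder continuity by an improvement‑of‑oscillation iteration built on explicit sub‑ and supersolutions, rather than through a representation formula. The first observation, used throughout, is that the two‑sided bound on $H$ produces two one‑sided comparisons: since $u_t+H(x,t,Du)=0$ and $\tfrac1A|Du|^p-A\le H\le A|Du|^p+A$, the function $u$ is a viscosity subsolution of
\[
w_t+\tfrac1A|Dw|^p=A
\]
and a viscosity supersolution of
\[
w_t+A|Dw|^p+A=0 .
\]
Both auxiliary equations have a smooth, convex, coercive Hamiltonian that does not depend on $(x,t)$; for such equations the comparison principle against smooth barriers and against Hopf–Lax cones is elementary, and that is the only comparison theory required. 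All constants below are allowed to depend only on $p$, $A$ and $\norm{u}_{L^\infty}$.

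Second, I would reduce the theorem to one quantitative step. Working with cylinders $Q_r=B_r\times(-r^\beta,0]$ for an aspect ratio $\beta=\beta(p)$ to be fixed, and with the rescaling $\tilde u(x,t)=\omega^{-1}u(rx,r^\beta t)$, where $\omega=\osc_{Q_r}u$, a direct computation shows that, \emph{because $p>1$}, one can choose $\beta$ so that this map sends the pair ``subsolution of $w_t+\tfrac1A|Dw|^p=A$ and supersolution of $w_t+A|Dw|^p+A=0$'' back into (essentially) the same class — with coercivity no worse and forcing no larger — provided the base radius is taken small. Here the finiteness of the conjugate exponent $p'=p/(p-1)$ is what makes the two scaling conditions compatible. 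It then suffices to produce $\theta\in(0,1)$ and $\rho\in(0,1)$, depending only on $p$ and $A$, such that any admissible $u$ with $\osc_{Q_1}u\le1$ satisfies $\osc_{Q_\rho}u\le1-\theta$ on a smaller, time‑advanced copy $Q_\rho\subset Q_1$; iterating this and undoing the scaling yields $u\in C^{0,\alpha}$ with $\alpha=\log(1-\theta)/\log\rho$ and a bound of the stated form.

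For the one‑step improvement, normalize $0\le u\le1$ on $Q_1$ and run a dichotomy. If $u$ takes a value $\le\tfrac12$ at some point of a suitable inner‑lower subregion of $Q_1$, I would use the subsolution property: comparing $u$ with a Hopf–Lax cone of $w_t+\tfrac1A|Dw|^p=A$ issued from that point — a supersolution of the form $(x,t)\mapsto u(x_0,t_0)+A(t-t_0)+c_{A,p}|x-x_0|^{p'}(t-t_0)^{1-p'}$ — propagates the small value forward and outward in time and gives $\sup_{Q_\rho}u\le1-\theta$. In the complementary case, $u\ge\tfrac12$ on that inner‑lower region (and $u\ge0$ everywhere), and I would compare $u$ from below with an explicit smooth subsolution $\phi$ of $w_t+A|Dw|^p+A=0$, i.e.\ a function with $\phi_t\le-A-A|D\phi|^p$, shaped roughly like $\gamma\,(1-|x|^2/\sigma^2)_+-\Lambda(t-\underline{t})$ so that $\phi\le u$ on the parabolic boundary of a sub‑cylinder while $\phi\ge\theta$ near its top and centre; this yields $\inf_{Q_\rho}u\ge\theta$. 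In either alternative $\osc_{Q_\rho}u\le1-\theta$, which is what we need.

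The main obstacle is that the two constructions pull the geometry of the cylinders in opposite directions and must be made to coexist under one choice of parameters. The subsolution $\phi$ in the second case satisfies $\phi_t\le-A-A|D\phi|^p$, so forcing $\phi$ to transition over a short spatial scale makes the drift $\Lambda$ large and the usable time interval short; the forward propagation in the first case, on the contrary, needs the time interval to be at least of parabolic size relative to the spatial scale it must cover. These requirements meet precisely at the borderline aspect ratio, and the point of superlinearity ($p>1$) is exactly that it permits a simultaneous choice of $\beta$, of the radius of the inner ball and of the lateral transition layer, of the height of the cylinders, and of $\theta$ and $\rho$ — all positive constants depending only on $p$ and $A$ — for which both halves of the dichotomy close and the rescaling preserves the structure. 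Making all of these constants fit together is the heart of the argument.
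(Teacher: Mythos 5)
Your overall architecture matches the paper's: normalize via a H\"older rescaling (using $p>1$ to make the zeroth-order term shrink), establish a two-case improvement of oscillation at unit scale, and iterate via a standard lemma (Proposition~\ref{p:scaleiteration} in the paper). The first half of your dichotomy — propagating a small value at a bottom point upward via a Hopf--Lax cone, a supersolution of $w_t+\frac1A|Dw|^p=\eps$ — is essentially the argument in Lemma~\ref{l:diminish-of-oscillation}; the paper phrases it through the Lax--Oleinik formula, whose ``$\min$'' trivially sits below any single cone, so the lateral-boundary verification you'd need for a single cone is sidestepped there.

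Your second half, however, takes a genuinely different route. The paper uses the Lax--Oleinik formula again, now from the supersolution inequality: it lower-bounds $u$ by $\min$ over the parabolic boundary, estimating the bottom contribution by $2\theta-\eps T$ and the lateral contribution by $c_pT^{1-p'}A^{1-p'}$, making the first-order case strikingly short. You instead build an explicit travelling bump subsolution $\phi\approx\gamma(1-|x|^2/\sigma^2)_+-\Lambda t$ and compare $u$ with it from below. This is exactly the device the paper reserves for the second-order case (Lemma~\ref{l:subsolution}), where Lax--Oleinik is unavailable; it works for first order too, at the cost of a slightly heavier construction. What it buys is uniformity: your proof of Theorem~\ref{t:intro1} would import almost verbatim into Theorem~\ref{t:intro2}, whereas the paper's section~\ref{sec:order1} proof does not.

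Two points need tightening. First, as written you keep the forcing $A$ in the bump inequality $\phi_t\le -A-A|D\phi|^p$, which forces $\Lambda\ge A$ and would kill the improvement unless $T$ is taken unusably small; after the $C^\alpha$ rescaling the zeroth-order term should be a small $\eps$, and then $\Lambda\ge\eps+A(2\gamma/\sigma)^p$ can be made $\ll\gamma/T$ by taking $\gamma$ small — that is where $p>1$ enters, since $\gamma^p\ll\gamma$. Second, you flag the parameter fitting as ``the heart of the argument'' but do not carry it out. It does close, and without circularity: fix $T$ first (bounded above so either the single cone dominates $u$ on the lateral boundary, or, in the Lax--Oleinik form, so the lateral contribution is $\ge 2\theta$); then choose the inner radius $\sigma$ small enough that $c_pA^{p'-1}\sigma^{p'}T^{1-p'}<\frac14$, so the cone from any $(y_0,-T)$ with $y_0\in B_\sigma$ gives $u\le \frac12+\frac14+\eps T$ on $Q_\rho$; then choose $\gamma$ small enough that $A(2\gamma/\sigma)^pT\le\gamma/8$, set $\theta=\gamma/4$, and finally $\eps$ small enough that $\eps T\le\gamma/8$. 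A complete proof along your lines would need this (or some equivalent) cascade spelled out.
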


This result was already proved in \cite{ca08} and \cite{CC} using representation formulas for Hamilton-Jacobi equations and the reverse H\"older inequality. The proof that we present here is shorter and more elementary. Our new proof does not rely on the path representation of the Hamilton-Jacobi equation. Instead, an improvement of oscillation at all scales is obtained by comparing the solution to explicit super and subsolutions given by the classical formula of Lax and Oleinik. 

Using the same idea, we can also address the case of Hamilton-Jacobi equations with diffusion. The proof is slightly longer because we cannot use the formula of Lax and Oleinik and instead we need to construct explicit super and subsolutions.

In order to handle a large class of diffusions, possibly degenerate, we define the extremal operators $m^\pm(X)$ for any symmetric matrix $X$. Let $m^+(X)$ be the maximum positive eigenvalue of $X$ (zero if none) and $m^-(X)$ be the minimum negative eigenvalue of $X$ (zero if none).

\begin{thm} \label{t:intro2}
Let $u$ be a bounded viscosity solution of \eqref{eq:HJ} in $B_1 \times [0,1]$. Assume that for all $x$ and $t$, \[\frac 1A |Du|^p - Am^+(D^2u) - A \leq H(x,t,Du,D^2 u) \leq A|Du|^p - Am^-(D^2u) + A,\] for some constants $A$ and $p>2$. Then $u$ is $C^\alpha$ in $B_{1/2} \times [1/2,1]$, where $\alpha$ and the $C^\alpha$ norm of $u$ depend on $p$, $A$ and $\norm{u}_{L^\infty}$ only.
\end{thm}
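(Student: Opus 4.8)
The goal is to prove a single quantitative \emph{improvement of oscillation at the unit scale} and then iterate it by scaling. Concretely, I would show that there are $\lambda\in(0,1)$ and $\theta\in(0,1)$, depending only on $p$ and $A$ (and a priori the dimension), so that for every solution $u$ with the stated structure, $\osc_{B_1\times(-1,0]}u\le 1$ implies $\osc_{B_\lambda\times(-\lambda^{\beta},0]}u\le 1-\theta$, for the exponent $\beta$ of the first-order scaling of the problem (so that $|x|^{p'}/t^{p'-1}$ is scale invariant, i.e.\ $\beta=p$). Applying this on the dyadic cylinders $B_{\lambda^k}\times(-\lambda^{\beta k},0]$ about any interior point — the class of inequalities below being stable under the corresponding rescaling with the constant $A$ not increasing — gives a geometric decay of the oscillation and hence $u\in C^{0,\alpha}$ with $\alpha$ and the norm of the stated form. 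Throughout one uses the two one-sided consequences of the equation: $u$ is a viscosity subsolution of $u_t+\tfrac1A|Du|^p-Am^+(D^2u)-A=0$ and a viscosity supersolution of $u_t+A|Du|^p-Am^-(D^2u)+A=0$.

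\textbf{The barriers.} Write $p'=p/(p-1)$, so that $p'\in(1,2)$ because $p>2$. The heart of the matter is an explicit family of comparison functions: a mollified Lax--Oleinik cone together with a linear-in-time correction,
\[ \psi(x,t)=M+\lambda(t-t_0)+K\,\frac{\big(\eta^2+|x-x_0|^2\big)^{p'/2}}{(t-t_0+h)^{p'-1}},\qquad t>t_0, \]
with parameters $M\in\R$ and $\lambda,K,\eta,h>0$ to be chosen, together with its reflection $-\psi$. I would check that, choosing first $K$ large (in terms of $A,p$) and then $\lambda$ large (in terms of $A,p,K,\eta,h$), the function $\psi$ is a classical supersolution of $u_t+\tfrac1A|Du|^p-Am^+(D^2u)-A=0$ on any cylinder $B_\rho(x_0)\times(t_0,t_0+\sigma]$, while $-\psi$ is a subsolution of $u_t+A|Du|^p-Am^-(D^2u)+A=0$. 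The verification rests on two points. First, the identity $(p'-1)p=p'$ makes $\tfrac1A|D\psi|^p$ and the negative contribution of $\psi_t$ homogeneous of the same order in $(t-t_0+h)$, so that for $K$ large the gradient term dominates them away from $x=x_0$. Second — and this is where $p'<2$ is used — the mollified cone has a bounded Hessian, $m^+\!\big(D^2(\eta^2+|\cdot|^2)^{p'/2}\big)\le C(p)\eta^{\,p'-2}$; since in $\psi$ this is divided only by $(t-t_0+h)^{p'-1}$ and not by $(t-t_0+h)^{p'}$, it is of lower order and is absorbed, together with the constant $-A$, into the linear slope $\lambda$. Exactly at the tip $x=x_0$, where the gradient term vanishes, it is precisely the term $\lambda(t-t_0)$ that keeps the supersolution inequality valid.

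\textbf{Improvement of oscillation.} Normalising so that $0\le u\le1$ on $Q_1=B_1\times(-1,0]$, I would argue by dichotomy on an intermediate time slice: either $u(\cdot,s)\le \tfrac12$ on a definite portion of $B_{1/2}$, or $u(\cdot,s)\ge\tfrac12$ there. In the first situation I would compare $u$ from above, on a suitable subcylinder, with a barrier $\psi$ as above: the parameters are tuned so that $\psi\le\tfrac12$ on the bottom slice (where $u\le\tfrac12$) while the spatial growth of $\psi$ makes $\psi\ge1\ge u$ on the lateral boundary, which is feasible exactly when the radius and time length of the subcylinder are in the ratio $\mathrm{radius}^{\,p}\sim\mathrm{time}$; comparison then forces $u\le\psi\le1-\theta$ on a smaller inner cylinder touching $\{t=0\}$. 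In the second situation one argues symmetrically with $-\psi$ as a subsolution of the lower equation, using $u\ge0$ on the lateral boundary and $u\ge\tfrac12$ on the bottom portion, to get $u\ge\theta$ on a smaller inner cylinder. In either case the oscillation decreases by the fixed amount $\theta$, and rescaling and iterating completes the proof. The gain $\theta$ can be kept uniform in $A$ (only the geometric parameters degenerate as $A$ grows), which is what makes the iteration close.

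\textbf{Main obstacle.} The key difficulty — and the reason the Lax--Oleinik formula does not suffice here — is that the pure Lax--Oleinik cone $|x-x_0|^{p'}/(t-t_0)^{p'-1}$ is an \emph{exact} solution of the first-order equation and therefore becomes a strict \emph{sub}solution as soon as the diffusion term $-Am^+(D^2u)\le0$ is added, so it is useless as an upper barrier. Repairing this forces the three modifications above — mollifying the tip (so the Hessian stays bounded, which is possible only for $p'<2$, i.e.\ $p>2$), shifting the singularity by $h$, and adding the linear slope $\lambda(t-t_0)$ — and the crux of the proof is to choose $\eta,h,\lambda,K$ simultaneously in terms of $A$, $p$ (and the dimension) so that the supersolution inequality genuinely holds everywhere, including at the tip where only the linear term and the bounded diffusion term remain, while the barrier still dominates the only available information $u\le1$ on the lateral boundary at the admissible aspect ratio. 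Carrying this balancing act out, and checking that the resulting oscillation gain is scale invariant enough to iterate, is the part I expect to be genuinely delicate.
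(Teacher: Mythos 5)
Your overall strategy (explicit barriers, improvement of oscillation, iteration via scaling) is the same as the paper's, and your observation that $p>2$ is what keeps the mollified cone's Hessian under control is the right reason the argument closes. However, there are two genuine gaps.

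\paragraph{The reflection $-\psi$ is not a subsolution of the lower equation.} You claim that if $\psi$ is a supersolution of $u_t+\tfrac1A|Du|^p-Am^+(D^2u)-A=0$, then $-\psi$ is a subsolution of $u_t+A|Du|^p-Am^-(D^2u)+A=0$. Writing out what the latter requires,
\[
(-\psi)_t + A|D(-\psi)|^p - Am^-(D^2(-\psi)) + A \;=\; -\psi_t + A|D\psi|^p + Am^+(D^2\psi) + A \;\le\; 0,
\]
i.e.\ $\psi_t \ge A|D\psi|^p + Am^+(D^2\psi) + A$. For your $\psi$, away from the tip $\psi_t$ is dominated by the negative term $-K(p'-1)(\eta^2+|x-x_0|^2)^{p'/2}/(t-t_0+h)^{p'}$, which is of the same size as $|D\psi|^p$ (this balance is precisely what makes $\psi$ a supersolution of the \emph{other} equation). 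So $\psi_t$ cannot dominate $A|D\psi|^p$; the sign of the gradient term is wrong for reflection. Being a supersolution of the ``easy'' equation (coefficient $1/A$ on the gradient) in no way produces a subsolution of the ``hard'' one (coefficient $A$). The paper's lower-half barrier is a completely different object: a smooth, decreasing-in-time bump $L(x,t)=\theta\,b(|x|/R+t/4)-\dots$ with bounded Hessian, which is a subsolution precisely because its gradient is small (Lemma 4.6). You need an analogous flat bump, not a reflected cone.

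\paragraph{The fixed mollification does not ``see a point''.} The paper's supersolution is $U(x,t)=Ct^{-1/(p-1)}(|x|^2+t)^{p'/2}$, in which the mollification parameter is $t$ itself. Consequently $U(x,0)=+\infty$ for $x\neq 0$ and $U(0,0)=0$, so the inf-convolution $\min_y u(y,0)+U(x-y,t)$ equals $u$ exactly on the bottom slice, is $\ge 1\ge u$ on the sides, and the comparison principle propagates the value of $u$ at a \emph{single} bottom point into the interior (Corollary 4.4 and Lemma 4.5). With your fixed $\eta>0$ and time-shift $h>0$, $\psi(\cdot,t_0)$ is finite everywhere, so the inf-convolution does \emph{not} equal $u$ on the bottom and the comparison from a single small value fails. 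Your patch — a measure dichotomy on a time slice and tuning ``$\psi\le\tfrac12$ on the bottom slice (where $u\le\tfrac12$)'' — does not repair this: for the comparison you need $\psi\ge u$ on the entire parabolic boundary, and on the portion of the bottom where $u$ might be close to $1$ your $\psi$ need not dominate. The $t$-dependent mollification also obviates the linear slope $\lambda(t-t_0)$: the Hessian contribution comes with an extra factor $t/(|x|^2+t)$, which is absorbed by the positive part of $U_t$ without any auxiliary term, whereas in your barrier $\lambda$ must be taken large in terms of $\eta,h$, which then shortens the admissible cylinder height and threatens to make the gain $\theta$ degenerate.

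In short: replace the fixed-$\eta$, fixed-$h$ mollification by the $t$-dependent one so the barrier degenerates at the bottom, and replace the reflection $-\psi$ by a genuine flat subsolution bump for the lower half of the dichotomy. With those two changes your plan becomes the paper's proof.
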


Note that the assumption on $H$ allows us to apply Theorem \ref{t:intro2} to Hamilton-Jacobi equations with degenerate diffusion. In this result, the regularization effect is a consequence of the growth of $H$ respect to $Du$ only. The diffusion plays no role in the proof and in fact since $p$ is assumed to be larger than two, it can be treated as a negligible effect in small scales.

Theorem \ref{t:intro2} has already been proved in \cite{CC} and \cite{cr11} using a stochastic representation of the equation and the reverse H\"older inequality. Our proof relies on comparison principle only with explicit super and subsolutions. No techniques from probability are used.

Our third result is new and allows us to handle equations with a right hand side that is only bounded below but possibly unbounded above. In order to be able to apply this result we need stronger assumptions on the diffusion. The diffusion can still be degenerate (or even non existent), but we need it to depend on $x$ smoothly.
\begin{thm} \label{t:intro3}
Let $u$ be a bounded viscosity solution of \eqref{eq:HJ} in $B_1 \times [0,1]$. Assume that for all $x$ and $t$, 
\[\frac 1A |Du|^p - \tr(B(x,t) D^2u) - f(x,t) \leq H(x,t,Du,D^2 u) \leq A|Du|^p - Am^-(D^2u) + A,\] 
where $A\geq 1$, $p>2$, $f \in L^m$ for some $m > 1+d/p$ ($d$ being the space dimension) and $B(x,t)$ is a $d \times d$ symmetric nonnegative matrix that is $C^1$ respect to $x$.

Then $u$ is $C^\alpha$ in $B_{1/2} \times [1/2,1]$, where $\alpha$ and the $C^\alpha$ norm of $u$ depend on $p$, $A$, $\norm{u}_{L^\infty}$, $||B||_{C^1}$ and $||f||_{L^m}$ only.
\end{thm}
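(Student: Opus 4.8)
The plan is to run the same improvement-of-oscillation scheme used for Theorems \ref{t:intro1} and \ref{t:intro2}, but with two new ingredients to absorb the unbounded source term $f$. First I would localize: it suffices to show that at a point $(x_0,t_0)\in B_{1/2}\times[1/2,1]$ the oscillation of $u$ on the parabolic cylinder $Q_r=B_r(x_0)\times(t_0-r^{\sigma},t_0]$ decays like $r^{\alpha}$ for some scaling exponent $\sigma$ to be chosen. Since $p>2$, in small scales the natural parabolic scaling is the one from the Hamilton–Jacobi part: under $u(x,t)\mapsto \lambda^{-1}u(x_0+\lambda x,\,t_0+\lambda^{\sigma}t)$ the gradient term $|Du|^p$ scales like $\lambda^{(\sigma-p)}$ relative to $u_t\sim\lambda^{\sigma-1}$, so one takes $\sigma=p'=p/(p-1)<2$ to balance them; then the diffusion $\tr(BD^2u)\sim\lambda^{\sigma-2}$ is \emph{lower order} (it carries a positive power of $\lambda$), which is exactly why, as the authors remark, the diffusion is negligible at small scales. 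The source term $f$, however, only gets an $L^m$ smallness after rescaling, and this is where the hypothesis $m>1+d/p$ enters, as I explain below.

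The core step is a single improvement-of-oscillation lemma: there exist $\theta\in(0,1)$ and $\eta>0$, depending only on the data, such that if $\osc_{Q_1}u\le 1$ and $\|f\|_{L^m(Q_1)}\le\eta$, then $\osc_{Q_{\theta}}u\le 1-\theta$ (up to an additive constant one is free to subtract). To prove it one distinguishes the two cases $|\set{u>0}\cap Q_{1/2}|\ge \tfrac12|Q_{1/2}|$ and the complementary one; by symmetry treat the first. Then one must push $u$ down by a definite amount on a smaller cylinder. The mechanism is comparison with an explicit supersolution built as in the proof of Theorem \ref{t:intro2}, of the form $w(x,t)=C|x|^{\beta}\,\varphi(t)+(\text{barrier in }t)$ with $\beta=p'$, chosen so that the coercive term $\tfrac1A|Dw|^{p}$ dominates $w_t+\tr(B D^2 w)$ wherever $|x|$ is not too small. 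The presence of $f$ forces us to add a corrective term: let $g$ solve (or be a supersolution of) a linear equation $-\tr(B D^2 g)+|Dg|^{p}\lesssim f^{+}$, say $g=$ a multiple of the Newtonian-type potential of $f^{+}$, or more simply the solution of $g_t-\tr(B(x,t)D^2g)=f^+$; the $C^1$ regularity of $B$ in $x$ is used precisely to have such a $g$ with good quantitative bounds (an $L^\infty$ bound via the parabolic maximum principle / Aleksandrov–Bakelman–Pucci estimate), and the ABP-type bound on $\|g\|_{L^\infty}$ in terms of $\|f\|_{L^m}$ with $m>1+d/p$ is what makes $\|g\|_{L^\infty}$ as small as we like when $\eta$ is small. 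Then $w+g$ (plus the usual time barrier) is a supersolution of \eqref{eq:HJ} lying above $u$ on the parabolic boundary of the relevant region, and comparison yields $u\le 1-\theta$ on $Q_\theta$.

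Iterating this lemma on the dyadically rescaled cylinders $Q_{\theta^k}$ gives $\osc_{Q_{\theta^k}}u\le(1-\theta)^k$, hence $C^{\alpha}$ regularity with $\alpha=\log(1-\theta)^{-1}/\log\theta^{-1}$ times the scaling weight $\sigma$; the dependence of $\alpha$ and the norm is only on $p,A,\|u\|_{L^\infty},\|B\|_{C^1},\|f\|_{L^m}$, as claimed, because at each scale the rescaled source has $L^m$ norm shrinking by a fixed power of $\theta$ (this is where $m>1+d/p$ is needed a second time: it guarantees $\|f\|_{L^m(Q_{\theta^{k}})}$ after rescaling picks up a positive power of $\theta^k$, so the smallness hypothesis $\|f\|\le\eta$ is self-improving down the scales). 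I expect the main obstacle to be the construction and the quantitative control of the corrector $g$ together with the matching of $w+g$ against $u$ on the parabolic boundary while keeping all constants uniform under rescaling — in particular verifying that the lower-order diffusion term and the $f$-corrector do not destroy the strict supersolution inequality in the region $|x|\ge c$, and handling the viscosity-solution (non-smooth) aspects of the comparison when $g$ is merely $W^{1,m}$-type. The degenerate case $B\equiv 0$ is contained in this argument with $g$ the solution of $g_t=f^+$, i.e. $g(x,t)=\int_{t_0-1}^{t}f^+(x,s)\,\mathrm ds$, whose $L^\infty$ bound again follows from $m>1+d/p$ after using the gradient term to absorb the $x$-dependence.
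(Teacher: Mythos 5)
Your overall blueprint (iterated improvement of oscillation, explicit barriers, absorb $f$ by smallness after rescaling) is the same as the paper's, but the two concrete mechanisms you propose do not work in the setting of the theorem, and they are also not what the paper does.

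First, the dichotomy. You propose a measure alternative, $|\{u>0\}\cap Q_{1/2}|\ge\tfrac12|Q_{1/2}|$ versus its complement, which is the De Giorgi/Krylov--Safonov style split. The paper's dichotomy is pointwise: either $u$ is small at \emph{one single} point on the bottom of the cylinder, or $u$ is bounded below at \emph{every} point on the bottom. This is not a cosmetic difference. The measure dichotomy requires a measure-to-pointwise estimate (an ABP or $L^\eps$ lemma) which is not available here because $B$ may be degenerate or even zero. The paper's pointwise dichotomy is tailored to the fact that the coercive Hamiltonian ``sees points'': the explicit supersolution $C\,t^{-1/(p-1)}(|x|^2+\eta t)^{p'/2}$ of Lemma \ref{l:supersolution} vanishes at exactly one spatial point at $t=0$, which is what lets a single small value propagate.

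Second, the corrector $g$. You suggest solving $g_t-\tr(B D^2g)=f^+$ and bounding $\|g\|_{L^\infty}$ via an ABP-type estimate, then adding $g$ to the barrier. The parabolic ABP estimate needs uniform ellipticity; here $B\ge0$ can be degenerate, and in the limiting inviscid case $B\equiv0$ the equation $g_t=f^+$ yields $g(x,t)=\int f^+(x,s)\,ds$, which is not in $L^\infty$ for $f\in L^m$. You acknowledge this (``using the gradient term to absorb the $x$-dependence''), but that phrase is exactly where the entire difficulty sits, and no corrector PDE you write down uses the gradient term. The paper's actual mechanism is different: after an approximation step (Lemma \ref{l:approx}, sup-convolution followed by mollification, where the $C^1$ regularity of $B$ is used to integrate by parts and pass from $\tr(B D^2 u)$ to $\dv(B Du)$ plus controlled error), one builds the one-dimensional quantity $\rho(s)=\int\phi\,u$ along a straight segment from $(y,0)$ to $(x,t)$ with a convolution kernel whose width $\xi_\delta(s)=\delta\min\{s^\beta,(t-s)^\beta\}$ pinches at both endpoints. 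Differentiating $\rho$ and using the coercive $|Du|^p$ term to absorb the transport and the $B$-drift produces the bound
\[
u(x,t)\le u(y,0)+C|x-y|^{p'}t^{1-p'}+C\|f\|_{m}\,\delta^{-d/m}+C\bigl(\delta^{p'}+\eps^{p'}\delta^{-p'}\bigr),
\]
and the three competing exponent constraints $\beta>1/p$, $\beta<1/p'$, $\beta<(m-1)/d$ are simultaneously satisfiable precisely when $p(m-1)>d$, i.e.\ $m>1+d/p$. This is where the hypothesis on $m$ enters, not through ABP. Also note that the $C^1$ assumption on $B$ is used in the approximation lemma, not to get good bounds on a corrector.

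A smaller point: your time-scaling $\sigma=p'$ is wrong. Under the $C^\alpha$-scaling $u_r(x,t)=r^{-\alpha}u(rx,r^\beta t)$ the Hamilton--Jacobi part is preserved when $\beta=p-\alpha(p-1)$, which for small $\alpha$ is close to $p>2$, and the diffusion coefficient picks up a factor $r^{\beta-2}=r^{p-\alpha(p-1)-2}$, which decays for $\lambda<1$ precisely because $p>2$ and $\alpha$ is small. With $\sigma=p'<2$ and an amplitude factor $\lambda^{-1}$, the diffusion coefficient is $\lambda^{\sigma-2}=\lambda^{p'-2}>1$ for $\lambda<1$, i.e.\ the diffusion term grows, contrary to your claim. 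This slip does not by itself sink the strategy, but it illustrates that the bookkeeping of exponents is exactly where the constraint $m>1+d/p$ has to be threaded through, and it has to be done with the correct scalings.
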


Note that in this setting we can allow unboundedness of $H$ only  from one side. This restriction seems mandatory to ensure the boundedness of the solution.

This extension to unbounded coefficient is new, although related results, for elliptic problems, can be found in \cite{DaPo}. 
It is partially motivated by  \cite{LL07mf}, in which solutions of the so-called mean field game equations are related with some optimal control problems for Hamilton-Jacobi equations of the form
\[
u_t -\Delta u +  H(x,t,D u) = f(x,t)
\]
where $f\geq 0$ is a distributed  control which has to be bounded in some Lebesgue space. 

H\"{o}lder regularity of Hamilton-Jacobi equation with a large growth with respect to the gradient variable has been addressed in several papers: in \cite{CDLP}, \cite{Ba09} it is proved that subsolutions of stationary equations are H\"{o}lder continuous (even up to the boundary of the domain) as soon as the Hamiltonian is superquadratic. The H\"{o}lder regularity of solutions of \eqref{eq:HJ} as in Theorem \ref{t:intro1} or \ref{t:intro2} has already been obtained in \cite{cr11}. This paper, based on techniques first introduced in  \cite{CC}, \cite{ca08}, relies on (rather heavy) stochastic control representation for solutions of \eqref{eq:HJ} combined with reverse H\"{o}lder inequalities. On the contrary, the new proof we present here  uses only comparison principle with explicit functions that we can construct. There is no component of probability in the proof. 

For Hamilton-Jacobi equations with second order diffusion, if the growth with respect to the gradient variable is at most quadratic, then the second order diffusion dominates in small scales. For uniformly parabolic equations in non divergence form, the solutions are H\"older continuous. In fact, by a simple exponential change of variables (first observed in \cite{MR1048584} for elliptic equations), this statement is a simple consequence of the H\"older estimates from Krylov and Safonov for parabolic equations without gradient terms. Hamilton-Jacobi equations with fractional diffusions have also been studied in \cite{silvestre2011differentiability} and \cite{Si2} in the case where the diffusion is stronger than the effect of the gradient on the Hamiltonian. Our assumption $p>2$ in Theorems \ref{t:intro2} and \ref{t:intro3} explores the complementary condition in which the second order diffusion is weaker than the gradient terms at small scales. In our results, we treat the diffusion as a perturbation of the equation that plays no role in the regularization of the solutions. The case $1<p\leq 2$ with degenerate diffusion is unclear, and we do not know if the solutions would become regular in that case.

The proofs we present in this article are based on the iteration at all scales of an improvement of oscillation lemma. This lemma says that the oscillation of a solution $u$ in a cylinder is smaller by a fixed factor than the oscillation in a larger cylinder (also comparable by a fixed factor). The proof of this oscillation Lemma has the interesting feature that it uses the fact that the Hamilton-Jacobi equation \emph{sees points} (i.e. points have positive capacity, i.e. controlled processes can be forced to hit a given point). The idea is that if $u$ is small at one single point on the bottom of a parabolic cylinder, then the equation forces $u$ to be small everywhere inside the cylinder. In the opposite case, if $u$ is large at every point at the bottom of the cylinder, then naturally $u$ will be large everywhere inside. Both implications are proved comparing $u$ with explicit super and subsolutions.

The organization of the paper is the following. After this introduction, we have a small preliminary section where we analyze the scaling of the equation and we outline a general iterative improvement of oscillation scheme to prove H\"older continuity. Then we have three sections where we prove theorems \ref{t:intro1}, \ref{t:intro2} and \ref{t:intro3} respectively. Even though there is a significant overlap between these results, we kept the three sections independent except for one lemma used to prove Theorem \ref{t:intro2} that is also used in the proof of Theorem \ref{t:intro3}. A reader interested in Theorem \ref{t:intro1} only, needs to read sections \ref{s:prelim} and \ref{sec:order1} only (which is simple and very short). A reader interested in Theorem \ref{t:intro2}, needs to read sections \ref{s:prelim} and \ref{sec:order2} only.

\section{Preliminaries}
\label{s:prelim}

\subsection{Scaling}
\label{s:scaling}
We start this section we exploring the effect of several different scalings of the equation.

Let $u$ be a sub or super solution to
\[ u_t + A|D u|^p - \eps m^{\pm} (D^2 u) = f \]
then the function $v(x,t) = cu(ax,bt)$ is a sub(super) solution of the equation
\[ v_t + A a^{-p} b c^{1-p} |D v|^p - \eps a^{-2} b m^{\pm} (D^2 u) = bc f(ax,bt) \]

In order to prove a ${\mathcal C}^\alpha$ estimate, we need scalings that preserve the constant $A$ in front of the $|Du|^p$ and diminishes the effect of diffusion and zero order term. 

A first choice to consider is $c=1$, $b=a^p$ with $a<1$. For $a<1$, this scaling corresponds to zooming without altering the values of $v$ ($L^\infty$ scaling). If $\eps=0$ and $f=0$ (as needed for Theorem \ref{t:intro1}), the scaling preserves the equation and we do not need to check anything else. 

If $\eps>0$, we would want the scaling not to enhance the second order term. Thus we would need $p>2$, so that $a^{-2} b<1$ when $a<1$. This will be the case in the proof of Theorem \ref{t:intro2}.

When the right hand side $f$ is unbounded, we need to check the effect of scaling on its $L^m$ norm. We have
\[
|b f(a\cdot,b\cdot)|_{L^m}= a^{p(1-1/m)-d/m} \|f\|_m.
\]
Thus, one has $a^{p(1-1/m)-d/m} <1$ as soon as  $p(m-1)>d$. This is the case for Theorem \ref{t:intro3}.

Let us now look at the effect of a ${\mathcal C}^\alpha$ scaling. Let $\alpha \in (0,1)$ and $\beta = p - \alpha (p-1)$. Then the scaled function $u_r(x,t) = r^{-\alpha} u(rx,r^\beta t)$ is a sub(super) solution of
\[ \partial_t (u_r) + A |D u_r|^p - \eps r^{-\alpha (p-1) + p - 2} m^{\pm} (D^2 u) = r^{p(1-\alpha)} f(rx,r^\beta t). \]
If $\eps=0$ and $f=0$, then the equation is preserved as before. When $\eps>0$ or $f$ is unbounded, we need to choose $\alpha$ small enough so that the effect of the diffusion and the right hand side decrease as we scale in. For the diffusion part we need $r^{-\alpha (p-1) + p - 2} < 1$. Regarding the $L^m$ norm of $f$, 
\[
|r^{p(1-\alpha)} f(r\cdot,r^\beta \cdot) |_{L^m}= r^{\delta}\|f\|_m\qquad {\rm with}\; \delta = \frac{1}{m}(p(m-1)-d)+\frac{\alpha}{m}((m-1) p+1)
\]
and, if $p(m-1)>d$, one can choose again $\alpha$ small in order to diminish the effect of the right-hand side. 

\subsection{Iterative improvement of oscillation}

One of the most common methods to prove an interior H\"older regularity result is by proving an improvement of oscillation Lemma that is iterated at all scales. The improvement of oscillation Lemma says that the oscillation of a solution to a given equation in a parabolic cylinder decreases by a fixed factor (less than one) when the size of cylinder is reduced by another given factor. The iteration of such result in a decreasing sequence of cylinders produces a H\"older modulus of continuity. We work out this general iterative procedure in the following Proposition.

\begin{prop} \label{p:scaleiteration}
Let $Q_r(x,t)$ denote the cylinder $B_r(x) \times [t-r^\beta,t]$ for some constant $\beta>0$. Let $u$ be a function with oscillation $1$ in $Q_2(0,0)$ such that the following ``improvement of oscillation'' property holds:
there exists $\lambda \in (0,1)$  and $\alpha\in(0,1)$ such that for all $0<r\leq 1$ and any cylinder $Q_r(x,r)$ contained in $Q_2(0,0)$ we have the following property
\begin{equation} \label{e:improv-of-oscillation}
\text{if } \osc_{Q_r(x,t)} u \leq r^\alpha, \; \text{then}\; \osc_{Q_{\lambda r}(x,t)} u \leq (\lambda r)^\alpha.
\end{equation}
Then $u$ is  $C^\alpha(Q_{1}(0,0))$ and 
$$
|u(x,s)-u(y,t)|\leq C \left[ |x-y|^\alpha+|s-t|^{\frac{\alpha}{\beta}}\right]\qquad \forall (x,s),(y,t)\in Q_1\;,
$$
where  $C$ depends on $\lambda$  and $\alpha$ only.
\end{prop}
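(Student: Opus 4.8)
The plan is the textbook passage from an improvement-of-oscillation lemma to a H\"older estimate: iterate \eqref{e:improv-of-oscillation} along a geometric sequence of nested cylinders with a common apex, then convert the resulting discrete decay into a modulus of continuity by comparing two points inside one well-chosen cylinder of the sequence. I expect no substantial obstacle; the only point needing care is the backward-in-time geometry of the cylinders $Q_r(z,\tau)=B_r(z)\times[\tau-r^\beta,\tau]$, which forces us to anchor comparison cylinders at the \emph{later} of two times and to keep track of the inclusions that make \eqref{e:improv-of-oscillation} applicable.

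\textbf{Step 1 (decay at an apex point).} Fix $(x_0,t_0)\in Q_1(0,0)$. Since $|x_0|\le 1$ and $t_0\in[-1,0]$, one checks $Q_1(x_0,t_0)\subseteq Q_2(0,0)$ (this amounts to $2^\beta\ge 2$, which holds in all our applications, where $\beta=p-\alpha(p-1)>1$). Hence $\osc_{Q_1(x_0,t_0)}u\le\osc_{Q_2(0,0)}u=1=1^\alpha$. I then prove by induction on $k\ge 0$ that
\[
\osc_{Q_{\lambda^k}(x_0,t_0)}u\le\lambda^{\alpha k}.
\]
The case $k=0$ is the previous line. Given the bound for $k$, apply \eqref{e:improv-of-oscillation} with $r=\lambda^k\le 1$ to the cylinder $Q_{\lambda^k}(x_0,t_0)$, which is contained in $Q_2(0,0)$ because it sits inside $Q_1(x_0,t_0)$: from $\osc_{Q_{\lambda^k}(x_0,t_0)}u\le(\lambda^k)^\alpha$ we get $\osc_{Q_{\lambda^{k+1}}(x_0,t_0)}u\le(\lambda^{k+1})^\alpha$, i.e.\ the bound for $k+1$.

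\textbf{Step 2 (modulus of continuity).} Let $(x,s),(y,t)\in Q_1(0,0)$ and set $\rho:=|x-y|+|s-t|^{1/\beta}$. If $\rho\ge 1$, then trivially $|u(x,s)-u(y,t)|\le\osc_{Q_2(0,0)}u=1\le\lambda^{-\alpha}\rho^\alpha$. If $\rho<1$, choose the integer $k\ge 0$ with $\lambda^{k+1}\le\rho<\lambda^k$. Put the apex at the later time: let $\tau:=\max(s,t)$ and let $z$ be the associated spatial point, so $(z,\tau)$ is one of the two given points and $(z,\tau)\in Q_1(0,0)$. Then $|x-y|\le\rho<\lambda^k$ puts $x,y\in B_{\lambda^k}(z)$, while $|s-t|\le\rho^\beta<\lambda^{k\beta}$ together with $\min(s,t)\ge\tau-|s-t|$ puts $s,t\in[\tau-\lambda^{k\beta},\tau]$; hence both points lie in $Q_{\lambda^k}(z,\tau)$. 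Step 1 applied at the apex $(z,\tau)$ gives
\[
|u(x,s)-u(y,t)|\le\osc_{Q_{\lambda^k}(z,\tau)}u\le\lambda^{\alpha k}=\lambda^{-\alpha}(\lambda^{k+1})^\alpha\le\lambda^{-\alpha}\rho^\alpha.
\]
Since $\alpha<1$, subadditivity of $\tau\mapsto\tau^\alpha$ yields $\rho^\alpha=(|x-y|+|s-t|^{1/\beta})^\alpha\le|x-y|^\alpha+|s-t|^{\alpha/\beta}$, so the claimed estimate holds with $C=\lambda^{-\alpha}$, depending only on $\lambda$ and $\alpha$.

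\textbf{Main obstacle.} There is no real difficulty: everything reduces to summing a geometric series. The one thing to watch is the one-sided (backward-in-time) nature of the cylinders, which dictates the choice $\tau=\max(s,t)$ for the enveloping cylinder in Step 2 and requires verifying $Q_{\lambda^k}(z,\tau)\subseteq Q_2(0,0)$ at every scale so that \eqref{e:improv-of-oscillation} is legitimately invoked; the base inclusion $Q_1(x_0,t_0)\subseteq Q_2(0,0)$ is where the relation between the sizes $1$ and $2$ (and the exponent $\beta$) actually enters.
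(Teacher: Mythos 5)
Your proof is correct and follows the same approach as the paper's: iterate \eqref{e:improv-of-oscillation} along the geometric sequence of cylinders $Q_{\lambda^k}(x_0,t_0)$ anchored at a fixed apex, then convert the resulting dyadic oscillation decay to a modulus of continuity. You spell out the conversion step and the backward-in-time inclusion geometry more explicitly than the paper, which stops after establishing $\osc_{Q_{\lambda^k}(x,t)} u \le \lambda^{\alpha k}$ and treats the passage to the stated H\"older estimate as routine.
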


Typically, one proves that solutions to an equation are H\"older continuous by showing that \eqref{e:improv-of-oscillation} holds at unit scale $r=1$. Then it holds for all $r>0$ using the scale invariance of the equation. 
The proof of Proposition \ref{p:scaleiteration} is fairly standard. We include it here for completeness.

\begin{proof}
Let $(x,t) \in Q_{1}(0,0)$. In order to show that $u \in C^\alpha(Q_{1}(0,0))$, we must show that $\osc_{Q_{r}(x,t)} u \leq C r^\alpha$ for all $r \in (0,1)$. We will prove by induction with $C=1$ and the sequence of radii $r_k = \lambda^k $. Then it follows for all $r$ by using $C=\lambda^{-\alpha}$.

For $k=0$, we trivially know that $\osc_{Q_{1}(x,t)} u \leq 1$. Therefore, the desired inequality holds for $r_0$. We now proceed with the inductive process. Assume that we know that $\osc_{Q_{r_k}(x,t)} u \leq r_k^\alpha$ up to a certain $k \geq 0$. But then the hypothesis \eqref{e:improv-of-oscillation} is exactly what we need to obtain \[ \osc_{Q_{r_{k+1}}(x,t)} u = \osc_{Q_{\lambda r_k}(x,t)} u \leq (\lambda r_k)^{\alpha} = r_{k+1}^\alpha.\]
And we finish the proof.
\end{proof}

\section{First order equations}\label{sec:order1}

In this section prove Theorem \ref{t:intro1} about the H\"older regularity of first order Hamilton-Jacobi equations with superlinear growth in the gradient variable, but without dependence on $D^2 u$ and without an unbounded right hand side. 

Theorem \ref{t:intro1} is a consequence of the following more precise result.

\begin{thm}\label{t:main1}
Let $u$ be a continuous viscosity solution of the following two equations 
\begin{align}
u_t + A |D u|^p + A \geq 0  \label{e:fo1} \\
u_t + \frac 1 A |D u|^p - A\leq 0 \label{e:fo2} 
\end{align}
in $B_1\times (0,1)$, where $p>1$. Then 
$$
|u(x,t)-u(y,s)|\leq C \left[ |x-y|^\alpha+|t-s|^{\frac{\alpha}{p-\alpha(p-1)}}\right] \qquad \forall (x,t),(y,s)\in B_{1/2}\times [1/2,1]
$$
for some $\alpha \in (0, p')$ (where $1/p+1/p'=1$) and $C$ depending on $A$, $p$, $\|u\|_\infty$ and dimension. 
\end{thm}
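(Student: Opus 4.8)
The plan is to reduce Theorem~\ref{t:main1} to Proposition~\ref{p:scaleiteration}, so that all the work is in establishing the improvement of oscillation property \eqref{e:improv-of-oscillation} at unit scale for the cylinders $Q_r(x,t) = B_r(x)\times[t-r^\beta,t]$ with $\beta = p - \alpha(p-1)$, and then invoking the scale invariance worked out in Section~\ref{s:scaling} to get it at every scale. By the ${\mathcal C}^\alpha$ scaling from the preliminaries (with $\eps=0$, $f=0$), the subsolution/supersolution pair \eqref{e:fo1}--\eqref{e:fo2} is preserved, so it suffices to prove: if $u$ solves \eqref{e:fo1}--\eqref{e:fo2} and $\osc_{Q_1} u \leq 1$, then $\osc_{Q_\lambda} u \leq \lambda^\alpha$ for suitable universal $\lambda,\alpha\in(0,1)$ (with $\alpha < p'$).

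First I would normalize: after subtracting a constant assume $u$ takes values in $[-1/2,1/2]$ on $Q_1$ (wlog centered at the origin, so $Q_1 = B_1\times[-1,1]$ say, or the version with the cylinder sitting below time $0$). The key dichotomy is evaluated on the \emph{bottom} time-slice $\{t = -1\}\cap B_1$ (I will use whichever endpoint the statement of Proposition~\ref{p:scaleiteration} dictates; the improvement propagates forward in time). Either (i) there is a point $x_0\in B_{1/2}$ with $u(x_0,-1)\leq 0$, or (ii) $u(\cdot,-1) > 0$ on all of $B_{1/2}$.

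In case (i) I build an explicit \emph{supersolution} of \eqref{e:fo1} — using the Lax--Oleinik/Hopf formula, i.e.\ a function of the form $\phi(x,t) = C_0 + c_1\, t + c_2\,|x-x_0|^{p'}\,(t+1)^{1-p'}$ (the self-similar barrier adapted to the Hamiltonian $A|Du|^p$, whose associated Lagrangian has $c|q|^{p'}$ growth) — arranged so that $\phi(x_0,-1^+) $ is small (the singular $|x-x_0|^{p'}/(t+1)^{p'-1}$ term vanishes at $x=x_0$), $\phi \geq 1/2 \geq u$ on the lateral boundary $\partial B_1\times[-1,1]$ and at $t=-1$ away from $x_0$, and $\phi_t + A|D\phi|^p + A \leq 0$ in the interior; comparison then forces $u \leq \phi$, and evaluating $\phi$ on $Q_\lambda(x_0,-1+\lambda^\beta)$-type cylinders gives $u \leq c\lambda^\alpha$ on a small cylinder near $x_0$. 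Symmetrically, in case (ii), since $u(\cdot,-1)>0$ on $B_{1/2}$ I use an explicit \emph{subsolution} of \eqref{e:fo2} — a concave-in-$x$ self-similar barrier of the form $\psi(x,t) = c_1 t - c_2(1 - |x|^2/\rho^2)_+^{\text{something}}\cdots$, or more simply the Hopf-formula subsolution $\psi(x,t)=\sup_{|y|\le 1/2}\{u(y,-1) - c\,|x-y|^{p'}(t+1)^{1-p'}\} \geq 0$-type construction — which lies below $u$ and shows $u \geq c\lambda^\alpha > 0$ on a small sub-cylinder, hence $\osc$ drops because the infimum has moved up off its old value $-1/2$. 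In both cases one gets $\osc_{Q_{\lambda}} u \leq 1 - \theta$ for a universal $\theta>0$; choosing $\alpha$ small enough that $\lambda^\alpha \geq 1-\theta$ closes the induction, and $\alpha<p'$ is forced by the self-similar exponent $p'$ in the barriers.

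The main obstacle is the barrier construction in case (i): one must find, completely explicitly, a supersolution of $u_t + A|Du|^p + A\le 0$ that is essentially $0$ at a single spacetime point but $\geq 1/2$ on the rest of the parabolic boundary, with all constants depending only on $A,p,d$ — this is exactly the assertion that ``the Hamilton-Jacobi equation sees points.'' The Lax--Oleinik value function $\phi(x,t) = \inf_{\xi}\{g(\xi(-1)) + \int_{-1}^t L(\dot\xi)\,ds\}$ with $L(q)=c_{A,p}|q|^{p'}$ the Legendre transform of $A|\cdot|^p$ (up to the additive $-A$) is the natural candidate and is an exact supersolution by the dynamic programming principle; the work is choosing the terminal data $g$ (something like $0$ at $x_0$, $+\infty$ or $\geq 1/2$ elsewhere on $B_{1/2}$, plus controlling behavior near $\partial B_1$) and then extracting the quantitative decay $\phi \lesssim |x-x_0|^{p'}(t+1)^{1-p'} + (t+1)$ from the explicit formula $\phi(x,t) = c_{A,p}\,|x-x_0|^{p'}(t+1)^{1-p'} - A(t+1)$ when $g = 0$ at $x_0$ — handling the lateral boundary and the additive $\pm A$ constants carefully is the only real bookkeeping. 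Case (ii) is the mirror image with the roles of sup/inf and sub/super swapped and is no harder.
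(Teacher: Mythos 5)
Your high-level strategy is the same as the paper's: prove a scale-invariant improvement-of-oscillation lemma via a dichotomy on the bottom time-slice, construct Lax--Oleinik barriers in each case, and iterate using Proposition~\ref{p:scaleiteration}. The $|x|^{p'}(t-t_0)^{1-p'}$ profile and the resulting restriction $\alpha<p'$ are also correctly identified. However there are two issues, one cosmetic and one genuine.

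The cosmetic one: in case (i) you say you build a ``supersolution of \eqref{e:fo1}'' satisfying $\phi_t + A|D\phi|^p + A\le 0$. That inequality makes $\phi$ a \emph{sub}solution, and comparing it against $u$ (a supersolution of the same equation) yields $\phi\le u$, a \emph{lower} bound --- the opposite of what case (i) needs. To bound $u$ from above you must use \eqref{e:fo2}: take $\phi$ with $\phi_t + \tfrac1A|D\phi|^p - A\ge 0$, so that $u\le\phi$ on the parabolic boundary propagates inward. The Legendre constant of the barrier is therefore $c_p A^{p'-1}$, not the one coming from the Hamiltonian $A|\xi|^p$. Case (ii) has the symmetric swap.

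The genuine gap is geometric, and it is exactly what makes ``seeing points'' nontrivial. You acknowledge that the dichotomy point $x_0$ can sit anywhere in $B_{1/2}$, and you conclude that the barrier yields $u\lesssim\lambda^\alpha$ ``on a small cylinder near $x_0$.'' But Proposition~\ref{p:scaleiteration} requires $\osc_{Q_{\lambda r}(x,t)}u$ to drop with $Q_{\lambda r}(x,t)$ centered at the \emph{same} base point $(x,t)$ as $Q_r(x,t)$, not at $x_0$. If the bottom of the big cylinder sits at time $t-1$ and $x_0$ is at distance up to $1/2$ from the center $x$, then at time $t$ your barrier is of size $c_p A^{p'-1}|x-x_0|^{p'}$, which is not small for general $A$: for large $A$, $A^{p'-1}$ overwhelms the geometric factor and the improvement fails on the fixed cylinder. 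This is not ``only real bookkeeping''; without an extra device the argument does not close. The paper's device is to carry out the dichotomy in a much longer cylinder $B_2\times[-T,0]$ with $T$ chosen large so that $c_p(T-1)^{1-p'}A^{p'-1}3^{p'}\le 1-4\theta$ (and then $\theta$ small so that the case-(ii) lower barrier, with its $T^{1-p'}A^{1-p'}$ constant, still exceeds $2\theta$). The barrier thus has time to spread out and becomes uniformly small on the \emph{entire} slab $B_1\times[-1,0]$, not just near $x_0$. This improvement is then rescaled into the narrow cylinder $B_\lambda\times[-\lambda^{p/2},0]$ with $\lambda = r^{-2}$, $r = \max(2,T^{1/p})$, which is taller relative to its width than the unit cylinder; only then does Proposition~\ref{p:scaleiteration} apply (with $\alpha$ small so that $\lambda^\alpha\geq 1-\theta$ and so that $p-\alpha(p-1)>p/2$). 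Your proposal never introduces this long-time (equivalently: anisotropic-rescaling) step, and as written the lateral-boundary and top-slab constraints you need to impose on the barrier are incompatible when $A$ is large.
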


\begin{remark} Following section \ref{sec:unbounded},  the conclusion remains unchanged if $u$ is a continuous viscosity solution of the two equations
\begin{align}
u_t + A |D u|^p + A \geq 0\\
u_t + \frac 1 A |D u|^p - f(x,t)\leq 0
\end{align}
where $p>1$ and $f\in L^m(B_1\times(0,1))$ for some $m$ with $p(m-1)>d$. In this case, the constants $\alpha$ and $C$ depend on 
 $A$, $p$, $\|f\|_{L^m(B_1\times (0,1))}$, $\|u\|_\infty$ and dimension. 
\end{remark}

The proof of the improvement of oscillation lemma uses the Lax-Oleinik formula to construct local approximations of the function $u$ and then apply comparison principle. 

The classical Lax-Oleinik formula says that the solution $v$ of the equation
\[ v_t + H(D v)=0 \text{  in  } B_R \times [-T,0] \]
is given by the formula
\[ v(x,t) = \min_{(y,s) \in \bdary (B_R \times [-T,0])} v(y,s) + (t-s) L \left( \frac{x-y}{t-s} \right). \]
where $L$ is the Legendre transform of $H$: $L(\xi) = \max r\xi - H(r)$ and $\bdary (B_R \times [-T,0])$ denotes the parabolic boundary $B_R \times \{-T\} \cup \bdary B_R \times [-T,0]$. So, the formula above allows us to compute the values of $v$ everywhere from the values on the parabolic boundary only.

In particular, if $H(\xi) = \eps + A|\xi|^p$, $L(r) = -\eps + c_p A^{1-p'} |r|^{p'}$, where $1/p+1/p'=1$ and $c_p$ is some constant depending on $p$. For $H(\xi) = -\eps + \frac 1A|\xi|^p$, $L(r) = \eps + c_p A^{p'-1} |r|^{p'}$

\begin{lemma}[improvement of oscillation] \label{l:diminish-of-oscillation}
There exist positive constants $T$ (large), $\theta$ and $\eps$ (small) such that for all continuous functions $u$ which satisfy
\begin{align}
u_t + A |D u|^p + \eps &\geq 0 && \text{in } B_2 \times [-T,0] \label{e:1} \\
u_t + \frac 1A |D u|^p - \eps &\leq 0 && \text{in } B_2 \times [-T,0] \label{e:2} \\
0 \leq u \leq 1 &&& \text{in } B_2 \times [-T,0]
\end{align}
then
\[ \osc_{\Q_1 = B_1 \times [-1,0]} u \leq (1-\theta) \]
\end{lemma}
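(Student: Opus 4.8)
The plan is to prove the dichotomy described in the introduction: either $u$ is small at some point of the bottom slice $B_1 \times \{-1\}$ — in which case \eqref{e:2} forces $u$ to be small on all of $Q_1$ — or $u \geq 1/2$ at every point of $B_1 \times \{-1\}$, in which case \eqref{e:1} forces $u$ to be bounded below by a fixed positive constant on $Q_1$. In both cases $\osc_{Q_1} u \leq 1 - \theta$ for a suitable $\theta$, after fixing $T$ large and $\eps$ small depending only on $p$, $A$, $d$.

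First I would treat the ``contracting from above'' case. Suppose there exists $x_0 \in B_1$ with $u(x_0, -1) \leq 1/2$. I want to build, on a cylinder of the form $B_2 \times [-T, -1]$ (or rather on a cylinder whose bottom is at time $-T$ and top slice passes through time $-1$), an explicit supersolution $\bar v$ of \eqref{e:2}, i.e. a function with $\bar v_t + \tfrac1A |D\bar v|^p - \eps \geq 0$, such that $\bar v \geq u$ on the relevant parabolic boundary and $\bar v(x, -1)$ is strictly less than $1$ on $B_1$. The natural candidate is the Lax--Oleinik solution of $v_t + (-\eps + \tfrac1A|Dv|^p) = 0$ with boundary data equal to $1$ on $\partial B_2 \times [-T,-1]$ and on $B_2\times\{-T\}$ except near the single point $(x_0,-1)$ where we put the value $1/2$ — but since comparison only sees the parabolic boundary, the cleaner route is to solve backward from the top: take the terminal slice $\{-1\}$, and use that by the Lax--Oleinik formula with $H(\xi) = -\eps + \tfrac1A|\xi|^p$, $L(r) = \eps + c_p A^{p'-1}|r|^{p'}$, the function
\[
\bar v(x,t) = \min\Big( 1,\ \tfrac12 + (-1-t)\,\eps + (-1-t)\, c_p A^{p'-1} \Big| \frac{x-x_0}{-1-t}\Big|^{p'} \Big)
\]
defined for $t \leq -1$ is a supersolution of \eqref{e:2} (the min of two supersolutions is a supersolution, and each piece solves the PDE), agrees with $1 \geq u$ whenever the second argument hits $1$, and at $t$ slightly less than $-1$ is still below $1$ on a small ball around $x_0$. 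Propagating this by comparison from the parabolic boundary of $B_2 \times [-T, -1]$ up to $t = -1$ we would get $u(x,-1) \leq \bar v(x,-1) \leq 1 - \theta_1$ on all of $B_1$ for some fixed $\theta_1 > 0$, provided $T$ is large enough and $\eps$ small enough that the ``$+(-1-t)\eps$'' and the growth of the $|r|^{p'}$ term over the time interval of length $\leq T$ do not already push the value past $1$ on $B_1$. Then I apply \eqref{e:2} once more on the slab $B_2 \times [-1,0]$: the same Lax--Oleinik supersolution started from the slice $\{-1\}$ with data $\min(1, 1-\theta_1 \text{ on } B_1)$ shows $u \leq 1 - \theta_2$ on $Q_1 = B_1\times[-1,0]$, again for $\eps$ small and because the time length here is only $1$.

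Next, the symmetric ``lifting from below'' case: if $u(x,-1) \geq 1/2$ for every $x \in B_1$, I use \eqref{e:1}, which says $u$ is a supersolution of $w_t + A|Dw|^p + \eps = 0$, whose Lax--Oleinik formula uses $L(r) = -\eps + c_p A^{1-p'}|r|^{p'}$. Build the explicit subsolution
\[
\underline w(x,t) = \tfrac12 - (t+1)\Big(-\eps + c_p A^{1-p'} R^{-p'}\Big)\ \text{adjusted so } \underline w \leq 0 \text{ on } \partial B_2\times[-1,0],
\]
more precisely take $\underline w(x,t) = \max\big(0, \ \tfrac12 - (t+1)(-\eps) - c_p A^{1-p'} (t+1) |x|^{p'}/(\text{dist to }\partial B_2)^{\dots}\big)$; the cleanest is to use the cone $\underline w(x,t) = \tfrac12 - \mu(|x| - 1)_+ - \nu(t+1)$ truncated below by $0$, and check directly that for $\mu$ large (so it is $\leq 0$ on $\partial B_2$), $\nu$ depending on $\mu$, $\eps$, $p$, $A$, this is a subsolution of \eqref{e:1}. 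Since $\underline w \leq u$ on the parabolic boundary of $B_2\times[-1,0]$ (bottom slice by hypothesis, lateral boundary by truncation), comparison gives $u \geq \underline w \geq \theta_3 > 0$ on $Q_1$. Taking $\theta = \min(\theta_2, \theta_3)$ and then fixing $T$, $\eps$ to make both constructions work finishes the lemma.

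The main obstacle, and the place requiring genuine care rather than bookkeeping, is the first case: forcing $u$ down to a value bounded away from $1$ on the entire slice $B_1\times\{-1\}$ starting from smallness at a single point $(x_0,-1)$, over a possibly long time interval $[-T,-1]$. The tension is that the Lax--Oleinik cone from $x_0$ spreads to cover $B_1$ only if the time elapsed is large enough (this is why $T$ must be large), but the longer the time interval the more the ``$+(-1-t)\eps$'' drift and the $c_pA^{p'-1}|(x-x_0)/(-1-t)|^{p'}$ term threaten to exceed $1$; this is exactly why $\eps$ must be small and why $p > 1$ (hence $p' < \infty$, giving honest polynomial — not linear — spreading of the cone) is used. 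One has to choose, in order, first a radius/geometry so the cone from $x_0$ dominates $B_1$ at the terminal time using a fixed fraction of the available time, then $T$ large enough to realize that geometry, then $\eps$ small enough (depending on $T$) that the resulting terminal value on $B_1$ is $\leq 1-\theta_1$; the ordering of these choices is the subtle point. Everything else — the verification that the explicit functions are sub/supersolutions in the viscosity sense, and the invocation of the comparison principle on a bounded cylinder — is routine.
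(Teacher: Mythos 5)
Your overall plan (two cases, compare with explicit super/subsolutions built via Lax--Oleinik) is the same as the paper's, but the dichotomy is placed at the wrong time slice and the first case of your argument does not work as written.

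\textbf{The dichotomy must be at $t=-T$, not $t=-1$.} The paper splits according to whether $\min_{B_2} u(\cdot,-T)\leq 2\theta$ or not, and in each case propagates \emph{forward} from the slice $\{t=-T\}$ (which is the bottom of the parabolic boundary of $B_2\times[-T,0]$) directly to all of $Q_1$. You instead split according to whether $u(x_0,-1)\leq 1/2$ for some $x_0\in B_1$, and try to upgrade that to smallness of $u$ on all of $B_1\times\{-1\}$ by a comparison over $B_2\times[-T,-1]$. This intermediate step fails: the point $(x_0,-1)$ sits on the \emph{top} of $B_2\times[-T,-1]$, not on its parabolic boundary, so it cannot be used as data for the comparison principle on that cylinder. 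The barrier you write,
\[
\bar v(x,t)=\min\Big(1,\ \tfrac12+(-1-t)\eps+(-1-t)^{1-p'}c_pA^{p'-1}|x-x_0|^{p'}\Big),\quad t\leq -1,
\]
propagates backward in time (the cone opens as $t$ decreases), which is the wrong direction for the first-order HJ operator. At the slice $t=-1$ it equals $1$ for every $x\neq x_0$, so the claimed bound $\bar v(\cdot,-1)\leq 1-\theta_1$ on $B_1$ is simply false. Moreover the constant $1$ satisfies $1_t+\tfrac1A|D1|^p-\eps=-\eps<0$, so it is \emph{not} a supersolution of \eqref{e:2}, and the ``min of supersolutions'' argument breaks. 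More conceptually: knowing only that $u$ is small at a single space-time point $(x_0,-1)$ and that $u$ is a subsolution on $[-T,-1]$ gives no control on $u(x,-1)$ for $x\neq x_0$; a subsolution with rapidly decreasing time-slices can have arbitrarily large spatial gradient.

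\textbf{The true role of $T$ large.} Forward propagation from a single point at $t=-1$ cannot bound $u$ uniformly on $Q_1$: the cone term $(t+1)^{1-p'}|x-x_0|^{p'}$ blows up as $t\to-1^+$ whenever $x\neq x_0$. The cone needs time to spread over the whole of $B_1$ with values bounded away from $1$, which is exactly why the dichotomy has to be taken at the far-past slice $t=-T$ with $T$ large, as the paper does. Once this is fixed, your two-stage propagation (first $[-T,-1]$, then $[-1,0]$) is unnecessary: a single Lax--Oleinik bound from $\{t=-T\}$ reaches all of $Q_1$.

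\textbf{A secondary gap in case 2.} Your lower-bound hypothesis $u(\cdot,-1)\geq 1/2$ is only on $B_1\times\{-1\}$, whereas the comparison on $B_2\times[-1,0]$ requires the subsolution to lie below $u$ on all of $B_2\times\{-1\}$, where you only know $u\geq 0$. Your truncated cone $\max\big(0,\tfrac12-\mu(|x|-1)_+-\nu(t+1)\big)$ is strictly positive for $|x|$ slightly larger than $1$ at $t=-1$, so the boundary comparison fails there. (The paper avoids this by taking the dichotomy at $t=-T$ over the whole of $B_2$, not $B_1$.) Both issues disappear once the dichotomy is placed at $t=-T$.
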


\begin{proof}
Recall that the Legendre transform of the function $\xi \mapsto A |\xi|^p$ is given by the formula $c_p A^{1-p'} |\xi|^{p'}$.

We start by pointing out how $T$, $\theta$ and $\eps$ are chosen so as to make sure there is no circular dependence of the constants. We need the following three inequalities to hold:
\begin{align}
c_p (T-1)^{1-p'} A^{p'-1} 3^{p'} &\leq 1-4\theta \label{e:a1} \\
c_p T^{1-p'} A^{p'-1} &\geq 2\theta \label{e:a2} \\
\eps T &\leq \theta  \label{e:a3}
\end{align}

We can start by choosing $T$ large so that the left hand side of \eqref{e:a1} is strictly less than $1$. Then $\theta>0$ is chosen small enough to make \eqref{e:a1} and \eqref{e:a2} hold. Finally $\eps>0$ is chosen small so that \eqref{e:a3} holds.

Now we prove that the result of the lemma holds. Let $(x,t)$ be any point in $Q_1$. From \eqref{e:2} we obtain an upper bound for $u(x,t)$ using Lax Oleinik formula and the comparison principle
\begin{align*}
 u(x,t) &\leq \eps (t+T) + \min_{y\in B_2} u(y,-T) + c_p (T+t)^{1-p'} A^{p'-1} |x-y|^{p'} \\
& \leq \eps T + \min_{y \in B_2} u(y,-T) + c_p (T-1)^{1-p'} A^{p'-1} 3^{p'} \\
& \leq 1-3\theta + \min_{y \in B_2} u(y,-T)
\end{align*}

The last inequality above follows from \eqref{e:a1}. We see that if there is one point $y \in B_2$ such that $u(y,-T) \leq 2\theta$, then $u \leq 1-\theta$ in $Q_1$ and we would finish the proof. 

We must now analyze the case in which $u(y,-T) \geq 2\theta$ for all $y \in B_2$. For this case, we apply the Lax Oleinik formula and comparison principle again, but this time with \eqref{e:1}.
\[
u(x,t) \geq \min_{(y,s) \in \bdary (B_2 \times [-T,0])} -\eps (-s+t) + u(y,s) + c_p (-s+t)^{1-p'} A^{1-p'} |x-y|^{p'} \]

Recall that, by $\bdary (B_2 \times [-T,0])$ we denote the parabolic boundary $B_2 \times {-T} \cup \bdary B_2 \times [-T,0]$. We divide the estimate for $u(x,t)$ depending on what part of the boundary $(y,s)$ lies on.

If $y \in B_2$ and $s = -T$, 
\begin{align*}
-\eps (-s+t) + u(y,s) + c_p (-s+t)^{1-p'} A^{1-p'} |x-y|^{p'} &\geq - \eps T + \min_{y \in B_2} u(y,-T) \\
&\leq \theta
\end{align*}

On the other hand, if $y \in \bdary B_2$ and $s \in [-T,0]$ we use \eqref{e:a2} and $u \leq 1$,
\begin{align*}
-\eps (-s+t) + u(y,s) + c_p (-s+t)^{1-p'} A^{1-p'} |x-y|^{p'} &\geq \eps T  + c_p T^{1-p'} A^{1-p'} \\
&\leq \theta
\end{align*}

In any case we obtain $u(x,t) \geq \theta$ for all $(x,t) \in Q_1$. This finishes the second case of the proof.
\end{proof}

We now restate Lemma \ref{l:diminish-of-oscillation} in a more convenient form.

\begin{cor} \label{c:oscillation-1storder}
There exists a $\lambda \in (0,1)$ such that if  $u$ satisfies \eqref{e:fo1} and \eqref{e:fo2} in the unit cylinder $Q_1=B_1\times [-1,0]$ and $\osc_{Q_1} u \leq 1$, then $\osc_{B_\lambda \times [-\lambda^{p/2},0]} u \leq (1-\theta)$.
\end{cor}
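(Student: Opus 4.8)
The plan is to deduce the Corollary from the improvement of oscillation Lemma \ref{l:diminish-of-oscillation} by the $L^\infty$ (zoom-in) rescaling discussed in Section \ref{s:scaling}, which preserves the constant $A$ in front of $|Du|^p$ while driving the lower-order constant below the threshold $\eps$ of that Lemma. First I would normalize: since $\osc_{Q_1} u \leq 1$, after subtracting a constant we may assume $0 \leq u \leq 1$ in $Q_1 = B_1\times[-1,0]$, which changes neither \eqref{e:fo1} nor \eqref{e:fo2} (these involve only $u_t$ and $Du$). Let $T$ (large) and $\eps$ (small) be the constants furnished by Lemma \ref{l:diminish-of-oscillation}, and choose $a\in(0,1)$ so small that $2a\leq 1$, $a^p T\leq 1$ and $a^p A\leq \eps$; all three conditions hold for $a$ depending on $A$ and $p$ only, since $T$ and $\eps$ do.

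Set $v(x,t):=u(ax,a^p t)$. For $(x,t)\in B_2\times[-T,0]$ we have $(ax,a^p t)\in Q_1$ by the first two conditions on $a$, so $v$ is well defined there and $0\leq v\leq 1$. By the scaling identities of Section \ref{s:scaling} (with $c=1$, $b=a^p$), $v$ satisfies in the viscosity sense on $B_2\times[-T,0]$
\[
v_t + A|Dv|^p + a^pA \geq 0 \qquad\text{and}\qquad v_t + \tfrac1A|Dv|^p - a^pA \leq 0 .
\]
Since $a^pA\leq\eps$, these imply \eqref{e:1} and \eqref{e:2}, so all the hypotheses of Lemma \ref{l:diminish-of-oscillation} are met, and the Lemma gives $\osc_{B_1\times[-1,0]} v \leq 1-\theta$, i.e.\ $\osc_{B_a\times[-a^p,0]} u \leq 1-\theta$. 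Finally put $\lambda:=a^2\in(0,1)$; since $a\leq1$ we have $B_\lambda\subseteq B_a$ and $\lambda^{p/2}=a^p$, hence $B_\lambda\times[-\lambda^{p/2},0]\subseteq B_a\times[-a^p,0]$, and therefore $\osc_{B_\lambda\times[-\lambda^{p/2},0]}u\leq 1-\theta$, as claimed.

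There is no essential difficulty in this argument; the points that need care are that the rescaling must zoom \emph{in} ($a<1$), which is exactly what turns the lower-order constant $A$ into the smaller $a^pA\leq\eps$, and that $a$ must be taken small enough relative to $T$ so that $B_2\times[-T,0]$ is mapped into $Q_1$ — harmless, as $T$ depends only on $A$ and $p$. The invariance of the viscosity sub/supersolution property under the affine change of variables $(x,t)\mapsto(ax,a^p t)$ is standard and already recorded in Section \ref{s:scaling}.
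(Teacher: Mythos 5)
Your proof is correct and follows the same strategy as the paper's own proof of Corollary \ref{c:oscillation-1storder}: normalize so that $0\le u\le 1$, zoom in by $(x,t)\mapsto(ax,a^{p}t)$ to drive the zero-order constant in \eqref{e:fo1}--\eqref{e:fo2} below the threshold $\eps$ of Lemma \ref{l:diminish-of-oscillation}, apply that lemma, and take $\lambda=a^{2}$. You are in fact a bit more careful: the paper's choice $r=\max(2,T^{1/p})$ and the displayed coefficient $r^{-p}\eps$ implicitly presuppose that the zero-order term is already of size $\eps$ rather than $A$, whereas your third condition $a^{p}A\le\eps$ makes that reduction explicit.
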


\begin{proof}
Consider $\tilde u(x,r) = u(r^{-1}x,r^{-p}t) - \min_{Q_1} u$ where $r = \max(2,T^{1/p})$. The function $\tilde u$ satisfies
\begin{align*}
\tilde u_t + A|D \tilde u|^p + r^{-p} \eps &\geq 0, \\ 
\tilde u_t + \frac 1A |D \tilde u|^p - r^{-p} \eps &\leq 0,
\end{align*}
in the cylinder $B_r \times [-r^p,0]$ which includes $B_2 \times [-T,0]$. According to Lemma \ref{l:diminish-of-oscillation}, if $\eps$ is small enough and $\lambda = r^{-2}$,
\[ \osc_{B_\lambda \times [-\lambda^{p/2},0]} u \leq \osc_{B_1 \times [-1,0]} \tilde u \leq 1-\theta.\]
\end{proof}

Applying Corollary \ref{c:oscillation-1storder} at all scales with Proposition \ref{p:scaleiteration}, we prove Theorem \ref{t:main1}.

\begin{proof} [Proof of Theorem \ref{t:main1}.]
Without loss of generality, we prove the estimate for $(x,t) = (0,0)$ and $(y,s)$ sufficiently close to $(0,0)$. Also without loss of generality we assume that $0 \leq u \leq 1$. Otherwise we consider $(u - \inf u)/(\osc u)$ instead, which also solves inequalities with \eqref{e:fo1} and \eqref{e:fo2} with a slightly modified $A$ (see section \ref{s:scaling}).

We start by scaling the equation so that the zeroth order term is negligible and the oscillation of the function is one. We consider $\tilde u(x,t) = u(\rho x, \rho^p t)$ which solves
\begin{align*}
\tilde u_t + A|D \tilde u|^p + \rho^p A &\geq 0, \\ 
\tilde u_t + \frac 1A |D \tilde u|^p - \rho^p A &\leq 0,
\end{align*}
We choose $\rho$ small enough so that $\rho^p A < \eps$, for the constant $\eps$ of Corollary \ref{c:oscillation-1storder}. Note that this rescaling is just zooming in around a neighborhood of $(0,0)$. If we prove that $\tilde u$ is H\"older continuous at $(0,0)$, that is equivalent to $u$ being H\"older continuous at $(0,0)$ and the norms are related by a constant depending on $\rho$.

We now show that $\tilde u$ satisfies the iterative hypothesis of Proposition \ref{p:scaleiteration}. Let $\alpha\in(0, \frac{p}{2(p-1)})$ be a small positive constant that will be chosen below and $Q_r=B_r(0)\times[-r^{p-\alpha(p-1)}, 0]$. Suppose that $\osc_{Q_r} \tilde u \leq r^\alpha$.   Let $\tilde u^r(x,t) = r^{-\alpha} \tilde u(rx,r^{p-\alpha(p-1)}t)$. The function $\tilde u^r$ satisfies $\osc_{Q_1} \tilde u^r \leq 1$ and the inequalities
\begin{align*}
\tilde u^r_t + A|D \tilde u^r|^p + r^{p(1-\alpha)} \rho^p A &\geq 0, \\ 
\tilde u^r_t + \frac 1A |D \tilde u^r|^p - r^{p(1-\alpha)} \rho^p A &\leq 0.
\end{align*}
Note that $r^{p(1-\alpha)} \rho^p A \leq \rho^p A < \eps$. Thus, we can apply Corollary \ref{c:oscillation-1storder} and obtain $\osc_{B_\lambda \times [-\lambda^{p/2},0]} \tilde u^r \leq 1-\theta$. Let us choose $\alpha$ such that $\lambda^\alpha \geq 1-\theta$. Therefore we have
\[ \osc_{Q_{\lambda r}} \tilde u = r^\alpha \osc_{B_\lambda \times [-\lambda^{p-\alpha(p-1)},0]} \tilde u^r \leq  r^\alpha \osc_{B_\lambda \times [-\lambda^{p/2},0]} \tilde u^r \leq (1-\theta) r^\alpha \leq (\lambda r)^\alpha. \]
And thus, we proceed with Proposition \ref{p:scaleiteration}.
\end{proof}

\section{Second order equations with bounded right-hand side}
\label{sec:order2}

In this section we prove Theorem \ref{t:intro2} about Hamilton-Jacobi equations with possibly degenerate diffusions.

Theorem \ref{t:intro2} is a consequence of the following more precise statement.

\begin{thm}\label{t:main2}
Let $u$ be a continuous function that satisfies the following two inequalities in the viscosity sense:
\begin{align}
u_t + A|D u|^p + A m^-(D^2 u) + A &\geq 0 \qquad \text{and} \label{e:m1}\\
u_t + \frac 1 A |D u|^p + A m^+(D^2 u) - A &\leq 0 \label{e:m2}
\end{align}
in $B_1\times (-1,0)$. Then 
$$
|u(x,t)-u(y,s)|\leq C \left[ |x-y|^\alpha+|t-s|^{\frac{\alpha}{p-\alpha(p-1)}}\right] \qquad \forall (x,t),(y,s)\in B_{1/2}\times [-1/2,0]
$$
for some $\alpha \in (0, p')$ (where $1/p+1/p'=1$) and $C$ depending on $A$, $p$, $\|u\|_\infty$ and dimension. 
\end{thm}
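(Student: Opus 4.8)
The plan is to mimic the proof of Theorem~\ref{t:main1} step by step, replacing the Lax--Oleinik solutions used there by \emph{explicit} super- and subsolutions that are compatible with the second-order term; the hypothesis $p>2$ is exactly what lets the diffusion be treated as a negligible perturbation. Note that comparison with smooth super/subsolutions is available for the equations appearing in \eqref{e:m1}--\eqref{e:m2}, since they are degenerate elliptic.

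First I would carry out the reductions from the proof of Theorem~\ref{t:main1}: assume $0\le u\le 1$ (otherwise replace $u$ by $(u-\inf u)/\osc u$, which changes $A$ only mildly) and reduce, via Proposition~\ref{p:scaleiteration}, to proving an improvement-of-oscillation estimate at unit scale. Using the $L^\infty$-scaling $v(x,t)=u(\rho x,\rho^{p}t)$ of Section~\ref{s:scaling}, which for $\rho<1$ leaves $A$ in front of $|Du|^p$ untouched while multiplying the second-order coefficient by $\rho^{p-2}<1$ and the zeroth-order constant by $\rho^{p}<1$ (both $<1$ because $p>2$), I may assume $u$ satisfies the inequalities \eqref{e:m1}--\eqref{e:m2} with the coefficients of the zeroth-order and second-order terms replaced by an $\eps>0$ that is as small as I wish, on a large cylinder. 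The $\mathcal C^{\alpha}$-scaling of Section~\ref{s:scaling} then promotes the unit-scale statement to all scales provided $\alpha$ is small enough that $r^{-\alpha(p-1)+p-2}<1$ — again using $p>2$ — and produces the claimed modulus, with time exponent $\alpha/(p-\alpha(p-1))$ and the constraint $\alpha<p'$.

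The heart of the argument is the analogue of Lemma~\ref{l:diminish-of-oscillation}: there are $T$ large, $\theta$ and $\eps$ small (fixed in that order, to avoid a circular dependence), and a fixed large radius $R$ (chosen so that the barriers below fit), such that a continuous $u$ with $0\le u\le 1$ on $\overline{B_R}\times[-T,0]$ satisfying the rescaled inequalities has $\osc_{Q_1}u\le 1-\theta$ with $Q_1=B_1\times[-1,0]$. The dichotomy is the one of the first-order proof: \emph{either} $u(x_0,-T)\le 2\theta$ for some $x_0\in B_2$, \emph{or} $u(\cdot,-T)\ge 2\theta$ throughout $B_2$. In the first case one compares $u$ from above with an explicit supersolution $\Phi$ of the (rescaled) subsolution equation satisfying $\Phi\ge u$ on the parabolic boundary of $B_R\times[-T,0]$ and $\Phi\le 1-\theta$ on $Q_1$; in the second case one compares $u$ from below with an explicit subsolution $\Psi$ satisfying $\Psi\le u$ on the parabolic boundary and $\Psi\ge\theta$ on $Q_1$. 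In either case $\osc_{Q_1}u\le 1-\theta$, and one then restates this, as in Corollary~\ref{c:oscillation-1storder}, to feed it into Proposition~\ref{p:scaleiteration}.

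I expect essentially all the work to lie in the construction of $\Phi$ and $\Psi$ — the one genuinely new ingredient with respect to Section~\ref{sec:order1}, and presumably the lemma later reused in the proof of Theorem~\ref{t:intro3}. In the absence of diffusion, $\Phi$ could be taken to be the Lax--Oleinik ``cone'' $2\theta+\eps(t+T)+c_p(t+T)^{1-p'}|x-x_0|^{p'}$ issuing from the distinguished boundary point $(x_0,-T)$, exactly as in Section~\ref{sec:order1} (and a symmetric construction gives $\Psi$): the factor $(t+T)^{1-p'}$ is tiny on $Q_1$ when $T$ is large, so the cone stays below $1-\theta$ there, while it blows up on the rest of the bottom slice and dominates $u$ on the lateral boundary once $R$ is large compared with the diameter of $B_1$. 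The obstruction introduced by the diffusion is that such a cone is \emph{not} a viscosity supersolution of the diffusive equation along its vertex set $\{x=x_0\}$: since $p'<2$, the profile $|x-x_0|^{p'}$ can be touched from below at the vertex by arbitrarily convex paraboloids, so the term $-\eps\, m^{+}(D^2\cdot)$ is not controlled there by the gradient term, which vanishes on the spine; the subsolutions $\Psi$ present the mirror difficulty with $-\eps\, m^{-}(D^2\cdot)$. The technical core of the proof is therefore to replace these cones by explicit functions that keep the same ``concentration at $(x_0,-T)$ versus spreading over $Q_1$'' behaviour but are bona fide viscosity super/subsolutions everywhere — the right regularisation of the cone near its vertex, whose second-order cost can be absorbed precisely because $p>2$ makes $\eps$ as small as needed after the rescaling — and then to check, against the chosen $T,\theta,\eps,R$, the required inequalities (value $\le 2\theta$ at $(x_0,-T)$, domination of $u$ on the rest of the parabolic boundary, and $\le 1-\theta$ on $Q_1$). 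This is the step I expect to be the main obstacle.
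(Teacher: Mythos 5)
Your overall strategy matches the paper's: reduce, via the $L^\infty$- and $\mathcal C^\alpha$-scalings of Section~\ref{s:prelim} (both of which are favourable precisely because $p>2$), to a unit-scale improvement-of-oscillation lemma, and prove that lemma by a dichotomy on whether $u$ is small at one bottom point or bounded below on the whole bottom slice, comparing with explicit barriers in each case. You also correctly identify the real difficulty (the cone $|x-x_0|^{p'}$ has $p'<2$, so $m^+(D^2\cdot)$ blows up on its spine and it is not a supersolution of the diffusive equation) and correctly anticipate that the cure is a regularization of the cone near the vertex that one can afford because $\eps$ can be made arbitrarily small. Indeed the paper's Lemma~\ref{l:supersolution} does exactly this, with the explicit formula $U(x,t)=Ct^{-1/(p-1)}(|x|^2+t)^{p'/2}$ (and the $\eta$-dilation of Corollary~\ref{c:supersolution}).

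There are, however, two points where your expectations diverge from what is actually done, and the second is a genuine gap. First, the geometry is inverted relative to the first-order proof: rather than comparing on a tall cylinder $B_2\times[-T,0]$ with $T$ large, the paper fixes the time interval to $[0,1]$ and takes the spatial radii $R$ and $r$ \emph{small}. This is forced by the shape of the regularized cone: once $C$ is pinned down by the supersolution inequality, the only way to make $Ct^{-1/(p-1)}(|x-y|^2+\eta t)^{p'/2}$ small over the target set is to keep $|x-y|$ small, not to push $T$ large. Second — and this is the real issue — your statement that ``a symmetric construction gives $\Psi$'' would not pan out. The two halves of the dichotomy are genuinely asymmetric: transporting ``smallness at a single point'' upward requires a cone-type barrier, but transporting ``$u\ge 2\theta$ everywhere on the bottom slice'' downward only requires a barrier that decays slowly; and on the subsolution side the diffusion term $-\eps\,m^-(D^2L)\ge 0$ works \emph{against} you, so mirroring the regularized cone is not the right move. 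The paper instead uses the much simpler explicit subsolution of Lemma~\ref{l:subsolution}, a smooth truncated bump $L(x,t)=\theta\,b\!\left(\frac{|x|}{R}+\frac{t}{4}\right)-\frac{C\eps\theta^2}{R^2}t-\eps t$, whose gradient and Hessian are uniformly $O(\theta/R)$ and $O(\theta/R^2)$ respectively, so that the subsolution inequality reduces to choosing $\theta$ small relative to $R^{p/(p-1)}$. You should replace the ``symmetric'' expectation with this asymmetric pair of barriers; without it, the subsolution half of your dichotomy is unsupported.
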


The key to prove Theorem \ref{t:main2} is to prove an improvement of oscillation lemma. More precisely we will see that if the oscillation of a function $u$ in space-time cylinder is one and $u$ satisfies the two inequalities
\begin{align*}
u_t + A |D u|^p - \eps m^-(D^2 u) &\geq -\eps \\
u_t + \frac 1 A |Du|^p - \eps m^+(D^2 u) &\leq \eps
\end{align*}
then, if $\eps$ is small enough, the oscillation of $u$ in a smaller cylinder is less than $1-\theta$ for some $\theta>0$ depending only on $A$, $p$ and dimension.

The assumption of $\eps$ small is not restrictive, since any function $u$ satisfying \eqref{e:m1} and \eqref{e:m2} can be rescaled by $\tilde u(x,t) = u(rx, r^p t)$ in order to make the diffusion term and right hand side as small as we wish.

Along this section, we will find constants $R>0$, $r>0$, $\eps_0>0$ and $\theta>0$ depending only on $A$, $p$ and dimension, so that if $\eps \leq \eps_0$ and the oscillation of $u$ in $B_{R+r} \times [0,1]$ is less than one, then the oscillation of $u$ in $B_{R/2} \times [1/2,1]$ is less than $(1-\theta)$. Iterating this improvement of oscillation lemma at all scales leads to a H\"older continuity result following Proposition \ref{p:scaleiteration}.

We start by constructing the supersolution that \emph{sees a point}, which we will use to prove the improvement of oscillation in the first case. The supersolution is given explicitly and the proof that it is a supersolution amounts only to replace the values on the equation and check the inequality. It is a somewhat lengthy but elementary computation.

\begin{lemma} \label{l:supersolution}
For any $p>2$ and $A>0$, there is a constant $C$ (large) and $\eps_0>0$ (small), depending on $p$, $A$ and dimension only, such that the function
\[ U(x,t) = C t^{-\frac{1}{p-1}} (|x|^2 + t)^{p'/2} \]
is a supersolution of
\[ U_t + \frac 1A |DU|^p - \eps \ m^+ (D^2 U) \geq 0 \]
in $\R^d \times (0,+\infty)$, if $\eps < \eps_0$.
\end{lemma}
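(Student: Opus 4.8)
The plan is to verify directly that the explicit function $U(x,t) = C t^{-\frac{1}{p-1}}(|x|^2+t)^{p'/2}$ satisfies the differential inequality pointwise (it is smooth on $\R^d \times (0,\infty)$, so no viscosity subtleties arise). First I would introduce the shorthand $\phi = |x|^2 + t$ and $\tau = t^{-1/(p-1)} = t^{-(p'-1)}$ (using $1/(p-1) = p'-1$), so that $U = C\tau \phi^{p'/2}$. Then I would compute the three ingredients:
\begin{itemize}
\item $U_t$: by the product rule, $U_t = C\big(\tau_t \phi^{p'/2} + \tau \cdot \tfrac{p'}{2}\phi^{p'/2-1}\big)$, since $\partial_t \phi = 1$. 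Here $\tau_t = -(p'-1)t^{-p'}$, which is negative — this is the ``bad'' term we must dominate.
\item $DU$ and $|DU|^p$: $D\phi = 2x$, so $DU = C\tau p' \phi^{p'/2-1} x$ and hence $|DU|^p = C^p \tau^p (p')^p \phi^{p(p'/2-1)} |x|^p$. Using $p(p'-1) = p$... more precisely $p(p'/2 - 1) = pp'/2 - p$ and the identity $pp' = p+p'$ gives the exponent $\tfrac{p'}{2} + \tfrac{p'-p}{2}$; I would keep track of this carefully. The key point is that this is a positive term of the ``right'' order to beat $U_t$.
\item $m^+(D^2 U)$: since $U$ is radial in $x$, $D^2 U$ has eigenvalues coming from the radial and tangential parts; one bounds $m^+(D^2U)$ by the largest eigenvalue, which is a sum of terms of the form $C\tau\phi^{p'/2-2}$ (times $|x|^2$ or $\phi$). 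Because $p > 2$ we have $p'/2 - 2 < p'/2 - 1 < 0$, so near $t$ small (hence $\phi$ small on the relevant set) the diffusion term is genuinely lower order; this is where the hypothesis $p>2$ and the smallness of $\eps$ enter.
\end{itemize}

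The heart of the argument is the inequality
\[
\frac1A |DU|^p \;\geq\; -U_t + \eps\, m^+(D^2 U),
\]
which after dividing through by a common power of $C$, $\tau$ and $\phi$ reduces to an inequality of the schematic form
\[
\frac{(p')^p}{A}\, C^{p-1}\Big(\frac{|x|^2}{\phi}\Big)^{p/2} \;\geq\; (p'-1)\frac{t}{\phi} + \frac{p'}{2} + \eps\,(\text{bounded factor in }|x|^2/\phi, t/\phi),
\]
using $|x|^2 \le \phi$ and $t \le \phi$. I would split into two regimes: where $|x|^2 \geq \tfrac12\phi$ (equivalently $|x|^2 \geq t$), the left side is bounded below by $\tfrac{(p')^p}{A}C^{p-1}2^{-p/2}$, which can be made as large as we like by choosing $C$ large, dominating the right side once $\eps \le \eps_0$; where $|x|^2 < \tfrac12\phi$ (so $t > \tfrac12\phi$, i.e. $\phi < 2t$ and $|x|$ small), the term $U_t$ is actually dominated by the term $C\tau\tfrac{p'}{2}\phi^{p'/2-1} > 0$ coming from $\partial_t \phi$ in $U_t$ itself — wait, one must check signs — so in that regime one instead exploits that $-U_t$ is comparable to $\tau^{p'}\phi^{p'/2}$ with a fixed constant and that $|DU|^p$ is still large enough, or more robustly one observes $-U_t \le C(p'-1)\tau^{p'}\phi^{p'/2-1}\cdot \phi/t \le 2C(p'-1)\tau^{p'}\phi^{p'/2}$ hmm — the clean way is: since $t/\phi \le 1$ always, the first term on the right is $\le (p'-1)$, so the whole right side is bounded by a fixed constant $K(p,A) + \eps_0 \cdot K'(p,A)$ independent of $C$, while choosing $|x|^2/\phi$ — no. Let me restate: the genuinely delicate regime is $|x|$ very small where $|DU|$ degenerates; there one must use that $-U_t$ also degenerates (it is controlled by $\tfrac{p'}{2}$ times the positive $\phi$-derivative term plus $\tau_t$ term, and at $x=0$, $\phi = t$ so everything is an explicit power of $t$ and the inequality becomes a numerical one solvable by taking $C$ large relative to $A$, $p$). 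So after choosing $C = C(p,A,d)$ large enough to win the $|x|$-large regime and to win the $x=0$ numerical inequality, continuity/homogeneity closes the intermediate regime, and finally $\eps_0$ is chosen small depending on that $C$.

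The main obstacle I anticipate is bookkeeping the exponents of $\phi$ in the three terms and confirming they all collapse to the same power $\phi^{p'/2 - 1}$ after using $pp' = p + p'$ — i.e. that the inequality is genuinely scale-free in $\phi$ so that the two-regime ($|x|^2/\phi$ large vs. small) reduction works. A secondary obstacle is bounding $m^+(D^2U)$: I would compute $D^2 U$ explicitly as $C\tau\big(p'\phi^{p'/2-1} I + p'(p'-2)\phi^{p'/2-2} x\otimes x\big)$, note both coefficients and hence all eigenvalues have size at most $C\tau \phi^{p'/2-2}\cdot(\text{const})$ on the set where $|x|^2 \le \phi$, and check $p'/2 - 2 \ge -(p'-1) \cdot(\text{nothing})$... concretely that the diffusion term's power of $\phi$ (namely $\phi^{p'/2-1}$ after multiplying by the $|x|^2$ or $\phi$ from the eigenvalue times nothing) matches, with $\eps$ supplying the smallness. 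Once the exponent accounting is pinned down, the rest is the routine "choose $C$ large, then $\eps_0$ small" argument.
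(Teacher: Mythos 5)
Your overall strategy --- direct pointwise verification that the explicit, smooth $U$ is a classical supersolution --- is exactly the paper's, and your computations of $U_t$, $DU$, $|DU|^p$, $D^2U$ and $m^+(D^2U)$ are essentially correct, as is the exponent bookkeeping via $pp'=p+p'$ and $\tfrac{1}{p-1}=p'-1$ that makes all three terms share the common factor $Ct^{-p'}(|x|^2+t)^{p'/2}$. However, the final step --- closing the resulting scalar inequality --- has a real gap. After factoring out the common prefactor the inequality reduces, with $s=|x|^2/(|x|^2+t)$ and $r=t/(|x|^2+t)=1-s$, to
\[
\Big(\tfrac{p'}{2}-\eps c_p\Big)\,r\;+\;\tfrac{c_p'}{A}\,C^{p-1}s^{p/2}\;\geq\;\tfrac{1}{p-1}\qquad\text{for all }s\in[0,1].
\]
Your split at $|x|^2=\tfrac12\phi$ (i.e.\ $s=r=\tfrac12$) does not cover all $p>2$: in the regime $s<\tfrac12$ you rely on the positive part of $U_t$ alone, namely $\tfrac{p'}{2}r>\tfrac{1}{p-1}$, which at $r=\tfrac12$ says $\tfrac{p}{4(p-1)}\geq\tfrac{1}{p-1}$, i.e.\ $p\geq 4$. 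For $p\in(2,4)$ that regime argument simply fails, and your fallback that ``continuity/homogeneity closes the intermediate regime'' is not a proof: the needed bound is quantitative for every $s\in[0,1]$, and in fact the function $g(s)=(\tfrac{p'}{2}-\eps c_p)(1-s)+\tfrac{c_p'}{A}C^{p-1}s^{p/2}-\tfrac{1}{p-1}$ genuinely dips below its value at $s=0$ for small $s>0$ (its derivative at $0^+$ is $-(\tfrac{p'}{2}-\eps c_p)<0$ because $p/2>1$), so nothing is automatic there.

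The repair, which is what the paper does, is to treat this as a single one-variable inequality $g(s)\geq 0$ on $[0,1]$. First choose $\eps_0$ so that $g(0)=\tfrac{p'}{2}-\eps c_p-\tfrac{1}{p-1}>0$; this is exactly where $p>2$ enters, since $\tfrac{p'}{2}>\tfrac{1}{p-1}\iff p>2$. Then observe that, since $p/2>1$, the minimum over $[0,1]$ of $s\mapsto \tfrac{c_p'}{A}C^{p-1}s^{p/2}-(\tfrac{p'}{2}-\eps c_p)s$ tends to $0$ as $C\to\infty$, so for $C$ large $g\geq 0$ on all of $[0,1]$. If you insist on a two-regime split, the threshold must be chosen small (of order $g(0)/p'$, not $\tfrac12$), with the margin at $s=0$ handling the small-$s$ regime and $C$ large handling the rest; the fixed cutoff $\tfrac12$ is the wrong choice.
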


\begin{proof}
Just compute.
\begin{align*}
U_t &= C t^{-p'} (|x|^2 + t)^{p'/2} \left( -\frac{1}{p-1} + \frac{p'} 2 \frac{t}{|x|^2+t} \right) \\
\frac 1A |DU|^p &= \frac 1A C^p c_p t^{-p'} (|x|^2 + t)^{p'/2} \ \frac{|x|^p}{(|x|^2+t)^{p/2}} \\
\eps \  m^+ (D^2 U) &= C \eps  t^{\frac{-1}{p-1}} (|x|^2 + t)^{p'/2} \left(c'_p \frac{1}{|x|^2 + t} + c''_p \frac{|x|^2}{(|x|^2 + t)^2} \right)\\
& \leq C t^{\frac{-1}{p-1}} (|x|^2 + t)^{p'/2} \left( \eps c'''_p \frac{t}{|x|^2 + t} \right)
\end{align*}
Where $c_p$, $c'_p$, $c''_p$ and $c'''_p$ are constants depending only on $p$ and dimension. Therefore
\begin{align*}
U_t + \frac 1A |DU|^p - \eps \ m^+ (D^2 U) &\geq C t^{-p'} (|x|^2 + t)^{p'/2} \left( \frac{(\frac{p'} 2 - \eps c'''_p) t}{|x|^2+t} + \frac{ C^{p-1} c_p |x|^p}{A(|x|^2+t)^{p/2}} -\frac{1}{p-1} \right)
\end{align*}
all we are left to do is to show that if $C$ is large and $\eps$ is small enough (depending only on $p$ and dimension) then the last factor in the right hand side is nonnegative.

Let 
\begin{align*}
r &=  \frac{t}{|x|^2+t} \\
s &=  \frac{|x|^2}{|x|^2+t}
\end{align*}
So we have $r+s=1$ and moreover
\begin{align*}
U_t + \frac 1A |DU|^p - \eps \ m^+ (D^2 U) &\geq C t^{-p'} (|x|^2 + t)^{p'/2} \left( (\frac{p'} 2 - \eps c'''_p) r + \frac 1A C^{p-1} c_p s^{p/2} -\frac{1}{p-1} \right)
\end{align*}
We first choose $\eps_0$ small so that $(\frac{p'} 2 - \eps c'''_p) > 1/(p-1)$. Note that we can make this choice because $p>2$. Next, we choose $C$ large so that
\[ \frac 1A C^{p-1} c_p s^{p/2} - (\frac{p'} 2 - \eps c'''_p) s \geq -(\frac{p'} 2 - \eps c'''_p) + \frac{1}{p-1} \]
for all values of $s$. Then the desired inequality follows by writing $r = 1-s$.
\end{proof}

\begin{cor} \label{c:supersolution}
For any $\eta>0$ the function
\[ U(x,t) = C t^{-\frac{1}{p-1}} (|x|^2 + \eta t)^{p'/2} \]
is a supersolution of 
\[ U_t + \frac 1A |DU|^p - \eps \ m^+ (D^2 U) \geq 0 \]
if $\eps < \eta \eps_0$, where $C$ and $\eps_0$ are the same constants as in Lemma \ref{l:supersolution} (depending only on $A$, $p$ and dimension).
\end{cor}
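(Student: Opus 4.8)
The plan is to reduce Corollary \ref{c:supersolution} to Lemma \ref{l:supersolution} by a simple parabolic rescaling, exploiting the same scaling structure of the equation that was analyzed in Section \ref{s:scaling}. The point is that the function appearing in the corollary, $U(x,t) = C t^{-\frac{1}{p-1}}(|x|^2 + \eta t)^{p'/2}$, differs from the function in the lemma only by the insertion of the parameter $\eta$ in front of $t$ inside the parenthesis, and this can be absorbed by dilating the time variable.

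Concretely, I would set $V(x,t) := U(x, t/\eta)$, so that $V(x,t) = C (t/\eta)^{-\frac{1}{p-1}}(|x|^2 + t)^{p'/2} = \eta^{\frac{1}{p-1}} \tilde U(x,t)$, where $\tilde U(x,t) = C t^{-\frac{1}{p-1}}(|x|^2+t)^{p'/2}$ is exactly the function of Lemma \ref{l:supersolution}. First I would observe that if $\tilde U$ is a supersolution of $\tilde U_t + \frac1A|D\tilde U|^p - \eps_0' m^+(D^2\tilde U) \geq 0$ for some $\eps_0' < \eps_0$, then one checks directly how multiplication by the constant $\eta^{\frac{1}{p-1}}$ and the time dilation $t \mapsto t/\eta$ transform each term. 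The multiplicative constant $c := \eta^{\frac{1}{p-1}}$ pulls out of $U_t$ and $m^+(D^2 U)$ linearly, and out of $|DU|^p$ with a power $c^p$; the time dilation by factor $b = 1/\eta$ multiplies the time derivative by $b$. Matching this against the scaling rule $v(x,t) = cu(ax,bt)$ from Section \ref{s:scaling} with $a=1$, $b = 1/\eta$, $c = \eta^{1/(p-1)}$, and using that $c^{1-p} = \eta^{-1}$, one finds that $U = V/c$ — equivalently $U$ itself — solves the inequality $U_t + \frac1A|DU|^p - \eps\, m^+(D^2 U) \geq 0$ precisely when the diffusion coefficient is scaled: the condition $\eps < \eps_0$ from the lemma becomes $\eps < \eta\,\eps_0$ for $U$, because the $m^+$ term picks up the extra factor $b = 1/\eta$ relative to the other terms. (More concretely: $U_t(x,t) = \tfrac1\eta \tilde U_t(x, t/\eta)$ up to the constant, while $|DU|^p$ and $m^+(D^2 U)$ are evaluated with only the constant, so dividing through, the $m^+$ term is effectively multiplied by $\eta$ when compared to the normalization of the lemma.)

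An equally clean alternative — and probably the cleanest to write — is to verify the corollary by repeating the computation of Lemma \ref{l:supersolution} verbatim with $t$ replaced by $\eta t$ inside the parenthesis, tracking that the only change is that the coefficient $c'''_p$ multiplying $\eps$ in the $m^+$ bound gets divided by $\eta$ (since $\partial_t (|x|^2 + \eta t) = \eta$ contributes to $U_t$ but the Hessian in $x$ is unchanged, so the ratio of the $\eps m^+$ term to the coercive term $U_t$ scales like $\eta^{-1}$). Then the same choice of $\eps_0$ works after replacing $\eps$ by $\eps/\eta$, which is exactly the statement $\eps < \eta \eps_0$.

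The main obstacle is purely bookkeeping: making sure the exponents of $\eta$ coming from the $t^{-1/(p-1)}$ prefactor, the $(|x|^2+\eta t)^{p'/2}$ factor, and the chain rule in $\partial_t$ all cancel correctly so that the net effect is precisely to replace $\eps$ by $\eps/\eta$ in the hypothesis of Lemma \ref{l:supersolution}, with the constant $C$ and the threshold $\eps_0$ unchanged. There is no analytic difficulty — $C$ and $\eps_0$ are inherited directly from the lemma — so the proof should be just a couple of lines: "apply Lemma \ref{l:supersolution} to $\tilde U(x,t) = U(x,t/\eta)\,\eta^{-1/(p-1)}$ and undo the scaling," or equivalently "rerun the computation of Lemma \ref{l:supersolution} with $\eps$ replaced by $\eps/\eta < \eps_0$."
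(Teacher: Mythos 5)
Your proof is correct and takes essentially the same approach as the paper: both reduce to Lemma~\ref{l:supersolution} by a rescaling that preserves the first-order term and multiplies the diffusion coefficient by $\eta$. The paper uses the $L^\infty$ scaling $U_r(x,t)=U(rx,r^pt)$ with $r^{p-2}=\eta$, while you use a time-only dilation together with the amplitude factor $\eta^{1/(p-1)}$; because the lemma's function is parabolically homogeneous, both rescalings produce the same function and the bookkeeping is equivalent.
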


\begin{proof}
Let $U$ be the function of Lemma \ref{l:supersolution}. For $r<1$, consider 
\[ U_r(x,t) = U(rx,r^pt) =C t^{-\frac{1}{p-1}} (|x|^2 + r^{p-2} t)^{p'/2} \]
which by scaling solves
\[ U_t + \frac 1A |DU|^p - \eps r^{p-2} \ m^+ (D^2 U) \geq 0 \]
so we choose $r$ small so that $r^{p-2} = \eta$ and $\eta \eps_0$ instead of the $\eps_0$ of Lemma \ref{l:supersolution}.
\end{proof}

The following Corollary reminds us to the Lax-Oleinik formula, except that it is not an identity but an inequality instead.

\begin{cor} \label{c:semi-LaxOleinik}
Let $R>0$ and $u$ be a subsolution of
\[ u_t + \frac 1A |Du|^p - \eps m^+ (D^2 u) - \eps \leq 0 \text{ in } B_{R+r} \times [0,1]\]
where $r = (1/C)^{1/p'}$ and $C$ and $\eps$ are as in Corollary \ref{c:supersolution}. Moreover, assume that $u \leq 1$ in the same cylinder. Then
\[ u(x,t) \leq \min_{y \in B_R} u(y,0) + U(x-y,t) + \eps t\]
where $U$ is the function given in Corollary \ref{c:supersolution}.
\end{cor}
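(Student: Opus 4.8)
The plan is to prove the stated comparison inequality by applying the comparison principle for viscosity sub/supersolutions on the cylinder $B_{R+r}\times[0,1]$, with a suitable translate of the function $U$ from Corollary \ref{c:supersolution} playing the role of the supersolution. Fix $y\in B_R$ and set
\[
V(x,t) = u(y,0) + U(x-y,t) + \eps t.
\]
Since $U$ solves $U_t + \tfrac1A|DU|^p - \eps\, m^+(D^2 U)\geq 0$ in $\R^d\times(0,+\infty)$ and the translation $x\mapsto x-y$ commutes with all the operators involved, $U(x-y,t)$ is still a supersolution of the same equation on $\R^d\times(0,+\infty)$. Adding the constant $u(y,0)$ changes nothing, and adding $\eps t$ contributes exactly $+\eps$ to the time derivative, so $V$ is a supersolution of
\[
V_t + \tfrac1A|DV|^p - \eps\, m^+(D^2 V) - \eps \geq 0
\quad\text{in } \R^d\times(0,+\infty),
\]
i.e. $V$ satisfies the reverse of the inequality satisfied by $u$. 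Thus $u$ and $V$ are ordered-type sub/supersolutions of the same operator, and it remains to check the ordering on the parabolic boundary of $B_{R+r}\times(0,1]$ and invoke comparison.

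The boundary check splits into two pieces. On the bottom $\{t=0\}$: here $U(x-y,0) = C|x-y|^{-0}$... more precisely $U(x-y,0) = C\cdot 0^{-1/(p-1)}(|x-y|^2)^{p'/2}$, which requires care — the function $U$ blows up as $t\to 0^+$ unless $x=y$. The right way to handle this is to note that $t^{-1/(p-1)}(|x-y|^2+\eta t)^{p'/2}\to +\infty$ as $t\to0^+$ for $x\neq y$, while at $x=y$ it behaves like $t^{-1/(p-1)}(\eta t)^{p'/2} = \eta^{p'/2} t^{p'/2 - 1/(p-1)}$; since $p'/2 - 1/(p-1) = \tfrac12(p'-p'/(p-1))\cdot$... one computes $p'/2 - 1/(p-1)$ and checks it is $\geq 0$ for $p>2$ (indeed $p'/2 \geq 1/(p-1) \iff p' (p-1)\geq 2 \iff p\geq 2$), so $U(x-y,t)\to 0$ as $(x,t)\to(y,0)$ and $\to+\infty$ as $t\to0$ along $x\neq y$. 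Hence $\liminf_{t\to 0^+} V(x,t) \geq u(y,0) \geq u(x,0)$ when $x=y$ (using continuity of $u$) and $V\to+\infty\geq u$ otherwise. On the lateral boundary $\partial B_{R+r}\times[0,1]$: for $x\in\partial B_{R+r}$ and $y\in B_R$ we have $|x-y|\geq r$, so
\[
U(x-y,t) \geq C\, t^{-1/(p-1)} (r^2)^{p'/2} = C\, r^{p'}\, t^{-1/(p-1)} = t^{-1/(p-1)} \geq 1
\]
using $r = (1/C)^{1/p'}$ and $t\leq 1$; therefore $V(x,t)\geq u(y,0) + 1 + \eps t \geq 1 \geq u(x,t)$ by the hypothesis $u\leq 1$. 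Here one must be slightly careful that $(|x-y|^2+\eta t)^{p'/2}\geq (|x-y|^2)^{p'/2} = |x-y|^{p'}$, which holds since $\eta,t\geq 0$ and $p'>1$.

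With the ordering $u\leq V$ established on the parabolic boundary and both being respectively sub/super solutions of the same (proper, degenerate-parabolic) equation $w_t + \tfrac1A|Dw|^p - \eps m^+(D^2 w) - \eps = 0$, the comparison principle gives $u\leq V$ throughout $B_{R+r}\times[0,1]$, i.e. $u(x,t)\leq u(y,0) + U(x-y,t)+\eps t$. Since $y\in B_R$ was arbitrary, we may take the minimum over $y\in B_R$, which yields the claim. The main technical obstacle I expect is the behavior at the initial time slice $t=0$, where $U$ is singular: one needs the decay estimate $p'/2 \geq 1/(p-1)$ for $p>2$ to ensure $U(x-y,\cdot)$ vanishes as $t\to 0$ precisely at $x=y$, and one should phrase the comparison on $B_{R+r}\times(0,1]$ taking limits $t\to 0^+$ rather than evaluating at $t=0$ directly, or equivalently first compare on $B_{R+r}\times[\delta,1]$ using the lower bound $\min_{B_R} u(\cdot,\delta)$ and then let $\delta\to 0$ using continuity of $u$. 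A secondary point to be careful about is that $m^+$ is not linear, so "supersolution of the $m^+$ equation plus an affine-in-$t$ function is still a supersolution" must be justified via the viscosity definition (test functions), which is immediate since adding $\eps t$ shifts the test function's time derivative by $\eps$ and leaves its Hessian unchanged.
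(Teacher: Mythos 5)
Your proof is correct and follows the same approach as the paper: compare $u$ with the explicit supersolution built from Corollary \ref{c:supersolution}, translated by $y$ and shifted by $u(y,0)+\eps t$, then use the comparison principle. The only cosmetic difference is that you fix $y\in B_R$ first and take the minimum over $y$ at the very end, whereas the paper compares directly against $\min_{y\in B_R}\bigl[u(y,0)+U(x-y,t)\bigr]+\eps t$ invoking that a minimum of supersolutions is a supersolution; you also spell out the behavior of $U$ near $t=0^+$ (the exponent inequality $p'/2>1/(p-1)$ for $p>2$) more carefully than the paper, which simply asserts $V=u$ on $B_R\times\{0\}$ and $V=+\infty$ elsewhere at $t=0$.
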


\begin{proof}
The function 
\[ V(x,t) = \min_{y \in B_R} u(y,0) + U(x-y,t)+\eps t \]
is a minimum of translated supersolutions, then it is also a supersolution of
\[ V_t + \frac 1A |DV|^p - \eps m^+ (D^2 V) - \eps \geq 0 \]
in $\R^d \times [0,1]$. It is easy to check that $V \geq u$ on the boundary of the cylinder $B_{R+r} \times [0,1]$ since $V=u$ in $B_R \times \{0\}$, $V=+\infty$ in the rest of $\R^d \times \{0\}$ and $V \geq 1 \geq u$ on $\partial B_{R+r} \times [0,1]$.
\end{proof}

The following Lemma proves the first case of improvement of oscillation: when the function $u$ is small at one single point on $\{t=0\}$. In this case we will show that the maximum of $u$ is less than one in a smaller cylinder. We only need to assume that $u$ is a subsolution.

\begin{lemma} \label{l:upperhalf-improv}
Let $\theta \in (0,1/4)$. If $u$ is a subsolution of 
\[ u_t + \frac 1A |Du|^p - \eps m^+(D^2 u) - \eps \leq 0 \text{ in } B_{R+r} \times [0,1]\]
such that $u \leq \theta$ at some point in $B_R \times \{0\}$ and $0\leq  u \leq 1$ everywhere in that cylinder. If $\eps$ and $R$ are small enough (depending on $A$, $p$ and dimension but not on $\theta$), then $u \leq 1-\theta$ in $B_{R} \times [1/2,1]$.
\end{lemma}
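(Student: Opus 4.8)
The plan is to use the approximate Lax–Oleinik inequality from Corollary \ref{c:semi-LaxOleinik} to propagate the smallness of $u$ at a single point $x_0 \in B_R$ at time $t=0$ to smallness of $u$ throughout $B_R \times [1/2,1]$. First I would apply Corollary \ref{c:semi-LaxOleinik} with the minimizing $y = x_0$ (the point where $u(x_0,0)\leq\theta$), which gives, for every $(x,t)\in B_{R+r}\times[0,1]$,
\[
u(x,t) \;\leq\; u(x_0,0) + U(x-x_0,t) + \eps t \;\leq\; \theta + C\,t^{-\frac{1}{p-1}}\bigl(|x-x_0|^2 + \eta t\bigr)^{p'/2} + \eps .
\]
The term $\eps$ we discard into $\theta$-free smallness since $\eps\le\theta$ is not assumed — rather I want $\eps$ small in an absolute sense, so I would keep $\eps t \leq \eps$ and absorb it below. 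The main point is to estimate the middle term. For $(x,t)\in B_R\times[1/2,1]$ we have $|x-x_0|\leq 2R$ and $t\in[1/2,1]$, so
\[
C\,t^{-\frac{1}{p-1}}\bigl(|x-x_0|^2 + \eta t\bigr)^{p'/2} \;\leq\; C\,2^{\frac{1}{p-1}}\bigl(4R^2 + \eta\bigr)^{p'/2}.
\]
Now I choose $R$ small and $\eta$ small (equivalently, via Corollary \ref{c:supersolution}, $\eps$ small compared to $\eta\eps_0$, and $r=(1/C)^{1/p'}$ fixed accordingly) so that this quantity is at most, say, $1/4$, and simultaneously $\eps \leq 1/4$. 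Then
\[
u(x,t) \;\leq\; \theta + \tfrac14 + \tfrac14 \;=\; \theta + \tfrac12 \;\leq\; 1-\theta
\]
since $\theta < 1/4$. This gives the claimed bound $u \leq 1-\theta$ in $B_R\times[1/2,1]$.

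A few bookkeeping points need care. The choice of constants must be made in the right order to avoid circularity: $C$ and $\eps_0$ come from Lemma \ref{l:supersolution} (depending only on $A,p,d$); then $r=(1/C)^{1/p'}$ is fixed; then I pick $\eta$ small enough that $C\,2^{1/(p-1)}\eta^{p'/2} \le 1/8$, and $R$ small enough that $C\,2^{1/(p-1)}(4R^2+\eta)^{p'/2}\le 1/4$; finally $\eps$ is taken smaller than both $\eta\eps_0$ (so Corollary \ref{c:supersolution}, hence Corollary \ref{c:semi-LaxOleinik}, applies) and $1/4$. None of these depend on $\theta$, as required. One should also note that Corollary \ref{c:semi-LaxOleinik} is stated on $B_{R+r}\times[0,1]$ with the minimum over $y\in B_R$, and here we only need the single competitor $y=x_0$, so the hypothesis that $u$ is a subsolution on the slightly larger cylinder $B_{R+r}\times[0,1]$ with $0\le u\le 1$ is exactly what we assumed.

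The only genuine subtlety — and the step I expect to be the main obstacle — is making sure the blow-up of $U$ as $t\to 0^+$ does not interfere: $U(x-x_0,t)$ behaves like $C t^{-1/(p-1)}|x-x_0|^{p'}$ for small $t$, which is large, but we only evaluate it at times $t\in[1/2,1]$, where $t^{-1/(p-1)}$ is bounded by $2^{1/(p-1)}$; so this is not actually a problem provided we are careful to restrict to the upper half of the time interval in the conclusion. (This is why the conclusion is stated on $[1/2,1]$ rather than $[0,1]$.) Everything else is the elementary estimate above, so I would present the argument essentially as written, with the order-of-choice of constants spelled out explicitly.
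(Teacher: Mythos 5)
Your proposal is correct and follows essentially the same route as the paper: apply Corollary \ref{c:semi-LaxOleinik} with the single competitor $y=x_0$, bound $U(x-x_0,t)$ by $C\,2^{1/(p-1)}(4R^2+\eta)^{p'/2}$ for $(x,t)\in B_R\times[1/2,1]$, and choose $\eta$, $R$, $\eps$ in that order so the $\theta$-independent remainder is at most $1/2\le 1-2\theta$. Your explicit ordering of the constant choices is actually a cleaner presentation than the paper's terse ``choose $\eps$ small so that $\eta$ is small, and $R$ small.''
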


\begin{proof}
Let $u(y_0,0) \leq \theta$. We apply the formula from Corollary \ref{c:semi-LaxOleinik}. 
\begin{align*}
u(x,t) &\leq \min_{y \in B_R} u(y,0) + U(x-y,t) + \eps t \\
&\leq \theta + U(x-y_0,t) + \eps \\
&\leq  \theta + C 2^{\frac{1}{p-1}} ((2R)^2 + \eta)^{p'/2}+ \eps
\end{align*}
Then we choose $\eps$ small so that $\eta$ is small, and $R$ small in order to obtain that the sum of the last two second terms is less than $1/2$, which is in turn less than $(1-2\theta)$.
\end{proof}

Now we construct the subsolution which we will use to prove the other half of the improvement of oscillation lemma.

\begin{lemma} \label{l:subsolution}
Let $b:\R \to \R$ be a smooth monotone decreasing function such that 
\[ b(t) = \begin{cases}
1 & \text{if } t < 3/4 \\ 
0 & \text{if } t > 1  
\end{cases} \]
If $C$ is large and $\theta$ is small enough depending $p$, $A$, $R$ and dimension,  the function
\[ L(x,t) =  \theta b\left(\frac{|x|}{R} + \frac{t}{4}\right) -  \frac{C \eps \theta^2}{R^2} t - \eps t\]
is a subsolution of
\[ L_t + A |DL|^p - \eps m^-(D^2 L) + \eps \leq 0\qquad {\rm in}\; \R^d\times (0,1) \]
\end{lemma}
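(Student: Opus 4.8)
The plan is to verify directly that the explicit function $L$ satisfies the differential inequality, splitting into the two regions where the cutoff $b$ is active or constant. First I would compute the derivatives: writing $\phi(x,t) = |x|/R + t/4$ as the argument of $b$, we have $DL = \theta b'(\phi) \frac{x}{R|x|}$, so $|DL| = \frac{\theta}{R} |b'(\phi)|$, and $L_t = \frac{\theta}{4} b'(\phi) - \frac{C\eps\theta^2}{R^2} - \eps$. Since $b$ is monotone decreasing, $b'(\phi) \leq 0$, so both $L_t$ (the $b'$ part) and the $A|DL|^p$ term will be estimated, with the strictly negative constants $-\frac{C\eps\theta^2}{R^2} t - \eps t$ providing the slack needed. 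For the Hessian I would compute $D^2 L = \theta b''(\phi) \frac{x \otimes x}{R^2 |x|^2} + \theta b'(\phi) \frac{1}{R|x|}\left(I - \frac{x\otimes x}{|x|^2}\right)$, so $m^-(D^2L) \geq -\frac{C_0 \theta}{R^2}$ for a dimensional constant $C_0$ bounding $\|b\|_{C^2}$ (here I use that $|x|$ stays bounded away from $0$ on the support of $b'$, or else note $b'(\phi)/|x|$ term has a favorable sign when $|x|$ is small — actually on $\{|x| < R/4\}$, $b(\phi)=1$ and all derivatives vanish, so $b', b''$ are supported on $|x| \geq R/4$ where these quotients are bounded).

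The key case split: on the region where $\phi < 3/4$ or $\phi > 1$ (so $b$ is locally constant and $b'=b''=0$ there), we have $DL = 0$, $m^-(D^2L)=0$, and $L_t = -\frac{C\eps\theta^2}{R^2} - \eps$, so $L_t + A|DL|^p - \eps m^-(D^2L) + \eps = -\frac{C\eps\theta^2}{R^2} < 0$, which is what we need. On the transition region $3/4 \leq \phi \leq 1$, we have $|b'(\phi)| \leq \|b'\|_\infty =: M_1$ and $|b''(\phi)| \leq M_2$, both dimensional (depending only on the fixed choice of $b$). There, $L_t \leq \frac{\theta}{4} b'(\phi)$ (dropping the negative constants only makes the bound worse, so actually keep them: $L_t \leq \frac{\theta}{4}\cdot 0 - \eps = -\eps$ using $b' \le 0$... but that wastes the $b'$ term). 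More carefully: $A|DL|^p = A \frac{\theta^p}{R^p} |b'(\phi)|^p$ and $-\eps m^-(D^2 L) \leq \eps \frac{C_0\theta}{R^2}$. Combining,
\[
L_t + A|DL|^p - \eps m^-(D^2L) + \eps \leq \frac{\theta}{4} b'(\phi) + A\frac{\theta^p}{R^p}|b'(\phi)|^p + \eps\frac{C_0\theta}{R^2} - \frac{C\eps\theta^2}{R^2}.
\]
I want the right-hand side $\leq 0$. The dangerous term is $A\frac{\theta^p}{R^p}|b'(\phi)|^p$, which is positive; I control it against $\frac{\theta}{4}b'(\phi)$ by noting $|b'(\phi)|^p \leq M_1^{p-1}|b'(\phi)|$, so $A\frac{\theta^p}{R^p}|b'(\phi)|^p \leq A\frac{\theta^p M_1^{p-1}}{R^p}|b'(\phi)| = \frac{A\theta^p M_1^{p-1}}{R^p}|b'(\phi)|$; then $\frac{\theta}{4}b'(\phi) + \frac{A\theta^p M_1^{p-1}}{R^p}|b'(\phi)| = -|b'(\phi)|\left(\frac{\theta}{4} - \frac{A\theta^p M_1^{p-1}}{R^p}\right) \leq 0$ provided $\theta^{p-1} \leq \frac{R^p}{4AM_1^{p-1}}$, i.e. $\theta$ small depending on $p, A, R$. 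Finally the leftover $\eps\frac{C_0\theta}{R^2} - \frac{C\eps\theta^2}{R^2} = \frac{\eps\theta}{R^2}(C_0 - C\theta) \leq 0$ once $C \geq C_0/\theta$ — but that reverses the quantifier order. Instead I would choose $C$ large first (so $C > C_0$... no). The clean fix: take $C$ large enough that $C \theta \ge C_0$ fails unless... Actually the intended order in the statement is "$C$ large and $\theta$ small depending on $C$": fix $C := 2C_0$, then need $2C_0\theta \geq C_0$, i.e. $\theta \geq 1/2$ — wrong direction. So rather I keep $C_0\theta - C\theta^2 \le 0 \iff C\theta \ge C_0$; with $C$ to be chosen large and $\theta$ potentially tiny this fails. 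The resolution is that the $-\eps t$ term (not $-\frac{C\eps\theta^2}{R^2}t$) absorbs it: $\eps\frac{C_0\theta}{R^2} - \eps \leq 0$ once $\theta \leq R^2/C_0$, which is fine. Then $C$ only needs to be positive (it buys extra room), consistent with "$C$ large."

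The main obstacle I anticipate is bookkeeping the order of quantifiers among $C$, $\theta$, $\eps$, $R$ so there is no circular dependence — the statement fixes $R$ externally and then asks for $C$ large, $\theta$ small; I must make sure every smallness condition on $\theta$ depends only on $p, A, R$ and the fixed profile $b$ (hence dimensional constants $M_1, M_2, C_0$), and that $\eps$ never needs to be constrained here at all (indeed $L$ is a subsolution for every $\eps > 0$, since $\eps$ appears multiplied by $t \ge 0$ in terms with the right sign). A secondary technical point is the behavior of $m^-(D^2 L)$ near $x = 0$: there $|x|^{-1}$ blows up, but $b'(\phi)$ and $b''(\phi)$ vanish identically for $|x| < R/4$ (since then $\phi < 1/4 + t/4 \le 1/2 < 3/4$), so $L \equiv \theta - \frac{C\eps\theta^2}{R^2}t - \eps t$ is smooth there and the singularity is spurious; I would remark on this explicitly. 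I should also confirm $L$ is genuinely $C^2$ (so the pointwise computation suffices and no viscosity subtleties arise) — this holds as long as $b \in C^2$ and we avoid $x=0$, which we just did.
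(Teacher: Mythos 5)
Your computational setup is correct and essentially identical to the paper's (direct computation of $L_t$, $DL$, $D^2L$, then checking the inequality pointwise on the support of $b'$; on the complement $b$ is locally constant, so everything degenerates to the strictly negative constant). Your control of $A|DL|^p$ against $\tfrac{\theta}{4}b'$ by taking $\theta^{p-1}\leq R^p/(4A\|b'\|_\infty^{p-1})$ is also exactly what the paper does. However, the final bookkeeping of the $\eps$-terms contains a genuine error, and your proposed ``resolution'' does not work.

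Concretely: in your display you have already written
\[
L_t + A|DL|^p - \eps m^-(D^2L) + \eps \;\leq\; \tfrac{\theta}{4} b'(\phi) + A\tfrac{\theta^p}{R^p}|b'(\phi)|^p + \eps\tfrac{C_0\theta}{R^2} - \tfrac{C\eps\theta^2}{R^2}.
\]
To get this, you used that $L_t$ contributes $-\eps$ (from the $-\eps t$ term) and that this $-\eps$ exactly cancels the $+\eps$ appearing in the subsolution inequality. Having spent that $-\eps$, you cannot reuse it to absorb $\eps\tfrac{C_0\theta}{R^2}$; the only term left that can absorb it is $-\tfrac{C\eps\theta^2}{R^2}$, which forces $C\theta\geq C_0$. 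Your suggestion ``the $-\eps t$ term absorbs $\eps C_0\theta/R^2$'' double-counts the negative budget.

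The constraint $C\theta\geq C_0$ that you initially derived (and then wrongly rejected) is in fact the correct and harmless one. There is no forced quantifier order requiring $C$ to be fixed before $\theta$: choose $\theta$ first so that $\theta^{p-1}\leq R^p/(4A\|b'\|_\infty^{p-1})$ --- this depends only on $p$, $A$, $R$ and the fixed profile $b$ --- and then set $C := C_0/\theta$ (or larger). Since $\theta$ depends only on $p$, $A$, $R$ and dimension, so does $C$, exactly as the lemma asserts. The downstream use in Lemma~\ref{l:lowerhalf-improv} is unaffected: one needs $\tfrac{C\eps\theta^2}{R^2} + \eps \leq \theta/2$, and with $C=C_0/\theta$ this reads $\eps(1 + C_0\theta/R^2)\leq\theta/2$, which holds for $\eps$ small.

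Two further remarks. First, your computed bound $m^-(D^2L)\geq -C_0\theta/R^2$ (linear in $\theta$) is the correct one; the power $\theta^2$ that appears at this spot in the paper's own proof looks like a typographical slip, and your derivative calculation is the right guide. Second, you are right that $\eps$ needs no constraint in this lemma (the $\eps$-dependent terms in $L$ are built precisely so the statement holds for every $\eps>0$), and that the apparent singularity of $D^2L$ at $x=0$ is spurious because $b'$, $b''$ vanish identically for $|x|\leq R/2$ and $t\in[0,1]$.
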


\begin{proof}
We just compute
\begin{align*}
L_t  &=  (\theta/4) b'(|x|/R+t/4) -\frac{C \eps \theta^2}{R^2} - \eps \\
|DL| &= (\theta/R) |b'(|x|/R+t/4)| \\
m^- (D^2 L) &\geq - \frac{\theta^2}{R^2}( 2\|b'\|_\infty+\|b''\|_\infty/R)
\end{align*}
So, if we choose $C=  ( 2\|b'\|_\infty+\|b''\|_\infty/R)$  we obtain
\begin{align*}
L_t + A |DL|^p - \eps m^- (D^2L) + \eps &\leq (\theta/4) b' + \frac{A \theta^p}{R^p} |b'|^p \\
&\leq (\theta/4) |b'| (-1 + 4A \theta^{p-1}R^{-p} |b'|^{p-1})
\end{align*}
and the right hand side is non positive if we choose $\theta^{p-1}$ smaller than $R^p/(4 A \max|b'|^{p-1})$.
\end{proof}

Now we prove the other half of the improvement of oscillation. If the function $u$ is not small at any point on $\{t=0\}$ then it will not be small anywhere in a smaller cylinder. We only assume $u$ to be a supersolution here.

\begin{lemma} \label{l:lowerhalf-improv}
Let $u$ be a supersolution of 
\[ u_t + A |Du|^p - \eps m^-(D^2 u) + \eps \geq 0 \text{ in } B_{R} \times [0,1]\]
such that $u \geq \theta$ in $B_R \times \{0\}$ (for $\theta$ as in Lemma \ref{l:subsolution}) and $u \geq 0$ everywhere in that cylinder. If $\eps$ is small enough ($\eps < 1/(2c)$ for $c$ as in Lemma \ref{l:subsolution}) and $\eps < \theta/2$, then $u \geq \theta/2$ in $B_{R/2} \times [1/2,1]$.
\end{lemma}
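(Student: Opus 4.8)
The proof is a barrier argument: I would bound $u$ from below by the explicit subsolution $L$ constructed in Lemma~\ref{l:subsolution}, using the same $R$, the same large $C$ and small $\theta$, and letting $\eps$ satisfy in addition the two smallness conditions in the present statement. Before comparing, I would note that the operator $w\mapsto w_t+A|Dw|^p-\eps\,m^-(D^2w)+\eps$ is (degenerate) parabolic: since $m^-(X)=\min(\lambda_{\min}(X),0)$ is nondecreasing in $X$ for the Loewner order and there is no zeroth--order dependence, the comparison principle holds for continuous viscosity sub/supersolutions on the bounded cylinder $Q:=B_R\times[0,1]$.

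Next I would check the ordering $L\le u$ on the parabolic boundary $\partial_p Q=(\overline{B_R}\times\{0\})\cup(\partial B_R\times(0,1])$. On the bottom, $L(x,0)=\theta\,b(|x|/R)\le\theta\le u(x,0)$, using $b\le1$ and the hypothesis $u\ge\theta$ on $\overline{B_R}\times\{0\}$ (the latter by continuity of $u$). On the lateral part, for $t>0$ the argument $|x|/R+t/4=1+t/4$ exceeds $1$, so $b$ vanishes and $L(x,t)=-C\eps\theta^2R^{-2}t-\eps t\le0\le u(x,t)$, using $u\ge0$. Hence $L\le u$ on $\partial_p Q$, and since $L$ is a subsolution in $\R^d\times(0,1)$ (a fortiori in $Q$) while $u$ is a supersolution in $Q$, the comparison principle yields $L\le u$ throughout $Q$.

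Finally I would read off the conclusion. For $|x|\le R/2$ and $0\le t\le1$ the argument $|x|/R+t/4$ is at most $\tfrac12+\tfrac14=\tfrac34$, so $b\equiv1$ there and $L(x,t)=\theta-C\eps\theta^2R^{-2}t-\eps t\ge\theta-C\eps\theta^2R^{-2}-\eps$. Under the smallness assumptions on $\eps$ (namely $\eps<1/(2c)$ for the constant of Lemma~\ref{l:subsolution}, which forces $C\eps\theta^2R^{-2}<\theta/2$ because $\theta<1$, together with $\eps<\theta/2$; shrinking $\eps$ slightly more if needed one makes the two subtracted terms total at most $\theta/2$) we obtain $L\ge\theta/2$ on $B_{R/2}\times[1/2,1]$, hence $u\ge L\ge\theta/2$ there.

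I do not expect a genuine obstacle. The two points that deserve care are (i) the applicability of the comparison principle to this degenerate second--order operator on a bounded cylinder, which is standard precisely because the zeroth--order term is absent and $m^-$ is monotone in the matrix variable, and (ii) ensuring the selected $\eps$ is simultaneously admissible in Lemma~\ref{l:subsolution} and small enough for the last inequality; both are arranged by taking $\eps$ below a single explicit threshold depending only on $A$, $p$, $R$ and the dimension.
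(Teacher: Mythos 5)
Your proposal is correct and follows essentially the same route as the paper: compare $u$ with the explicit subsolution $L$ from Lemma~\ref{l:subsolution}, check $L\le u$ on the parabolic boundary of $B_R\times[0,1]$, invoke comparison, and then read off $L\ge\theta/2$ on $B_{R/2}\times[1/2,1]$. Your version merely makes explicit two points the paper leaves implicit (the degenerate ellipticity of $w\mapsto w_t+A|Dw|^p-\eps\,m^-(D^2w)+\eps$ needed for comparison, and the lateral/bottom boundary check), and you correctly take $t\le 1$ in the final bound, whereas the paper's display uses $t\le 1/2$ (a harmless slip absorbed by "$\eps$ small enough"); note also the paper's phrase "the supersolution $L$" is a typo for "subsolution."
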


\begin{proof}
We compare the function $u$ with the supersolution $L$ of Lemma \ref{l:subsolution}. We observe that $u \geq L$ in the parabolic boundary of the cylinder, therefore $u \geq L$ everywhere. Therefore, for $(x,t) \in B_{R/2} \times [1/2,1]$,
\begin{align*}
u(x,t) &\geq L(x,t) =  \theta b\left(\frac{|x|}{R} + \frac{t}{4}\right) -  \frac{c \eps \theta^2}{R^2} t - \eps t \\
&\geq \theta -  \frac{c \eps \theta^2}{2R^2}  - \frac{\eps}{2} \, \geq \; \frac{\theta}{2} 
\end{align*}for $\eps$ small enough. 
\end{proof}

Now we can combine Lemmas \ref{l:upperhalf-improv} and \ref{l:lowerhalf-improv} to obtain the full improvement of oscillation result.

\begin{lemma} \label{l:improvement-of-oscillation}
Assume that in $B_{R+r} \times [0,1]$ the function $u$ satisfies the two inequalities
\begin{align*}
u_t + A |Du|^p - \eps m^-(D^2 u) + \eps &\geq 0 \\
u_t + \frac 1 A |Du|^p - \eps m^+(D^2 u) - \eps &\leq 0  
\end{align*}
Assume also that the oscillation of $u$ in $B_{R+r} \times [0,1]$ is less or equal to $1$. If $\eps$ is small enough, then the oscillation of $u$ in $B_{R/2} \times [1/2,1]$ is less or equal to $1-\theta$, for constants $\theta$, $r$ and $R$ depending only on $A$, $p$ and dimension.
\end{lemma}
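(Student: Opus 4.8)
\section*{Proof proposal for Lemma \ref{l:improvement-of-oscillation}}

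The plan is to run a dichotomy on the bottom slice $B_R\times\{0\}$ and invoke Lemmas \ref{l:upperhalf-improv} and \ref{l:lowerhalf-improv} in the two cases. First I would normalize: set $m=\inf_{B_{R+r}\times[0,1]}u$ and work with $v=u-m$. Since $\osc_{B_{R+r}\times[0,1]}u\le 1$, we have $0\le v\le 1$ there, and because the two viscosity inequalities involve only $u_t$, $Du$ and $D^2u$, the function $v$ satisfies exactly the same pair of inequalities as $u$. It therefore suffices to prove $\osc_{B_{R/2}\times[1/2,1]}v\le 1-\theta$ (after possibly shrinking $\theta$).

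Next comes the choice of constants, which must be made in a fixed order to avoid circularity. I would first pick $R$ (and hence $r=(1/C)^{1/p'}$) small enough for Lemma \ref{l:upperhalf-improv}; crucially, the smallness of $R$ there does not refer to $\theta$. With $R$ now fixed, pick $\theta\in(0,1/4)$ small enough to satisfy the hypothesis of Lemma \ref{l:subsolution} (used inside Lemma \ref{l:lowerhalf-improv}). Finally pick $\eps$ small enough to meet simultaneously the requirements of Lemma \ref{l:upperhalf-improv} and of Lemma \ref{l:lowerhalf-improv} (i.e.\ $\eps<1/(2c)$ and $\eps<\theta/2$, together with the smallness needed there). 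Since each smallness condition constrains only constants fixed at an earlier stage, the constants close up.

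Now the dichotomy. \emph{Case 1: there is a point $y_0\in B_R$ with $v(y_0,0)\le\theta$.} Then $v$ meets the hypotheses of Lemma \ref{l:upperhalf-improv} (it is a subsolution of $v_t+\frac1A|Dv|^p-\eps m^+(D^2v)-\eps\le 0$, one has $0\le v\le 1$, and $v\le\theta$ somewhere on $B_R\times\{0\}$), so $v\le 1-\theta$ on $B_R\times[1/2,1]\supset B_{R/2}\times[1/2,1]$; together with $v\ge 0$ this gives $\osc_{B_{R/2}\times[1/2,1]}v\le 1-\theta$. \emph{Case 2: $v(y,0)\ge\theta$ for every $y\in B_R$.} Then $v$ meets the hypotheses of Lemma \ref{l:lowerhalf-improv} (it is a supersolution of $v_t+A|Dv|^p-\eps m^-(D^2v)+\eps\ge 0$ on $B_R\times[0,1]$, one has $v\ge 0$, and $v\ge\theta$ on $B_R\times\{0\}$), so $v\ge\theta/2$ on $B_{R/2}\times[1/2,1]$; with $v\le 1$ this gives $\osc_{B_{R/2}\times[1/2,1]}v\le 1-\theta/2$. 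Taking the improvement constant in the statement to be $\theta/2$ (and then relabelling it $\theta$) covers both cases.

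I do not expect a serious obstacle: the two sub-lemmas do all the analytic work, and what remains is bookkeeping. The only point that genuinely needs care is the ordering of the quantifiers on $R$, $\theta$, $\eps$ described above — one must check that Lemma \ref{l:upperhalf-improv} requires $R$ small in a way that is independent of $\theta$, and that the smallness of $\theta$ produced by Lemma \ref{l:subsolution} does not feed back to force a still smaller $R$. A secondary, trivial point is that all hypotheses of the sub-lemmas are posed on cylinders ($B_{R+r}\times[0,1]$, resp.\ $B_R\times[0,1]$) contained in the cylinder where $u$ is assumed to satisfy the equations, so restricting the sub/supersolution property causes no issue.
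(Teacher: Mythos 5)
Your proof is correct and follows the paper's own argument essentially verbatim: same normalization by the infimum, same order of fixing $r$, $R$, $\theta$, $\eps$, and the same dichotomy via Lemmas \ref{l:upperhalf-improv} and \ref{l:lowerhalf-improv}. Your final remark that the improvement factor should be taken as $\theta/2$ to cover Case 2 is the right small detail that the paper leaves implicit.
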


\begin{proof}
We start by defining
\[v(x,t) = u(x,t) - \min_{B_{R+r} \times [0,1]} u\]
so that $v$ satisfies the same equations as $u$ but $0 \leq v \leq 1$.

We choose the value of $r$ from Corollary \ref{c:semi-LaxOleinik} and the value of $R$ from Lemma \ref{l:upperhalf-improv} (which is independent of $\theta$). Then, for that value of $R$, we use the value of $\theta$ from  Lemma \ref{l:subsolution}. We choose $\eps$ small enough so that all lemmas apply.

If $v(y,0) \leq \theta$ for some point $y \in B_R$, we apply Lemma \ref{l:upperhalf-improv} and obtain $v \leq 1-\theta$ in $B_R \times [1/2,1]$. Otherwise we apply Lemma \ref{l:lowerhalf-improv} and obtain $v \geq \theta/2$ in $B_{R/2} \times [1/2,1]$.
\end{proof}

We rephrase Lemma \ref{l:improvement-of-oscillation} to a more convenient form.

\begin{cor} \label{c:oscillation-2ndorder}
Assume that in $B_1 \times [-1,0]$ the function $u$ satisfies the two inequalities
\begin{align}
u_t + A |Du|^p - \eps m^-(D^2 u) + \eps &\geq 0 \label{leq:ineq1}\\
u_t + \frac 1 A |Du|^p - \eps m^+(D^2 u) - \eps &\leq 0   \label{leq:ineq2}
\end{align}
Assume also $\osc_{B_1 \times [-1,0]} u \leq 1$. There is a $\lambda>0$ (depending only on $A$, $p$ and dimension), such that if $\eps$ is small enough, $\osc_{B_\lambda \times [-\lambda^{p/2},0]} u \leq 1$ is less or equal to $1-\theta$.
\end{cor}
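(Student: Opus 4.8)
The plan is to reduce Corollary \ref{c:oscillation-2ndorder} to Lemma \ref{l:improvement-of-oscillation} by a parabolic rescaling, exactly as Corollary \ref{c:oscillation-1storder} was obtained from Lemma \ref{l:diminish-of-oscillation}. The point is that Lemma \ref{l:improvement-of-oscillation} is stated on the fixed cylinder $B_{R+r}\times[0,1]$ with $R,r$ determined by $A$, $p$ and the dimension, whereas here we want the conclusion phrased on a unit cylinder $B_1\times[-1,0]$ and a contracted cylinder $B_\lambda\times[-\lambda^{p/2},0]$. So I would set $\rho = R+r$ (which is $\geq 1$ after possibly shrinking, or we just use $\rho=\max(2,R+r)$ to be safe) and introduce the rescaled function
\[
\tilde u(x,t) = u\!\left(\rho^{-1}x,\ \rho^{-p}(t+\rho^p)\right) - \min_{B_1\times[-1,0]} u ,
\]
which shifts the time interval $[-1,0]$ up to $[0,1]$ after the dilation and normalizes the minimum to $0$. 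By the $L^\infty$-scaling computation in Section \ref{s:scaling} (the choice $c=1$, $b=a^p$, $a=\rho^{-1}<1$), $\tilde u$ satisfies
\begin{align*}
\tilde u_t + A|D\tilde u|^p - \eps\,\rho^{-(p-2)} m^-(D^2\tilde u) + \eps\,\rho^{-p} &\geq 0,\\
\tilde u_t + \tfrac1A|D\tilde u|^p - \eps\,\rho^{-(p-2)} m^+(D^2\tilde u) - \eps\,\rho^{-p} &\leq 0,
\end{align*}
on $B_\rho\times[0,\rho^p]\supset B_{R+r}\times[0,1]$, and $\osc_{B_{R+r}\times[0,1]}\tilde u \leq \osc_{B_1\times[-1,0]} u \leq 1$. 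Since $\rho\geq 1$ and $p>2$ we have $\rho^{-(p-2)}\leq 1$ and $\rho^{-p}\leq 1$, so $\eps\rho^{-(p-2)}$ and $\eps\rho^{-p}$ are both $\leq\eps$; hence if the original $\eps$ is small enough the hypotheses of Lemma \ref{l:improvement-of-oscillation} hold for $\tilde u$ (note the diffusion coefficient and the right-hand side are only made smaller by the rescaling, which is the content of Section \ref{s:scaling}).

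Applying Lemma \ref{l:improvement-of-oscillation} to $\tilde u$ gives $\osc_{B_{R/2}\times[1/2,1]}\tilde u \leq 1-\theta$. Now I translate this back: $\osc_{B_{R/2}\times[1/2,1]}\tilde u = \osc_{Q}u$ where $Q = B_{R/(2\rho)}(0)\times[\ \rho^{-p}(1/2)-1,\ \rho^{-p}-1\ ]$, a cylinder sitting below $t=0$ of spatial radius $R/(2\rho)$ and time-length $\rho^{-p}/2$. To conclude I choose $\lambda$ small enough (depending only on $A$, $p$, $d$) so that $B_\lambda\times[-\lambda^{p/2},0]\subseteq Q$: it suffices that $\lambda \leq R/(2\rho)$ and $\lambda^{p/2}\leq \rho^{-p}/2$, i.e. $\lambda \leq \min\big(R/(2\rho),\ (2\rho^p)^{-2/p}\big)$; such a $\lambda$ exists because $R$ and $\rho$ are themselves controlled by $A$, $p$, $d$. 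Then $\osc_{B_\lambda\times[-\lambda^{p/2},0]}u \leq \osc_Q u \leq 1-\theta$, which is the claim. (Here I am using the elementary fact that oscillation is monotone under inclusion of sets and invariant under the affine change of variables.)

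There is essentially no hard step; the only thing requiring care is bookkeeping in the rescaling — making sure the time shift is applied in the right order relative to the dilation so that $[-1,0]$ lands on $[0,1]$, and confirming via Section \ref{s:scaling} that the coefficient $\eps$ in front of $m^\pm(D^2u)$ and the constant right-hand side genuinely shrink (rather than grow) under zooming in, which needs $p>2$ for the second-order term. One should also double-check that $\lambda$ as chosen indeed makes the contracted cylinder land inside the region $B_{R/2}\times[1/2,1]$ after undoing the change of variables, but since all the relevant radii and the exponent $p$ depend only on $A$, $p$ and $d$, the final $\lambda$ does too, matching the statement. I would therefore present the proof as a two-line rescaling argument citing Lemma \ref{l:improvement-of-oscillation} and Section \ref{s:scaling}, in close parallel to the proof of Corollary \ref{c:oscillation-1storder}.
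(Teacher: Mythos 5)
Your proposal is essentially the same approach as the paper: both reduce Corollary \ref{c:oscillation-2ndorder} to Lemma \ref{l:improvement-of-oscillation} by a change of variables. The difference is that the paper's proof is substantially shorter because it exploits the fact that $R$ and $r$ were \emph{chosen small} in the preceding lemmas --- so one may assume $R+r<1$ --- and then the only change of variables needed is the time-translation $u(x,t)\mapsto u(x,t-1)$, with no spatial rescaling at all; the contracted cylinder is taken to be $B_\lambda\times[-\lambda^{p/2},0]$ with $\lambda=R/2$, and $\lambda^{p/2}<1/2$ holds automatically since $R<1$ and $p>2$. Your version with $\rho=\max(2,R+r)$ is therefore an unnecessary detour: since $R+r<1<\rho$, the dilation only shrinks things further, so the argument still goes through, but you are essentially redoing the ``$L^\infty$ rescaling'' step that Section \ref{s:scaling} performs once and for all, and that the proof of Theorem \ref{t:main2} already handles when it sets $\tilde u(x,t)=u(\rho x,\rho^p t)$ to make $\eps$ small at the outset.

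There is also a sign slip in your time shift: with $u$ defined on $B_1\times[-1,0]$, to have $\tilde u$ defined on a cylinder containing $B_{R+r}\times[0,1]$ you need the time argument to carry $[0,\rho^p]$ onto $[-1,0]$, i.e.\ $\tilde u(x,t)=u(\rho^{-1}x,\rho^{-p}t-1)=u\big(\rho^{-1}x,\rho^{-p}(t-\rho^p)\big)$, not $\rho^{-p}(t+\rho^p)$. Your formula $\rho^{-p}(t+\rho^p)=\rho^{-p}t+1$ maps $[0,\rho^p]$ onto $[1,2]$, which lies outside the domain of $u$. After this correction, the change of variables indeed sends $B_{R/2}\times[1/2,1]$ (in the $\tilde u$-variables) to $B_{R/(2\rho)}\times[-1/2,-\rho^{-p}/2]$ shifted appropriately; the remaining inclusion check for $B_\lambda\times[-\lambda^{p/2},0]$ is as you describe. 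Finally, the parenthetical ``$\rho=R+r$ (which is $\geq 1$ after possibly shrinking)'' is backwards --- shrinking $R,r$ makes $R+r$ smaller, not larger --- but your fallback $\rho=\max(2,R+r)$ sidesteps this and is the one that should be kept.
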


\begin{proof} 
Just apply Lemma \ref{l:improvement-of-oscillation} to $u(x,t-1)$. Recall that the constants $R$ and $r$ were chosen small enough, so we can assume that $R+r < 1$ so that $B_{R+r} \subset B_1$ and we choose $\lambda = R/2$.
\end{proof}

We prove Theorem \ref{t:main2} following the procedure of Proposition \ref{p:scaleiteration}. This proof is identical to the proof of Theorem \ref{t:main1} in section \ref{sec:order1} except that we have to check the scaling of the second order term (for which we use the assumption $p>2$) and we apply the improvement of oscillation given by Corollary \ref{c:oscillation-2ndorder} instead of Corollary \ref{c:oscillation-1storder}.

\begin{proof}[Proof of Theorem \ref{t:main2}.]
Without loss of generality, we prove the estimate for $(x,t) = (0,0)$ and $(y,s)$ sufficiently close to $(0,0)$. Also without loss of generality we assume that $0 \leq u \leq 1$. Otherwise we consider $(u - \inf u)/(\osc u)$ instead, which also solves inequalities with \eqref{e:m1} and \eqref{e:m2} with a slightly modified $A$ (see section \ref{s:scaling}).

We start by scaling the equation so that the zeroth order term and the diffusion term are negligible. We consider $\tilde u(x,t) = u(\rho x, \rho^p t)$ which solves
\begin{align*}
\tilde u_t + A|D \tilde u|^p + \rho^p A - \rho^{p-2} A m^-(D^2 u) &\geq 0, \\ 
\tilde u_t + \frac 1A |D \tilde u|^p - \rho^p A - \rho^{p-2} A m^+(D^2 u) &\leq 0,
\end{align*}
We choose $\rho$ small enough so that $\rho^{p-2} A < \eps$, for the constant $\eps$ of Corollary \ref{c:oscillation-2ndorder}. Note that we can make the second order term negligible because $p>2$.

We now show that $\tilde u$ satisfies the iterative hypothesis of Proposition \ref{p:scaleiteration}. Let $\alpha\in(0, \frac{p}{2(p-1)})$ be a small positive constant that will be chosen below and set $Q_r=B_r(0)\times[-r^{p-\alpha(p-1)}, 0]$. Suppose that $\osc_{Q_r} \tilde u \leq r^\alpha$. Let $\tilde u^r(x,t) = r^{-\alpha} \tilde u(rx,r^{p-\alpha(p-1)}t)$. The function $\tilde u^r$ satisfies $\osc_{Q_1} \tilde u^r \leq 1$ and the inequalities
\begin{align*}
\tilde u^r_t + A|D \tilde u^r|^p + r^{p(1-\alpha)} \rho^p A - (r\rho)^{p-2} A m^-(D^2 \tilde u^r) &\geq 0, \\ 
\tilde u^r_t + \frac 1A |D \tilde u^r|^p - r^{p(1-\alpha)} \rho^p A  - (r\rho)^{p-2} A m^+(D^2 \tilde u^r) &\leq 0.
\end{align*}
Note that $r^{p(1-\alpha)} \rho^p A \leq \rho^p A < \eps$ and $(r\rho)^{p-2} A \leq \rho^{p-2} A < \eps$. Thus, we can apply Corollary \ref{c:oscillation-2ndorder} and obtain $\osc_{B_\lambda \times [-\lambda^{p/2},0]} \tilde u^r \leq 1-\theta$. Let us choose $\alpha$ such that $\lambda^\alpha \geq 1-\theta$. Therefore we have
\[ \osc_{Q_{\lambda r}} \tilde u = r^\alpha \osc_{B_\lambda \times [-\lambda^{p-\alpha(p-1)},0]} \tilde u^r \leq  r^\alpha \osc_{B_\lambda \times [-\lambda^{p/2},0]} \tilde u^r \leq (1-\theta) r^\alpha \leq (\lambda r)^\alpha. \]
And thus, we proceed with Proposition \ref{p:scaleiteration}. 
\end{proof}

\section{Second order equations with unbounded right hand side}\label{sec:unbounded}

In this section we prove Theorem \ref{t:intro3} about Hamilton-Jacobi equations with an unbounded right hand side.

Note that, with the assumptions of Theorem \ref{t:intro3}, $f$ is actually bounded from below by $-A$. An example of a situation in which the theorem is applicable is for hamiltonians $H$ of the form $H(x,t,q,X)=H_1(x,t,X)+H_2(x,t,q)$ with 
\[
H_1(x,t,X)= \sup_{a\in A} \left\{ -\frac12 {\rm Tr} \left(\sigma\sigma^*(x,t,a)X\right)\right\}
\; {\rm and} \; 
\frac 1 A |q|^p  - f(x,t) \leq H_2(x,t,q) \leq A |q|^p+ A
\]
where $\sup_{t,a} \|\sigma(\cdot,t,a)\|_{{\mathcal C}^1}$ is bounded  and $f$ is as above. Such Hamiltonian are often encountered in stochastic control. 

Theorem \ref{t:intro3} is a consequence of the following more precise statement.

\begin{thm}\label{t:mainUnbd}
Let $u$ be a continuous function which satisfies the following two inequalities in the viscosity sense
\begin{align}
u_t + A|Du|^p -A m^-(X) \geq & \; -A \qquad \text{and}\label{e:m1i}\\
u_t + \frac 1 A |Du|^p-{\rm Tr}(B(x,t)D^2u)  \leq   &\; f(x,t) \label{e:m2i}
\end{align}
in $B_1\times (-1,0]$. Then 
$$
|u(x,t)-u(y,s)|\leq C \left[ |x-y|^\alpha+|t-s|^{\frac{\alpha}{p-\alpha(p-1)}}\right] \qquad \forall (x,t),(y,s)\in B_{1/2}\times [-1/2,0]
$$
for some $\alpha \in (0, p')$ (where $1/p+1/p'=1$) and $C$ depending on $\sup_t\|B(\cdot,t)\|_{{\mathcal C}^1}$, $\|f\|_{L^m(B_1\times (0,1))}$, 
$A$, $p$, $\|u\|_\infty$ and dimension. 

\end{thm}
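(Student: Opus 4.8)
The plan is to follow the same architecture as the proofs of Theorems \ref{t:main1} and \ref{t:main2}: establish an improvement-of-oscillation lemma at unit scale and then iterate it at all scales via Proposition \ref{p:scaleiteration}, using the ${\mathcal C}^\alpha$ scaling from Section \ref{s:scaling}. The new difficulty is the unbounded right-hand side $f \in L^m$ with $m > 1 + d/p$, and the fact that the diffusion matrix $B(x,t)$ on the subsolution side is genuine (not an extremal operator), though it is ${\mathcal C}^1$ in $x$. As in Section \ref{sec:order2}, after an initial $L^\infty$-rescaling $\tilde u(x,t)=u(\rho x,\rho^p t)$ we may assume the constant-in-$f$ part, the $\|B\|_{{\mathcal C}^1}$-terms, and $\|f\|_{L^m}$ are all as small as we like (the scaling computation in Section \ref{s:scaling} shows $\|f\|_{L^m}$ scales by $a^{p(1-1/m)-d/m}$ with positive exponent since $p(m-1)>d$), and that $\osc u \le 1$. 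So it suffices to prove: if $u$ solves the two inequalities with $\eps$-small coefficients and small $\|f\|_{L^m}$ on $B_{R+r}\times[0,1]$ and $\osc u \le 1$, then $\osc u \le 1-\theta$ on $B_{R/2}\times[1/2,1]$.

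The two-case dichotomy is the same: either $u$ is small ($\le\theta$) at some point of $B_R\times\{0\}$, or $u\ge\theta$ on all of $B_R\times\{0\}$. The second case is handled exactly as in Lemma \ref{l:lowerhalf-improv}: one compares $u$ from below with the explicit subsolution $L$ of Lemma \ref{l:subsolution} — note that $L$ was built against $-\eps m^-(D^2L)$, and since $B\ge 0$ we have $\tr(B D^2 L)\le \|B\|_\infty\, m^+(D^2 L)\cdot d$-type bounds, but more simply $L$ is concave-ish near its support boundary; in any case $-\tr(B D^2 L)$ can be absorbed into the $C\eps\theta^2/R^2$ term since $\|B\|$ is small and $\|D^2 L\|\lesssim \theta/R^2$, and the $-A$ right-hand side is dominated by the $-\eps t$ term. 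The first case is where the work lies: I need a supersolution that "sees a point" for the equation $u_t+\frac1A|Du|^p-\tr(B(x,t)D^2u)\le f$. I would reuse the supersolution $U(x,t)=Ct^{-1/(p-1)}(|x|^2+\eta t)^{p'/2}$ of Corollary \ref{c:supersolution}, which has a margin because $p>2$, and then correct for the two perturbations: the $B$-diffusion and the $L^m$ right-hand side.

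For the $B$-term: since $p>2$ the diffusion is a lower-order effect at small scales, and $\tr(B D^2 U) \le \|B\|_\infty\, \mathrm{tr}_+(D^2 U)$, which by the computation in Lemma \ref{l:supersolution} is of the same form $\le C\|B\|_\infty t^{-1/(p-1)}(|x|^2+t)^{p'/2}\cdot c'''_p\,\tfrac{t}{|x|^2+t}$, hence absorbed into the strictly positive leading term exactly as $\eps m^+(D^2U)$ was, provided $\|B\|_\infty$ is small (guaranteed by the initial rescaling). The ${\mathcal C}^1$-regularity of $B$ is needed only to make sense of $\tr(B D^2 U)$ in the viscosity comparison and possibly to commute $B$ past the translation $y\mapsto x-y$ in the "min of translated supersolutions" argument of Corollary \ref{c:semi-LaxOleinik} — I expect a harmless error term controlled by $\|DB\|_\infty$. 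For the $f$-term, the right move is an Alexandrov–Bakelman–Pucci / Krylov-type estimate: the inhomogeneity $f$ perturbs the supersolution bound by a quantity controlled by $\|f\|_{L^m(B_{R+r}\times[0,1])}$ with $m>1+d/p$ — precisely the exponent making the parabolic ABP estimate work on a cylinder of the relevant shape — so one adds a correction term $w$ solving an auxiliary inequality with the $|Du|^p$ coercivity, giving $u(x,t)\le \min_{y\in B_R} u(y,0)+U(x-y,t)+C\|f\|_{L^m}$, and smallness of $\|f\|_{L^m}$ closes the estimate as in Lemma \ref{l:upperhalf-improv}.

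I expect the main obstacle to be the rigorous treatment of the unbounded $f$ in the comparison/ABP step: one cannot simply plug $f$ into an explicit formula, so the cleanest route is probably to first reduce, via the coercive term $\frac1A|Du|^p$, to controlling $u$ by a solution of a uniformly parabolic problem with right-hand side $f$ (the $|Du|^p$ term lets one dominate gradient growth and effectively replace the equation by $u_t - \eps\Delta u \le |f| + C$ up to constants on the set where the comparison is active), then invoke the parabolic ABP estimate $\sup u \le \sup_{\partial} u + C\|f\|_{L^m}$ valid for $m>1+d/p$ — this is where the exact hypothesis $m>1+d/p$ is consumed, matching the scaling exponent $p(m-1)>d$ used earlier. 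Once the point-seeing supersolution with the additive $C\|f\|_{L^m}$ correction is in hand, the rest — the dichotomy, the rephrasing into a $\lambda$-contraction corollary analogous to Corollary \ref{c:oscillation-2ndorder}, and the iteration via Proposition \ref{p:scaleiteration} with $\alpha$ chosen so that $\lambda^\alpha\ge 1-\theta$ and small enough that $r^{-\alpha(p-1)+p-2}<1$ and $\delta>0$ — goes through verbatim as in Section \ref{sec:order2}.
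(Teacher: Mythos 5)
The architecture you lay out (initial $L^\infty$ rescaling, two-case dichotomy, iteration via Proposition~\ref{p:scaleiteration}) matches the paper, and your treatment of the ``large at $t=0$'' case is correct: the supersolution side of \eqref{e:m1i} is exactly of the form handled by Lemmas~\ref{l:subsolution} and~\ref{l:lowerhalf-improv}, so nothing new is needed there. The gap is in your handling of the subsolution case.

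You propose to keep the explicit barrier $U(x,t)=Ct^{-1/(p-1)}(|x|^2+\eta t)^{p'/2}$ from Corollary~\ref{c:supersolution} and absorb $f$ via an additive correction controlled by a ``parabolic ABP estimate valid for $m>1+d/p$.'' This is precisely what the paper says \emph{cannot} be made to work: the Remark right after the statement explains that no explicit point-seeing supersolution with controllable second derivatives was found once $f$ is unbounded, even in the inviscid case, and that this forces a different method. Concretely, there are two problems with your plan. First, the exponent $m>1+d/p$ is not an ABP exponent; the parabolic Krylov--Tso/ABP estimate for $u_t-a^{ij}D_{ij}u\le f$ requires $f\in L^{d+1}$ and, crucially, uniform parabolicity. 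Here $B$ can be degenerate or identically zero, so no ABP mechanism controls $\sup u$ by $\|f\|_{L^m}$. The condition $p(m-1)>d$ comes instead from the coercive Hamiltonian term, not from the diffusion. Second, even if you constructed an auxiliary $w$ with $w_t+\frac{1}{2A}|Dw|^p\ge f$ and $\|w\|_\infty\lesssim\|f\|_{L^m}$ via the coercivity alone, the candidate barrier $\min_y u(y,0)+U(x-y,t)+w(x,t)$ is not usable: its verification as a supersolution of the full second-order inequality requires second-derivative control of $w$, which you do not have (this is literally the obstruction the paper records).

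What the paper does instead is not a comparison argument. It first regularizes the subsolution by sup-convolution and mollification (Lemma~\ref{l:approx}), using the $C^1$ bound on $B$ in an essential way to rewrite $-\tr(BD^2u)$ as $-\dv(BDu)$ plus a controlled error $C(\|B\|_{C^1}^{p'}+\|B\|_{C^1})$; the role of $B\in C^1$ is therefore not merely to ``commute $B$ past translations'' as you suggest. Then it tests the smoothed PDE against a moving mollifier $\varphi(z,s)$ that tracks the straight segment from $(y,0)$ to $(x,t)$ and shrinks to a Dirac at the endpoints, and integrates the resulting ODE for $\rho(s)=\int\varphi\,u$ in time. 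Young's inequality against the coercive $|Du|^p$ dominates the transport and commutator terms, and $f$ is handled by H\"older's inequality: $\int_0^t\int\varphi g\le\|g\|_m\|\varphi\|_{L^{m'}}$, with the time integrability of $\|\varphi(\cdot,s)\|_{m'}^{m'}\sim\xi_\delta(s)^{d(1-m')}$ holding exactly when one can choose $\beta\in(1/p,(m-1)/d)$, i.e.\ when $p(m-1)>d$. That is where the hypothesis on $m$ is consumed. Your proposal does not contain this direct-integral step, and the step it substitutes (ABP) fails for degenerate diffusion, so the proposal as written has a genuine gap in the subsolution case.
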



\begin{remark}
We were not able to find a simple proof of Theorem \ref{t:mainUnbd} using an explicit construction of sub and supersolutions, like in the previous sections for the case of bounded right-hand side. In the inviscid case ($B \equiv 0$) it would be possible to build a supersolution of the form
\[ \begin{split}
U(x,t) = C \min_{s<t, y} \bigg( &s^{-\frac 1 {p-1}} |y|^{p'} + (t-s)^{-\frac 1 {p-1}} |x-y|^{p'}  \\ &+ \int_0^s f\left( \frac r t y , r \right) \dd r + \int_s^t f\left( y + \frac {r-s} {t-s} (x-y) , r \right) \dd r \bigg),
\end{split} \]
which could be shown to be bounded by $||f||_{L^m}$, and then we could proceed like in section \ref{sec:order1}. However, it seems very difficult to find an estimate on the second derivative of such function $U(x,t)$, and therefore we need a different method in order to be able to handle equations with bounded right-hand side and second order diffusion. We are not able to handle diffusions as general as in the bounded coefficient case. In fact we currently do not know if the result of Theorem \ref{t:mainUnbd} would hold true if the inequality \eqref{e:m2i} was replaced by
\[u_t + \frac 1 A |Du|^p - A m^+(D^2u)  \leq f(x,t).\]
\end{remark}

In order to  show some estimates on subsolutions, we need an approximation argument. We use here ideas from \cite{Is95}, \cite{PLL83} adapted to our context. 

\begin{lemma}\label{l:approx} Let $u$ be a continuous viscosity subsolution of
\[
u_t + \frac{1}{A} |Du|^p-{\rm Tr}(B(x,t)D^2u)  \leq    f(x,t) \qquad {\rm in }\; B_R\times (a,b)
\]
Then, for any $\eps>0$, we can find two sequences of smooth maps $(u^n)$ and $(f^n)$, which converge uniformly in $B_{R-\eps}\times(a+\eps,b-\eps)$ to $u$ and $f$ respectively, such that 
\[
u^n_t + \frac{1}{2A} |Du^n|^p-{\rm div}(B(x,t)Du^n)  \leq    f_n(x,t)+ C \left[ \|B\|_{{\mathcal C}^1}^{p'}+\|B\|_{{\mathcal C}^1}\right] \qquad {\rm in }\; B_{R-\eps}\times(a+\eps,b-\eps)
\]
where $C$ only depends on $A$, $p$ and $d$. 
\end{lemma}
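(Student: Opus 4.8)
The plan is to obtain the smooth approximants $u^n$ by a combination of sup-convolution in $(x,t)$ followed by mollification, which is the standard device (cf. \cite{Is95}, \cite{PLL83}) for turning a viscosity subsolution into a classical one modulo a controlled error. First I would replace $u$ by its sup-convolution $u^\delta(x,t)=\sup_{(y,s)}\big(u(y,s)-\tfrac1{2\delta}(|x-y|^2+|t-s|^2)\big)$; on a slightly smaller cylinder $u^\delta$ is semiconvex, hence twice differentiable a.e., and it is a viscosity subsolution of the same inequality with $f(x,t)$ replaced by $f_\delta(x,t):=\sup_{|x-y|+|t-s|\le C\sqrt\delta} f(y,s)$ (a standard translation-of-touching-points argument, using that $B$ and $f$ are, respectively, $C^1$ and continuous so the coefficients perturb by a modulus). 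Then I would mollify $u^\delta$ in space-time at a scale $\ll\sqrt\delta$ to get smooth $u^n$; by Jensen's inequality applied to the convex parts of the equation, the mollification preserves a subsolution inequality up to another vanishing error, and $u^n\to u$, $f^n:=f_\delta * \rho_n \to f$ uniformly on $B_{R-\eps}\times(a+\eps,b-\eps)$ as $n\to\infty$, $\delta\to0$ appropriately coupled.

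The genuinely new point, and the reason the statement is phrased with ${\rm div}(B\,Du^n)$ rather than ${\rm Tr}(B\,D^2u^n)$, is the rewriting of the second-order term into divergence form. Here I would simply compute
\[
{\rm div}(B(x,t)Du^n) = {\rm Tr}(B(x,t)D^2u^n) + ({\rm div}\,B)(x,t)\cdot Du^n,
\]
where $({\rm div}\,B)_i=\sum_j\partial_{x_j}B_{ij}$, so that $|({\rm div}\,B)\cdot Du^n|\le \|B\|_{{\mathcal C}^1}\,|Du^n|$. Absorbing this linear-in-gradient term into the coercive term $\tfrac1A|Du^n|^p$ by Young's inequality — using $\|B\|_{{\mathcal C}^1}|Du^n|\le \tfrac1{2A}|Du^n|^p + C(A,p)\|B\|_{{\mathcal C}^1}^{p'}$, which is exactly where the exponent $p'$ and the loss from $\tfrac1A$ to $\tfrac1{2A}$ come from — and, if one also wants the drift fully absorbed, treating a possible error of size $\|B\|_{{\mathcal C}^1}|Du^n|$ again by Young, produces the stated right-hand side $f^n+C[\|B\|_{{\mathcal C}^1}^{p'}+\|B\|_{{\mathcal C}^1}]$. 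The $\|B\|_{{\mathcal C}^1}$ term (without power $p'$) is the harmless leftover from bounds that are linear rather than superlinear in $\|B\|_{{\mathcal C}^1}$.

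The main obstacle I expect is not the algebra but the bookkeeping in the regularization: one must check that the sup-convolution step is legitimate up to the boundary shrinkage $\eps$ (so that touching points of $u^\delta$ from above stay in the domain where $u$ is a subsolution, which forces $\delta$ small relative to $\eps^2$ and $\|u\|_\infty$), and that after the divergence-form rewriting the mollification still yields a subsolution of the \emph{divergence-form} inequality — for this it is convenient to mollify the already-divergence-form inequality, noting that $\partial_t$, $D$, and convex functions of $Du$ commute with convolution favorably (Jensen), while the term $\int \rho_n(z)\,{\rm div}_x(B(\cdot-z,t)Du^n(\cdot-z,t))\,dz$ differs from ${\rm div}_x(B(x,t)Du^n)$ only by a term controlled by $\|B\|_{{\mathcal C}^1}\,|Du^n|$ on the support of $\rho_n$, which is again absorbed by Young. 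Everything else — the uniform convergence $u^n\to u$, $f^n\to f$, the smoothness of $u^n$ — is routine.
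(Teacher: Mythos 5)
Your overall strategy matches the paper's: sup-convolution, space--time mollification, Jensen for the $|Du|^p$ term, passage to divergence form, and absorption of drift terms via Young with exponents $p,p'$. You also correctly identify the origin of the $\|B\|_{{\mathcal C}^1}^{p'}$ term and of the loss from $\frac1A$ to $\frac1{2A}$. However, the heart of the paper's proof lies in the estimate of the commutator between $B$ and the mollification, and there your plan has a genuine gap.

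After sup-convolution, $u^\delta$ is merely semiconvex, so $D^2u^\delta$ is a matrix measure and $|Du^\delta|$ is only $O(\delta^{-1/2})$ in $L^\infty$. When you mollify, the discrepancy between $\bigl(\dv(B\,Du^\delta)\bigr)*\phi_\gamma$ and $\dv\bigl(B(x)\,Du^{\delta,\gamma}\bigr)$ produces, after integration by parts, a term of the form $\int D\phi_\gamma(x-y)\bigl[B(y)-B(x)\bigr]Du^\delta(y)\,dy$. You assert this is ``controlled by $\|B\|_{{\mathcal C}^1}|Du^n|$ on the support of $\rho_n$'', but the naive estimate $|D\phi_\gamma|\,|x-y|\lesssim\phi_\gamma$-like kernel gives you $\|B\|_{{\mathcal C}^1}\sup_{B_\gamma(x)}|Du^\delta|$, which blows up as $\delta\to0$ and is \emph{not} $\|B\|_{{\mathcal C}^1}|Du^{\delta,\gamma}|$; indeed, by Jensen the opposite inequality holds between the relevant quantities. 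The paper handles this by (i) choosing a convolution kernel $\phi$ for which $|D\phi|^{p'}/\phi^{p'-1}$ is bounded (e.g.\ $\phi(z)=ce^{-1/(1-|z|^2)}$), so that a H\"older split
$\int|D\phi_\gamma|\,|Du^\delta| = \int\bigl(|D\phi_\gamma|\phi_\gamma^{-1/p}\bigr)\bigl(\phi_\gamma^{1/p}|Du^\delta|\bigr)
\le C\gamma^{-1}\bigl[\int\phi_\gamma|Du^\delta|^p\bigr]^{1/p}$
is available, and (ii) multiplying by the $O(\gamma)$ gain from $|B(y)-B(x)|$ to get exactly
$C\|B\|_{{\mathcal C}^1}\bigl[\int\phi_\gamma|Du^\delta|^p\bigr]^{1/p}$, which is then absorbed by Young into $\frac1{2A}\int\phi_\gamma|Du^\delta|^p$ \emph{before} Jensen is applied. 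Without this kernel choice and the H\"older step, the commutator does not close.

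A second, more minor, omission: your passage from the a.e.\ inequality for the pointwise Hessian $\partial^2u^\delta$ (valid by Alexandrov/superjets) to an inequality that can be tested against $\phi_\gamma$ requires comparing $\partial^2u^\delta$ with the full distributional Hessian $D^2u^\delta$. These differ by a nonnegative singular measure $\nu^\delta$, and one must observe that $\tr(B\,d\nu^\delta)\ge0$ (the paper makes this explicit via the factorization $B=\sigma\sigma^*$) so that replacing $\partial^2u^\delta$ by $dD^2u^\delta$ preserves the subsolution inequality. This is where semiconvexity is actually used, and it should be stated rather than folded into ``Jensen applied to the convex parts,'' since the Hessian term is linear, not convex.
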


\begin{proof} Let us start as usual by regularizing $u$ by supconvolution: let $\delta>0$ small and 
\[
u^\delta(x,t)= \sup_{(y,s)} \left[u(y,s)-\frac{1}{2\delta}|(y,s)-(x,s)|^2\right]\;.
\]
Then it is well-known (see for instance Theorem 3 of \cite{Is95}) that $u^\delta$ satisfies 
\[
u^\delta_t + \frac 1 A |Du^\delta|^p-{\rm Tr}(B(x,t)D^2u^\delta)  \leq    f^\delta(x,t) \qquad {\rm in }\; B_{R-\delta}\times (a+C\delta,b-C\delta)
\]
for some $C=C(\|u\|_\infty)$ and for some $f^\delta$ which converge uniformly to $f$ as $\delta\to 0$. Recall that $u^\delta$ is semiconvex and therefore $Du^\delta$ has bounded variations. We denote by $D^2u^\delta=(D^2_{ij}u^\delta)$ the distributional derivative of $Du^\delta$. From Alexandrov Theorem, singular part $\nu^\delta= (\nu^\delta_{ij})$ of $D^2u^\delta$ is nonnegative while
its regular part   $\partial^2u^\delta=(\partial^2_{ij}u^\delta)$  satisfies $(u^\delta_t,Du^\delta(t,x), \partial^2u^\delta(t,x))\in {\mathcal P}^{2,+}u^\delta(x,t)$ a.e. (where ${\mathcal P}^{2,+}$ is the parabolic superjet of $u^\delta$, see \cite{CIL}). In particular
\begin{equation}\label{eq:subsolae}
u^\delta_t + \frac 1 A |Du^\delta|^p-{\rm Tr}(B(x,t)\partial^2u^\delta)  \leq    f^\delta(x,t) \qquad \mbox{\rm a.e. in }\; B_{R-\delta}\times (a+C\delta,b-C\delta)
\end{equation}

Next we consider a space-time nonnegative convolution kernel $\phi$ with support in $B_1$ and such that $|D \phi|^{p'}/\phi^{p'-1}$ is smooth (one can take for instance $\phi(x,t)= ce^{-1/(1-|(x,t)|^2)}$ if $|(x,t)|<1$ and $\phi(x,t)=0$ otherwise, $c$ being such that $\int \phi=1$).
We set $\phi_\gamma(t,x)= \frac{1}{\gamma^{d+1}}\phi((x,t)/\gamma)$ and denote by $u^{\delta,\gamma}= u^\delta\star \phi_\gamma$ the space-time convolution of $u^\delta$.  Integrating (\ref{eq:subsolae}) one gets
\[
\begin{array}{l}
\ds u^{\delta, \gamma}_t(x,t) + \frac 1 A \int \phi_\gamma(x-y,t-s) |Du^\delta(y,s)|^p\ dyds\\
\qquad \qquad \qquad  -\ds
\int \phi_\gamma(x-y,t-s) {\rm Tr}(B(y,s)\partial^2u^\delta(y,s))\ dyds  \leq    f^{\delta, \gamma}(x,t) \qquad \mbox{\rm in }\; 
B_{R'}\times (a',b')
\end{array}
\]
where $B_{R'}\times (a',b')=B_{R-\delta-\gamma}\times (a+\delta+\gamma,b-\delta-\gamma)$, $f^{\delta, \gamma}=f^{\delta}\star \phi_\gamma$. Now we note that, since $B$ can be written as $B=\sigma\sigma^*$, for some continuous 
square matrix $\sigma$, we have by positivity of $\nu$, 
\[ 
\sum_{i,j} \int \phi_\gamma B_{ij}\ d D^2_{ij}u^\delta= \sum_{i,j} \int (\sigma_i\sqrt{\phi_\gamma})(\sigma_j\sqrt{\phi_\gamma})d \nu_{ij}+
\sum_{i,j} \int \phi_\gamma B_{ij}\partial^2_{ij}u^\delta \geq \int \phi_\gamma {\rm Tr}(B\partial^2u^\delta)\;.
\]
On the other hand, by integration by parts, we get
\[
\sum_{i,j} \int \phi_\gamma B_{ij}d D^2_{ij}u^\delta= -\sum_{i,j} \int \frac{\partial B_{ij}}{\partial x_i}\phi_\gamma \frac{\partial u^\delta}{\partial x_j}
-\frac{\partial \phi_\gamma}{\partial x_i}  B_{ij}\frac{\partial u^\delta}{\partial x_j}
\]
Therefore
\begin{align*}
 -{\rm div}(BDu^{\delta,\gamma})+ \int \phi_\gamma {\rm Tr}(B\partial^2u^\delta)& \\
\qquad \leq 
-\sum_{i,j}  \frac{\partial B_{ij}}{\partial x_i} \int \phi_\gamma \frac{\partial u^\delta}{\partial x_j}
+B_{ij} \int \frac{\partial \phi_\gamma}{\partial x_i} \frac{\partial u^\delta}{\partial x_j}
&
+ \int \left( \frac{\partial B_{ij}}{\partial x_i}\phi_\gamma \frac{\partial u^\delta}{\partial x_j}
-\frac{\partial \phi_\gamma}{\partial x_i}  B_{ij}\frac{\partial u^\delta}{\partial x_j}\right)
\end{align*}
where, from our smoothness assumption on $|D\phi|^{p'}/\phi^{p'-1}= (|D\phi| \phi^{-1/p})^{p'}$,  
\begin{align*}
\left| B_{ij} \int \frac{\partial \phi_\gamma}{\partial x_i} \frac{\partial u^\delta}{\partial x_j}
-\int \frac{\partial \phi_\gamma}{\partial x_i}  B_{ij}\frac{\partial u^\delta}{\partial x_j}\right|
\leq & 
\int |B_{ij}(y,s)-B_{ij}(x,t)| |\frac{\partial \phi_\gamma}{\partial x_i} | \phi_\gamma^{-1/p}
\phi_\gamma^{1/p} |\frac{\partial u^\delta}{\partial x_j}| \\
\leq & C\|B\|_{{\mathcal C}^1} \left[ \int \phi_\gamma |Du^\delta|^{p}\right]^{1/p} 
\end{align*}
while 
\[
\left|\sum_{i,j}  \frac{\partial B_{ij}}{\partial x_i} \int \phi_\gamma \frac{\partial u^\delta}{\partial x_j}
+ \int \frac{\partial B_{ij}}{\partial x_i}\phi_\gamma \frac{\partial u^\delta}{\partial x_j}\right|
\leq C \|B\|_{{\mathcal C}^1} \int \phi_\gamma |Du^\delta|\;.
\]
So
\begin{align*}
-\frac{1}{2A}\int \phi_\gamma |Du^\delta|^p  -{\rm div}(BDu^{\delta,\gamma})+ \int \phi_\gamma {\rm Tr}(B\partial^2u^\delta)
\leq C\left[ \|B\|_{{\mathcal C}^1}^{p'}+\|B\|_{{\mathcal C}^1}\right]\;.
\end{align*}
Putting our estimates together, we get
\[
u^{\delta, \gamma}_t + \frac{1}{2A} \int \phi_\gamma |Du^\delta|^p -{\rm div}(BDu^{\delta,\gamma}) \leq    f^{\delta, \gamma}(x,t) 
+C\left[ \|B\|_{{\mathcal C}^1}^{p'}+\|B\|_{{\mathcal C}^1}\right]
\qquad \mbox{\rm in } B_{R'}\times (a',b')
\]
where 
\[
\frac{1}{2A} \int \phi_\gamma |Du^\delta|^p \geq 
\frac{1}{2A}  |Du^{\delta,\gamma}|^p\;.
\]
Choosing $\delta$  and   $\gamma$ sufficiently small then gives the result. 
\end{proof}

We now have the first step of the improvement of the oscillation. 

\begin{lemma} Let $\theta\in (0,1/4)$. Assume that $u$ is a continuous subsolution to 
\[ u_t +  |D u|^p -  {\rm Tr}(B(x,t) D^2u)  \leq f(x,t) \text{ in } B_{R+1} \times (0,1)\]
with $0\leq u \leq 1$ and $u\leq \theta$ at some point in $B_R\times \{0\}$. If $\eps$, $\|B\|_{{\mathcal C}^1}$ and 
 $\displaystyle{  \|f\|_{L^m(B_{R+1}\times [0,1])}  }$
are sufficiently small (depending on $p$ and dimension but not on $\theta$), then $u\leq 1-\theta$
in $B_{R}\times [1/2,1]$. 
\end{lemma}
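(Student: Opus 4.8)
The plan is to combine the approximation Lemma~\ref{l:approx} with a comparison argument against a value-function supersolution that ``sees'' the point where $u$ is small. After normalising (subtract a constant, rescale $\tilde u(x,t)=u(\rho x,\rho^p t)$ as in Section~\ref{s:scaling}) we may assume $0\le u\le 1$, that the coefficient of $|Du|^p$ is $1$, and that $\|B\|_{C^1}$ and $\|f\|_{L^m}$ are as small as we wish. Applying Lemma~\ref{l:approx} produces, for each $n$, a smooth $v=u^n$ with $v\to u$ locally uniformly and a smooth right-hand side $g=f^n+C[\|B\|_{C^1}^{p'}+\|B\|_{C^1}]$ with $\|g\|_{L^m}$ still small, such that
\[ v_t+\tfrac12|Dv|^p-\dv\!\big(B(x,t)Dv\big)\le g \quad\text{in } B_{R+1-\eps'}\times(\tau_0,1-\tau_0). \]
Since $u$ is continuous with $u(y_0,0)\le\theta$ for some $y_0\in B_R$, we get $v(y_0,\tau_0)\le 2\theta$ for $\tau_0$ small and $n$ large; it then suffices to prove $v\le 1-\theta$ on $B_R\times[1/2,1-\tau_0]$ and pass to the limit $n\to\infty$, $\tau_0,\eps'\to0$. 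The reason for passing to divergence form is precisely the obstruction in the Remark: the diffusion now enters as $\dv(BDv)$, which after one integration by parts is tested against first derivatives only, so the uncontrolled second derivatives of the comparison function never appear.

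For the comparison function I would take the value function of the natural calculus-of-variations problem, pinned at $(y_0,\tau_0)$. With $c_p|\xi|^{p'}$ the Legendre transform of $\tfrac12|q|^p$, set for $t>\tau_0$
\[ U(x,t)=2\theta+\inf\Big\{\int_{\tau_0}^{t}\big(c_p|\dot\gamma|^{p'}+|g(\gamma,r)|\big)dr\;:\;\gamma\in W^{1,p'}([\tau_0,t]),\ \gamma(\tau_0)=y_0,\ \gamma(t)=x\Big\}. \]
By dynamic programming $U$ is a viscosity supersolution of $U_t+\tfrac12|DU|^p\ge -|g|\ge g$; it equals $+\infty$ on $\{t=\tau_0\}\setminus\{y_0\}$ and $2\theta$ at $y_0$, so $U\ge v$ on $B_{R+1-\eps'}\times\{\tau_0\}$; and on the lateral boundary the kinetic cost forces $U\ge 1\ge v$ (enlarging the outer radius if necessary, exactly as in Corollary~\ref{c:semi-LaxOleinik}). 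The heart of the estimate is the upper bound on $B_R\times[1/2,1]$: exploiting the freedom to vary $\gamma$ near $y_0$ one gets $U(x,t)\le 2\theta+C_1R^{p'}+C_2\|g\|_{L^m}$, where the control of the $|g|$-term is the a priori $L^\infty$ bound for this variational problem, which holds precisely because $m>1+d/p$, equivalently $p(m-1)>d$ — the same threshold that appeared in Section~\ref{s:scaling}; see \cite{CDLP,DaPo}. Taking $R,\|B\|_{C^1},\|f\|_{L^m}$ small (independently of $\theta$) makes $C_1R^{p'}+C_2\|g\|_{L^m}\le\tfrac14$, so $U\le 2\theta+\tfrac14\le 1-\theta$ there, using $\theta<1/4$.

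To close, one checks that $U$ is a supersolution of the full divergence-form equation $U_t+\tfrac12|DU|^p-\dv(BDU)\ge g$ and invokes comparison. Here $U$ is an infimum of functions that are convex in $x$, hence semiconcave, so its distributional Hessian splits into an a.e.\ part plus a nonpositive singular measure; since $B\ge0$ the singular part contributes with the good sign to $-\dv(BDU)$, while on the a.e.\ set only the Hessian of the smooth optimal curve enters, which is harmless. (This is exactly where divergence form is used: the analogous term $-\varepsilon\,m^+(D^2U)$ in non-divergence form cannot be controlled — the point of the Remark.) Since $v$ is a classical subsolution, $U$ a supersolution, and $v\le U$ on the parabolic boundary of $B_{R+1-\eps'}\times(\tau_0,1-\tau_0)$, the comparison principle for this degenerate equation with convex Hamiltonian yields $v\le U$, hence $v\le 1-\theta$ on $B_R\times[1/2,1-\tau_0]$; letting $n\to\infty$ and $\tau_0,\eps'\to0$ completes the proof.

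The main obstacle I anticipate is making the last step fully rigorous: reconciling the variational/distributional description of $U$ with a comparison principle for the degenerate operator $\partial_t-\dv(B D\cdot)$ together with the convex first-order term. Natural fixes are to mollify $U$, or to argue directly in the weak formulation by pairing the smooth subsolution $v$ against $U$ and discarding the $B$-terms via $B\ge0$ and semiconcavity of $U$. A second genuinely used ingredient is the sharp bound $U-2\theta\lesssim R^{p'}+\|g\|_{L^m}$: it relies on the superlinear gradient penalty (it would fail for the bare heat equation) and on $p(m-1)>d$, and the bookkeeping must keep all smallness thresholds independent of $\theta$.
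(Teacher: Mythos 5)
Your strategy diverges substantially from the paper's. The paper does \emph{not} construct an explicit supersolution and invoke a comparison principle; instead, after the same reduction via Lemma~\ref{l:approx}, it pairs the smooth divergence-form inequality directly against a moving, shrinking test kernel
$\varphi(z,s)=\xi_\delta(s)^{-d}\phi\bigl(((1-s/t)y+(s/t)x+z)/\xi_\delta(s)\bigr)$ with $\xi_\delta(s)=\delta\min\{s^\beta,(t-s)^\beta\}$, differentiates $\rho(s)=\int\varphi(\cdot,s)u(\cdot,s)$ in $s$, and estimates the resulting three contributions ($|x-y|^{p'}/t^{p'-1}$, the $L^m$-pairing with $g$, and the $B$-error). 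The tube averaging built into $\varphi$ is exactly what makes the $L^m$ pairing finite and what turns the second-order term into a first-order one; the condition $\beta\in(1/p,(1/p')\wedge(m-1)/d)$ encodes $p(m-1)>d$.

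The critical gap in your argument is the semiconcavity claim for $U$. You assert that $U$ is ``an infimum of functions convex in $x$'' and hence semiconcave, so that the singular part of $D^2U$ has a good sign against $B\ge0$. That would be true for the kinetic term $\int c_p|\dot\gamma|^{p'}$ alone, but the running-cost term $\int_{\tau_0}^t|g(\gamma(r),r)|\,dr$, viewed as a function of the endpoint $x$ for a fixed ``shape'' of $\gamma$, composes $g$ with an affine map in $x$; since $g$ is merely an $L^m$ function with no convexity or regularity, this composition has no one-sided curvature bound whatsoever, and $U$ need not be semiconcave. This is precisely the obstruction the paper's own Remark identifies (``it seems very difficult to find an estimate on the second derivative of such function $U(x,t)$''): the value-function formula works in the inviscid case $B\equiv0$, but fails to yield a usable second-order supersolution once diffusion is present. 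Your passage to divergence form does not rescue this, because $-\dv(BDU)=-\tr(BD^2U)-(\dv B)\cdot DU$ still sees $D^2U$, and without a one-sided Hessian bound on $U$ the comparison step is not justified.

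A secondary issue you flag yourself is the $L^\infty$ bound $U-2\theta\lesssim R^{p'}+\|g\|_{L^m}$. As written, $U$ involves $\int|g(\gamma(r),r)|\,dr$ along a single curve, which can be $+\infty$ for a fixed curve when $g\in L^m$; making the infimum finite and quantitatively small requires averaging over a tube of curves (that is where $m>1+d/p$ truly enters). In the paper this averaging is not a side remark but the mechanism of the whole proof — it is encoded in $\varphi$ — so the bound and the second-order control are obtained in one stroke rather than as two separate difficulties. In short, your plan reproduces the route the authors describe and then discard in the Remark, and the proposed fix (semiconcavity of the value function plus divergence form) does not hold.
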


\begin{proof}  Using Lemma \ref{l:approx} it is enough to prove the result for a smooth map $u$ which satisfies 
 \[
u_t + \frac{1}{2A} |Du|^p-{\rm div}(B(x,t)Du)  \leq    f(x,t)+ C\left[ \|B\|_{{\mathcal C}^1}^{p'}+\|B\|_{{\mathcal C}^1}\right] \qquad {\rm in }\; B_{1+R}\times (0,1)
\]
We set $g=f+C \left[ \|B\|_{{\mathcal C}^1}^{p'}+\|B\|_{{\mathcal C}^1}\right]$.  

Let $y\in B_R$ be such that $u(y,0)\leq \theta$ and let us fix $(x,t)\in B_R\times [1/2,1]$. Let also $\phi=\phi(z)$ be a smooth, nonnegative convolution kernel, with support in the unit ball. We assume as before that $|D \phi|^{p'}/\phi^{p'-1}$ is smooth. Since $p>2$ and $d< p(m-1)$, we can fix some $\beta\in (1/p, (1/p')\wedge (m-1)/d)$. For $\delta>0$ to be chosen later we set 
\[
\varphi(z,t)= \frac{1}{(\xi_\delta(s))^d} \phi\left(\frac{(1-s/t)y+(s/t)x +z}{\xi_\delta(s)}\right)\qquad {\rm where}\qquad
\xi_\delta(s)= \delta \min\{s^\beta, (t-s)^\beta\}. 
\]
Then we consider 
$$
\rho(s)= \int_{\R^d} \varphi(z,s)u(z,s)dz= \int_{\R^d} \phi(z) u\left((1-s/t)y+(s/t)x +\xi_\delta(s)z,s\right)dz \;.
$$
We note that $\rho$ is absolutely continuous, $\rho(0)=u(y,0)$ and $\rho(t)= u(x,t)$. Then
$$
\rho_t(s)=  \int_{\R^d} \phi (u_t+ Du. (\frac{x-y}{t}+ \xi_\delta'z))dz
$$
where in the above equality and the (in)equalities below, $\phi=\phi(z)$ and $u$ (as well as $B$ and $g$) and its derivative have for argument 
$((1-s/t)y+(s/t)x +\xi_\delta(s)z,s)$. Hence
$$
\rho_t(s)=  \int_{\R^d} \phi (-|D u|^p + g+ Du. (\frac{x-y}{t}+ \xi_\delta'z)) -\frac{(B^*D \phi).D u}{\xi_\delta} 
$$
where
$$
\phi (-|D u|^p +  Du. (\frac{x-y}{t}+ \xi_\delta'z)) -\frac{(B^*D \phi).D u}{\xi_\delta} \leq 
C( \phi (\frac{|x-y|^{p'}}{t^{p'}}+  |\xi_\delta'|^{p'}|z|^{p'}+ \|B\|_\infty^{p'}\frac{|D \phi|^{p'}}{\phi^{p'-1}}|\xi_\delta|^{-p'})
$$
So, if we set $\eps= \|B\|_\infty$, 
$$
u(x,t)\leq u(y,0) +C|x-y|^{p'}t^{1-p'}+ \int_0^t\int_{\R^d} \phi g + C \int_0^t |\xi_\delta'|^{p'}+ \eps^{p'}|\xi_\delta|^{-p'}\;.
$$
Let us estimate the right-hand side:
$$
\begin{array}{rl}
\int_0^t\int_{\R^d} \phi g = \int_0^t\int_{\R^d} \varphi(z,s) g(z,s)dzds \; \leq &
 \|g\|_m \left(\int_0^t \int_{B_1} \xi_\delta^{d(1-m')}\phi^{m'}\right)^{1/m'}\\
 \leq &  C  \|g\|_m \left(2\int_0^{t/2} \delta^{d(1-m')} s^{\beta d(1-m')}ds \right)^{1/m'}\; \leq \;  
  C \|g\|_m\delta^{-d/m}
  \end{array}
$$
(where $1/m+1/m'=1$, since $\beta d(1-m')>-1$ because $\beta<(m-1)/d$) and
$$
\int_0^t |\xi_\delta'|^{p'}+ \eps^{p'}|\xi_\delta|^{-p'} = 2\int_0^{t/2} \delta^{p'} \beta^{p'} s^{(\beta-1) p'} +\eps^{p'}\delta^{-p'} s^{-\beta p'}\ ds
\leq C(\delta^{p'}+ \eps^{p'}\delta^{-p'})
$$
(since $(\beta-1) p'>-1$ because $\beta>1/p$ and $\beta<1/p'$). 
Therefore, assuming that $\|B\|_\infty<\eps_0$ and $\|g\|_m<\eps_0$ for some $\eps_0\in(0,1)$ to be fixed below, and
 choosing $\delta= \min\{\eps_0^{1/2}, \eps_0^{1/(p'+d/m)}\}$ one gets
 $$
 u(x,t)\leq u(y,0) +CR^{p'}(1/2)^{1-p'}+ C\left[ \eps_0^{p'/2}+ \eps_0^{p'/(p'+d/m)}\right]
$$
We can now fix $R$ and $\eps_0$ sufficiently small so that the remaining term is less than $1/2$: in this case
$$
u(x,t)\leq \theta+1/2\leq 1-\theta\;.
$$
\end{proof}

Since the control of supersolutions remains unchanged, one can conclude exactly as in the case of bounded right-hand side that the improvement of oscillations holds:
\begin{cor} \label{c:oscillation-unbd}
Assume that in $B_1 \times [-1,0]$ the function $u$ satisfies the two inequalities
\begin{align}
u_t + A|Du|^p -\eps m^-(X) \geq & \; -\eps \label{eq:ineq1f}\\
u_t + \frac 1 A |Du|^p-{\rm Tr}(B(x,t)D^2u)  \leq   &\; f(x,t) \label{eq:ineq2f}
\end{align}
Assume also $\osc_{B_1 \times [-1,0]} u \leq 1$. There is a $\lambda>0$ (depending only on $A$, $p$ and dimension), such that if $\eps$ is small enough and  if
$\|B\|_{{\mathcal C}^1}<\eps$, $\|g\|_{m,R+1}<\eps$, then $\osc_{B_\lambda \times [-\lambda^{p/2},0]} u \leq 1$ is less or equal to $1-\theta$.
\end{cor}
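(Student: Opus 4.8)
\emph{Strategy.} The plan is to repeat the proof of Lemma~\ref{l:improvement-of-oscillation} and Corollary~\ref{c:oscillation-2ndorder}, with the ``subsolution case'' (formerly Lemma~\ref{l:upperhalf-improv}) replaced by the Lemma just proved above — the genuinely new ingredient, relying on Lemma~\ref{l:approx} and a convolution-along-characteristics estimate that digests an $L^m$ source — while the ``supersolution case'' is kept verbatim. Indeed \eqref{eq:ineq1f} reads $u_t + A|Du|^p - \eps m^-(D^2u) + \eps \ge 0$, which is precisely the hypothesis of Lemmas~\ref{l:subsolution} and \ref{l:lowerhalf-improv}, so those two apply unchanged once $\eps$ is small; and the coefficient $1/A$ in \eqref{eq:ineq2f} only enters the constants of the Lemma above, whose proof is otherwise insensitive to it. The one structural novelty compared with Section~\ref{sec:order2} is that the new subsolution estimate lives on a cylinder $B_{R+1}\times(0,1)$, so I first rescale to fit this cylinder inside $B_1\times[-1,0]$.

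\emph{Step 1: rescaling.} Fix the constant $R$ furnished by the Lemma above (depending only on $p,A,d$ and not on $\theta$), set $\nu = 1/(R+1)\in(0,1)$, and replace $u$ by $\tilde u(x,t)=u(\nu x,\nu^p(t-1))$, well defined on $B_{R+1}\times(0,1]$ because $(\nu x,\nu^p(t-1))$ then ranges over $B_1\times(-\nu^p,0]\subseteq B_1\times(-1,0]$. By the scaling rules of Section~\ref{s:scaling} (with $a=\nu$, $b=\nu^p$, $c=1$), the coefficient $A$ in front of $|D\tilde u|^p$ is preserved; the coefficient of $m^-$ is multiplied by $\nu^{p-2}<1$ (using $p>2$); the matrix $B$ is replaced by $\nu^{p-2}B(\nu\,\cdot,\nu^p\,\cdot)$, of $\mathcal C^1$ norm $\le\nu^{p-2}\|B\|_{\mathcal C^1}$; and $f$ is replaced by $\nu^p f(\nu\,\cdot,\nu^p\,\cdot)$, of $L^m$ norm $\nu^{(p(m-1)-d)/m}\|f\|_m<\|f\|_m$ (using $p(m-1)>d$). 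Hence the smallness hypotheses $\eps\le\eps_0$, $\|B\|_{\mathcal C^1}<\eps$, $\|g\|_m<\eps$ persist for $\tilde u$. Subtracting $\min_{B_{R+1}\times[0,1]}\tilde u$ affects neither inequality (they see only derivatives of $\tilde u$) nor the oscillation, so we may also assume $0\le\tilde u\le1$.

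\emph{Step 2: dichotomy.} With $R$ fixed, let $\theta\in(0,1/4)$ be the constant of Lemma~\ref{l:subsolution} (depending on $p,A,R,d$), and then pick $\eps_0>0$ small enough that the Lemma above and Lemma~\ref{l:lowerhalf-improv} both apply. If $\tilde u(y_0,0)\le\theta$ for some $y_0\in B_R$, the Lemma above yields $\tilde u\le1-\theta$ on $B_R\times[1/2,1]$, hence $\osc_{B_R\times[1/2,1]}\tilde u\le1-\theta$ (as $\tilde u\ge0$). Otherwise $\tilde u\ge\theta$ on all of $B_R\times\{0\}$, and Lemma~\ref{l:lowerhalf-improv} yields $\tilde u\ge\theta/2$ on $B_{R/2}\times[1/2,1]$, hence $\osc_{B_{R/2}\times[1/2,1]}\tilde u\le1-\theta/2$ (as $\tilde u\le1$). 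In both cases $\osc_{B_{R/2}\times[1/2,1]}\tilde u\le1-\theta/2$.

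\emph{Step 3: conclusion, and the main difficulty.} Undoing Step 1, this last bound reads $\osc u\le1-\theta/2$ on $B_{\nu R/2}\times[-\nu^p/2,0]$; since $\nu,R$ depend only on $A,p,d$, taking $\lambda=\min\{\nu R/2,(\nu^p/2)^{2/p}\}$ gives $B_\lambda\times[-\lambda^{p/2},0]\subseteq B_{\nu R/2}\times[-\nu^p/2,0]$, and relabelling $\theta/2$ as $\theta$ finishes the proof. Granting the two halves of the dichotomy, there is no hard step here; the only point needing real care is the bookkeeping of Step 1 — namely that the single rescaling $\nu=1/(R+1)$ both manufactures the spatial margin required by the convolution-along-characteristics argument of the Lemma above and leaves every smallness hypothesis intact. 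This works precisely because, under an $L^\infty$ parabolic zoom, $A$ is scale-invariant while the diffusion strength and the $L^m$ norm of the right-hand side strictly decrease, exactly the roles of the hypotheses $p>2$ and $p(m-1)>d$.
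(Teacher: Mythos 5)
Your proof is correct and follows essentially the approach the paper intends: combine the new subsolution estimate with the supersolution Lemmas \ref{l:subsolution} and \ref{l:lowerhalf-improv} via the same dichotomy as in Lemma \ref{l:improvement-of-oscillation} and Corollary \ref{c:oscillation-2ndorder}. The preliminary zoom $\nu=1/(R+1)$ is a bookkeeping detail the paper leaves implicit (the unbounded-case subsolution lemma lives on $B_{R+1}\times(0,1)$, which does not fit inside $B_1\times[-1,0]$ without rescaling), and you correctly verify that this scaling preserves all smallness hypotheses using $p>2$ and $p(m-1)>d$.
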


The proof of Theorem \ref{t:mainUnbd} is identical to the proof of Theorem \ref{t:main1} or Theorem \ref{t:main2} but using Corollary \ref{c:oscillation-unbd} instead of Corollary \ref{c:oscillation-1storder} or \ref{c:oscillation-2ndorder}, and verifying that the assumptions of Corollary \ref{c:oscillation-unbd} hold at small scales. This is the point where the assumption $m > d/p+1$ plays a role.

\begin{proof}[Proof of Theorem \ref{t:mainUnbd}.] 
Without loss of generality, we prove the estimate for $(x,t) = (0,0)$ and $(y,s)$ sufficiently close to $(0,0)$. Also without loss of generality we assume that $0 \leq u \leq 1$.

We start by scaling the equation so that the zeroth order term and the diffusion term are negligible. We consider $\tilde u(x,t) = u(\rho x, \rho^p t)$ which solves
\begin{align*}
\tilde u_t + A|D \tilde u|^p  - \rho^{p-2} A m^-(D^2 u) &\geq -\rho^p A, \\ 
\tilde u_t + \frac 1A |D \tilde u|^p - \rho^{p-2} \tr(B(\rho x,\rho^p t) D^2 u) &\leq \rho^p f(\rho x,\rho^p t),
\end{align*}
We choose $\rho$ small enough so that $\rho^{p-2} A < \eps$, for the constant $\eps$ of Corollary \ref{c:oscillation-unbd} and also
\begin{align*}
\|\rho^p f(\rho \cdot,\rho^p \cdot)\|_{L^m(B_1\times [-1,0])}\leq \rho^{p(1-1/m)-d/m}\|f\|_{L^m(B_1\times [-1,0])} &\leq \eps \qquad \text{and} \\
\| \rho^{p-2}  B(\rho \cdot ,\rho \cdot)\|_{{\mathcal C}^1} &\leq \eps.
\end{align*}

Note that $|\grad_x \rho^{p-2} B(\rho \cdot ,\rho \cdot)| = \rho^{p-1} |\grad_x B|$.

We now show that $\tilde u$ satisfies the iterative hypothesis of Proposition \ref{p:scaleiteration}. Let
$Q_r=B_r(0)\times[-r^{p-\alpha(p-1)}, 0]$ and suppose that $\osc_{Q_r(x,t)} \tilde u \leq r^\alpha$, where $\alpha$ is a small positive constant that will be chosen below. Let $\tilde u^r(x,t) = r^{-\alpha} \tilde u^r(rx,r^{p-\alpha(p-1)})$. The function $\tilde u^r$ satisfies $\osc_{Q_1} \tilde u^r \leq 1$ and the inequalities
\begin{align*}
\tilde u^r_t + A|D \tilde u^r|^p - (r\rho)^{p-2} A m^-(D^2 \tilde u^r) &\geq -r^{p(1-\alpha)} \rho^p A, \\ 
\tilde u^r_t + \frac 1A |D \tilde u^r|^p - (r\rho)^{p-2} \tr(B(r\rho x,(r\rho)^p t) D^2 u) &\leq r^{p(1-\alpha)} \rho^p f(r\rho x,(r\rho)^p t) =: g_r(x,t).
\end{align*}
Note that
\[ ||g_r||_{L^m} = ||r^{p(1-\alpha)} \rho^p f(r\rho \cdot,(r\rho)^p t)||_{L^m} \leq r^{p(1-\alpha-1/m)-d/m} \rho^{p(1-1/m) - d/m} ||f||_{L^m}. \]
Therefore $||g_r||_{L^m}$ stays bounded for $r<1$ as long as $\alpha$ is chosen small enough so that $p(1-\alpha-1/m)-d/m \geq 0$, which is possible since by assumption $p(1-1/m)-d/m > 0$.

Thus, we can apply Corollary \ref{c:oscillation-unbd} and obtain $\osc_{B_\lambda \times [-\lambda^{p/2},0]} \tilde u^r \leq 1-\theta$. Let us choose $\alpha$ also smaller than $1/2$ and such that $\lambda^\alpha \geq 1-\theta$. Therefore we have
\[ \osc_{\lambda r} \tilde u = r^\alpha \osc_{B_\lambda \times [-\lambda^{p-\alpha(p-1)},0]} \tilde u^r \leq  r^\alpha \osc_{B_\lambda \times [-\lambda^{p/2},0]} \tilde u^r \leq (1-\theta) r^\alpha \leq (\lambda r)^\alpha. \]
And thus, we proceed with Proposition \ref{p:scaleiteration}. 
\end{proof}

\bibliographystyle{plain}

\end{document}